\documentclass[11pt,reqno]{amsart}
\usepackage[margin=1in]{geometry} 
\usepackage{amsmath, amsthm, mathtools, amssymb, enumitem}
\usepackage{stmaryrd}
\usepackage{mathrsfs}
\usepackage{aligned-overset}

\usepackage[
    backend=biber,
    style=alphabetic,
    sorting=anyt,
    maxalphanames=99,
    maxbibnames=99,
    backref=true
]{biblatex}
\DeclareFieldFormat[article]{pages}{#1}
\renewbibmacro{in:}{}
\AtEveryBibitem{\clearfield{month}}
\AtEveryBibitem{\clearfield{day}}

\DefineBibliographyStrings{english}{
    backrefpage = {cited on page},
    backrefpages = {cited on pages},
}

\usepackage[x11names]{xcolor}
\usepackage{orcidlink}

\usepackage{subcaption}

\usepackage{graphicx}
\graphicspath{{./figures/}}
\usepackage{hyperref}
\hypersetup{
    colorlinks=true,
    urlcolor=blue,
    linkcolor=blue,
    citecolor=SpringGreen3
}

\usepackage{tabto}

\makeatletter
\newcommand{\boxup}{\mathbin{\mathpalette\boxup@{}}}

\newcommand{\boxup@}[2]{%
  \setbox0=\hbox{$#1\boxplus$}%
  \rlap{%
        \raisebox{-\dp0 + 0.04\ht0}{%
        \hspace{0.07\wd0}%
        \color{gray!70}%
        \rule{0.42\wd0}{0.47\ht0}%
      }%
    }%
  \box0%
}
\makeatother

\usepackage{xspace}

\renewcommand{\geq}{\geqslant}
\renewcommand{\ge}{\geqslant}
\renewcommand{\leq}{\leqslant}
\renewcommand{\le}{\leqslant}

\newcommand{\tal}{\widetilde{\alpha}}
\newcommand{\tell}{\widetilde{\ell}}
\newcommand{\Easy}{\mathsf{Easy}}
\newcommand{\Dep}{\mathsf{Dep}}
\newcommand{\tomega}{\widetilde{\omega}}
\newcommand{\tsigma}{\widetilde{\sigma}}

\DeclareMathOperator{\Proj}{\mathsf{Proj}}
\DeclareMathOperator{\Size}{\mathsf{Size}}
\newcommand{\Remove}{\mathsf{Remove}}

\newcommand{\Seg}{\mathsf{Seg}}
\newcommand{\Bad}{\textsf{Bad}\xspace}
\newcommand{\Major}{\textsf{Major}\xspace}

\newcommand{\dist}{\mathrm{d}_1}
\newcommand{\cen}{\mathsf{cen}}
\newcommand{\bl}{\mathsf{bl}}
\newcommand{\tr}{\mathsf{tr}}

\newcommand{\Zdir}{\overrightarrow{\mathbb{Z}}}

\newcommand{\BB}{\mathcal{B}}
\newcommand{\BBlift}{\widetilde{\BB}}
\newcommand{\Path}{P}
\newcommand{\Pathlift}{\widetilde{\Path}}
\newcommand{\Paths}[1]{\mathcal{P}^{#1, M}}

\newcommand{\kbox}[1]{[0,8^{-#1})^d}
\newcommand{\kboxlift}[1]{[0,8^{-#1})^d \times [0,10^{-#1})}

\newcommand{\dirpc}{p_{c}^{\rightarrow}}

\newcommand{\al}{\alpha}

\newcommand{\0}{\mathbf{0}}
\newcommand{\ee}{\mathbf{e}}
\newcommand{\qq}{\mathfrak{q}}

\renewcommand{\th}{\textsuperscript{th}}

\newcommand{\norm}[1]{\left\lVert#1\right\rVert}

\DeclarePairedDelimiter\ceil{\lceil}{\rceil}
\DeclarePairedDelimiter\floor{\lfloor}{\rfloor}

\newcommand{\lb}{\llbracket}
\newcommand{\rb}{\rrbracket}

\newcommand{\wt}{\widetilde}

\newcommand{\R}{\mathbb{R}}  
\newcommand{\Z}{\mathbb{Z}}
\newcommand{\Zpos}{\mathbb{Z}_{\ge0}}

\newcommand{\e}{\varepsilon}
\renewcommand{\d}{\delta}

\newcommand{\ls}{\lesssim}
\newcommand{\gs}{\gtrsim}

\newcommand{\E}{\mathbb{E}}
\renewcommand{\P}{\mathbb{P}}

\newcommand{\1}{\mathbf{1}}

\newcommand{\msf}{\mathsf}

\newcommand{\mrm}{\mathrm}

\newcommand{\cA}{\mathcal{A}}
\newcommand{\cB}{\mathcal{B}}

\newcommand{\cG}{\mathcal{G}}
\newcommand{\cH}{\mathcal{H}}

\newcommand{\cL}{\mathcal{L}}

\newcommand{\cN}{\mathcal{N}}

\newcommand{\cP}{\mathcal{P}}

\newcommand{\cS}{\mathcal{S}}
\newcommand{\cT}{\mathcal{T}}

\newcommand{\sB}{\mathsf{B}}
\newcommand{\sC}{\mathsf{C}}

\newcommand{\sL}{\mathsf{L}}

\newcommand{\sa}{\mathsf{a}}
\newcommand{\sfb}{\mathsf{b}}

\newcommand{\sm}{\mathsf{m}}

\newcommand{\su}{\mathsf{u}}
\newcommand{\sv}{\mathsf{v}}

\newcommand{\sx}{\mathsf{x}}
\newcommand{\sy}{\mathsf{y}}

\newcommand{\bP}{\mathbb{P}}

\newcommand{\bR}{\mathbb{R}}
\newcommand{\bS}{\mathbb{S}}

\newcommand{\bZ}{\mathbb{Z}}

\newcommand{\bfE}{\mathbf{E}}

\newcommand{\bfP}{\mathbf{P}}

\newcommand{\br}{\mathfrak{b}}

\DeclareRobustCommand{\SkipTocEntry}[5]{} 

\usepackage[capitalize]{cleveref}
\crefname{maintheorem}{Theorem}{Theorems}
\crefname{figure}{Figure}{Figures}

\numberwithin{equation}{section}

\theoremstyle{plain}
\newtheorem{theorem}{Theorem}[section]

\newtheorem{proposition}[theorem]{Proposition}

\newtheorem{lemma}[theorem]{Lemma}

\theoremstyle{remark}
\newtheorem{definition}[theorem]{Definition}
\newtheorem{remark}[theorem]{Remark}

\title{Directed fractal percolation and non-Lipschitz variants}

\author{Shirshendu Ganguly \and Victor Ginsburg \and Kaihao Jing}

\address{Shirshendu Ganguly\\
Department of Statistics,
University of California, Berkeley, CA, USA} 
\email{sganguly@berkeley.edu}

\address{Victor Ginsburg \orcidlink{0000-0001-9399-6748}\\
Department of Mathematics, University of California, Berkeley, CA, USA}
\email{victor@math.berkeley.edu}

\address{Kaihao Jing\\
Department of Statistics, University of California, Berkeley, CA, USA}
\email{khjing@berkeley.edu}

\begin{document}

\begin{abstract}
While models of undirected and directed geometry in random i.i.d. disorder with rapidly decaying tails are expected to behave similarly and exhibit features of the Kardar--Parisi--Zhang universality class, the scenario when the background noise is fractal is expected to be significantly different. A canonical example of the latter is
Liouville Quantum Gravity, where the planar Gaussian free field and its thick points form the fractal background. A toy model capturing some of the essential features is given by 
\emph{fractal percolation}, a random Cantor set introduced by Mandelbrot \cite{ManIntermittentTurbulenceSelfsimilar1974} to study intermittency in fluids. In the simplest setting, dyadic cubes of different scales are retained independently with probability $p$ (a parameter of the model), and the intersection of all retained cubes forms the set of open sites.
Connectivity properties of such sets have been intensely studied. 
Across \cite{CCDConnectivityPropertiesMandelbrots1988,ChaAbsenceDirectedFractal1995}, a surprising result was proven. 
Namely, while there is a phase transition for connectivity of the fractal percolation set (like in usual bond percolation),  directed percolation never occurs.
In a similar spirit, in geometric measure theory, results have been obtained on sizes of intersections of the fractal percolation set with graphs of Lipschitz functions \cite{BJJ+FractalPercolationUnrectifiable2021,SSSetsWhichAre2015,SSLargestSliceFractal2025}.
As a step towards understanding last passage percolation (LPP) driven by the Gaussian free field (initiated in \cite{GGNLastPassagePercolation2026}), we study the problem of quantifying LPP in fractal percolation.
Developing a multi-scale framework, we show that a directed path of length $n$ cannot pass through more than $n e^{-\log n / \log\log n}$ open sites, and thus establish a polynomial gap from linear growth of the passage time (up to the spurious $\log\log n$ factor).
The obstruction to high LPP values is the Lipschitz nature of directed paths.
Along these lines, 
\cite{ChaLengthShortestCrossing1996} showed that
any connected subset of fractal percolation must have Hausdorff dimension strictly larger than one. In our second main result, we show that one can construct directed paths passing through open sites in fractal percolation provided they move exponentially fast in the spatial direction---a model we introduce and term as \emph{unrectifiable directed percolation.}
\end{abstract}

\maketitle

\setcounter{tocdepth}{2}
\tableofcontents

\begin{figure}[ht]
    \centering
    \begin{subfigure}[t]{0.4\textwidth}
        \includegraphics[width=\linewidth]{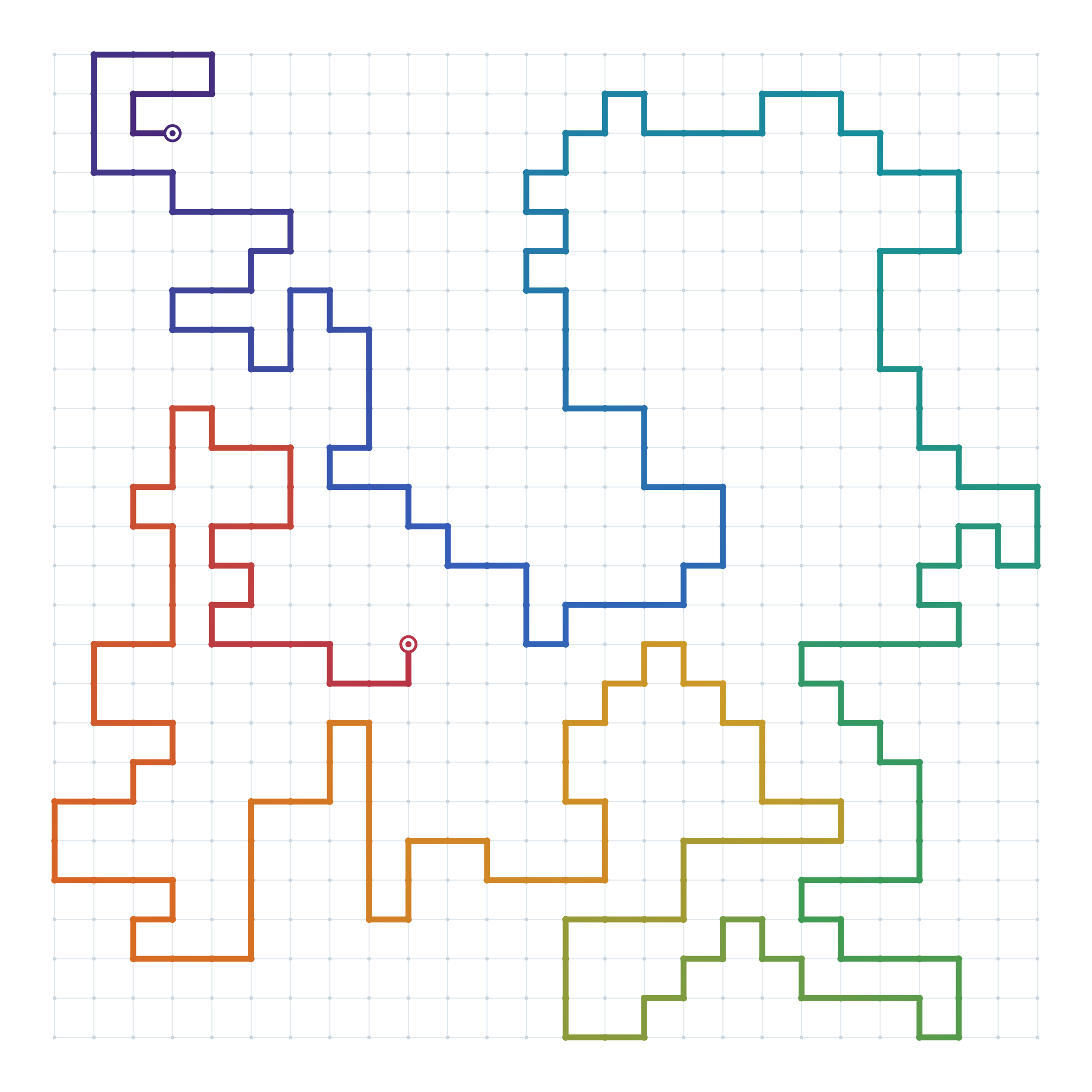}
        \caption{A tortuous path in $\Z^2$.}
    \end{subfigure}
    \hfill
    \begin{subfigure}[t]{0.5\textwidth}
        \includegraphics[width=\linewidth,height=0.8\linewidth]{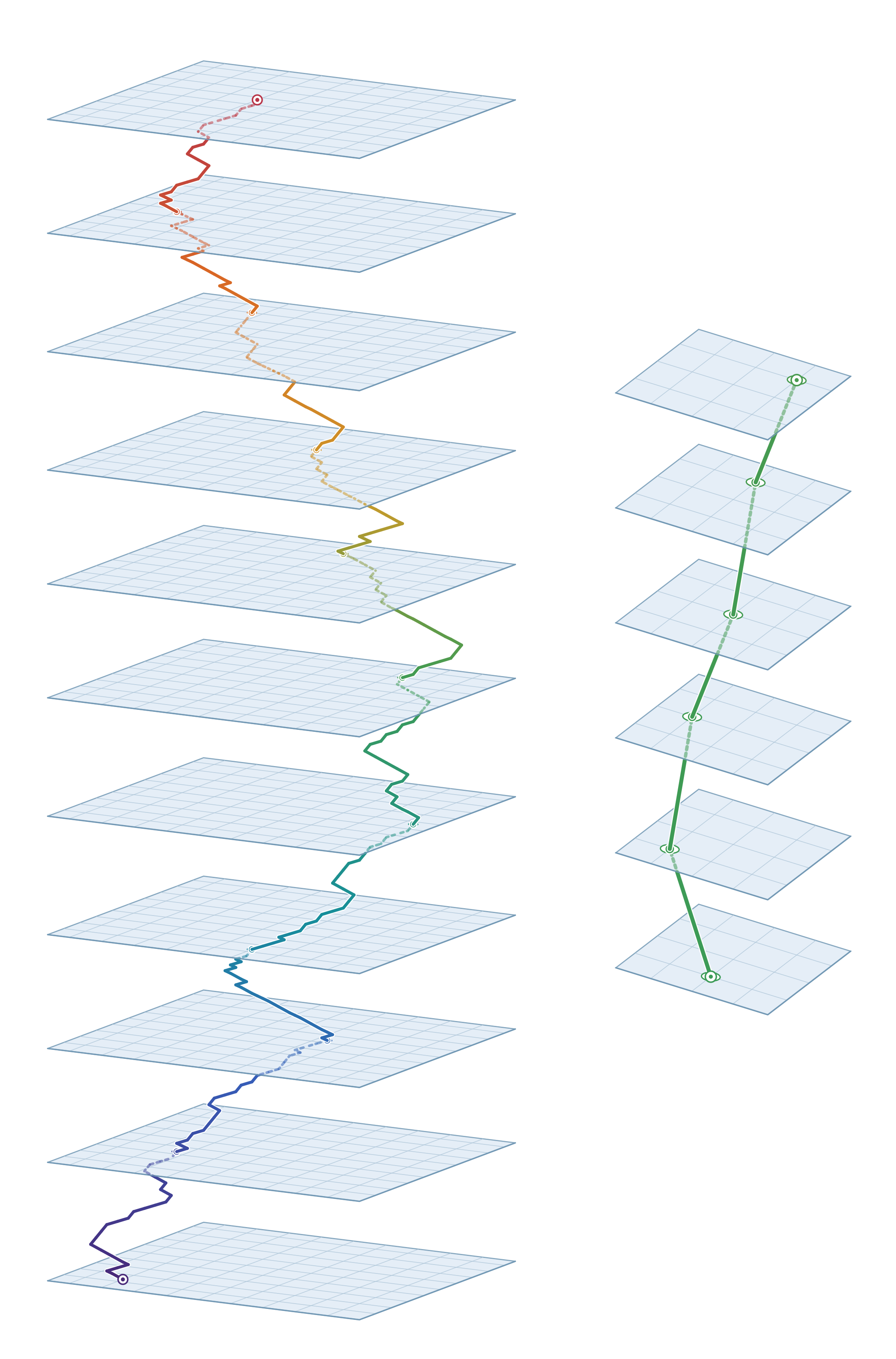}
        \caption{The path is ``lifted'' to $\Z^{2}\times \Z$, where the latter has directed edges $(x,t) \to (y,t+1)$ for $\norm{x-y}_1 = 1$.
        Orthogonal projection onto $\Z^2\times\{0\}$ coordinate recovers the 2D path.
        The left depicts the large-scale trajectory; the right depicts the lattice structure.}
    \end{subfigure}
  \caption{}
  \label{fig:lattice-lift}
\end{figure}

\section{Introduction}\label{s:intro}

Geometry driven by random disorder is a central topic of interest with far-reaching connections to a wide variety of models ranging from first/last passage percolation to quantum field theory.
A particularly intriguing class of questions arise when the background randomness exhibits fractal behavior.
An important classical question of a related flavor, albeit without external randomness, involves constructing diffusions such as Brownian motion on fractal sets such as the Sierpinski gasket, see e.g. \cite{KusDiffusionProcessFractal1987,GolRandomWalksDiffusions1987,BPBrownianMotionSierpinski1988}.
A natural random analogue of the Sierpinski gasket is \emph{fractal percolation}, a random Cantor set introduced by Mandelbrot \cite{ManIntermittentTurbulenceSelfsimilar1974} to study intermittency in fluids.
(see the surveys \cite{ChaAspectsFractalPercolation1995,RSGeometryFractalPercolation2013,SVFractalPercolations2018,KTRecentProgressFractal2025} and the many references therein for more background).
A related but much richer object is the two-dimensional Gaussian free field (GFF), whose level sets comprise a continuum of fractal percolation-like objects.
The GFF is the key building block for \emph{Liouville quantum gravity}, a canonical model of random planar geometry,
given informally by the random Riemmanian metric
\[
    e^{\gamma h(z)}\,\mathrm{d}z,
    \qquad z\in\R^2,
\]
where $\gamma>0$ and $h$ is the GFF.
For background on Liouville quantum gravity, see  \cite{GwyRandomSurfacesLiouville2020,DDGIntroductionLiouvilleQuantum2023,BPGaussianFreeField2025}.
The Brownian motion on such a random Riemannian surface is called \emph{Liouville Brownian motion}.
A characteristic feature of such diffusions is that they exhibit significant \emph{trapping}, getting stuck due to the inhomogeneity of the background, in stark contrast to Brownian motion in Euclidean space.
This naturally leads to the broad question of quantifying the disparity between Euclidean geometry and the geometry induced by such random fractal backgrounds.
This is the central focus of this article.
The model of choice is the aforementioned fractal percolation, a random Cantor set which is arguably the simplest model for a random fractal.
We next define this precisely to facilitate the forthcoming discussion and the statements of our main results. 

\subsection{Fractal percolation}\label{s:intro-fractal-percolation}

Fixing a dimension $d\ge 2$,
the model has two further parameters, $p\in(0,1)$ and an integer $\br\ge 2$.
Given these, we define a random subset of $\R^d$ as follows. 
For each non-negative integer $k\in\Zpos$, each box of the form
\begin{align}\label{e:k-boxes}
    x+[0,\br^{-k}]^d, \qquad x\in \br^{-k}\Z^d
\end{align}
is independently \emph{retained} with probability $p$, and \emph{deleted} with probability $1-p$.
We refer to boxes of the above form as \emph{$\br^{-k}$-boxes}.
Let $A^{(k)}$ be the union of all retained  $\br^{-k}$-boxes.
The fractal percolation configuration $A_\infty$ consists of those points that are retained at \emph{every} scale:
\begin{align}\label{e:fractal-perc-definition}
    A_\infty :=\bigcap_{k=0}^\infty A^{(k)}.
\end{align}
We denote the law of $A_\infty$ by $\P_p$.
To emphasize the dependence on the ambient dimension $d$ and the branching rate $\br$, we will sometimes write $A_\infty(d,\br)$.
We also write $A_{n}=A_{n}(d,\br):=\bigcap_{k=0}^n A^{(k)}$ for the set of points retained up to scale $n$.

It is well known that $A_\infty$ exhibits many phase transitions as $p$ ranges from $0$ to $1$ (for instance, see the surveys \cite{ChaAspectsFractalPercolation1995,RSGeometryFractalPercolation2013,KTRecentProgressFractal2025}).
For a simple example, let $S_n$ be the number of $\br^{-n}$-boxes in $[0,1]^d$ that are retained up to scale $n$ (equivalently, $S_n$ is the number of $\br^{-n}$-boxes covering $A_n\cap[0,1]^d$).
Then $\{S_n\}_{n\ge 0}$ forms a branching process, with
\begin{align}\label{e:firstmoment}
    \E_p[S_n] = \br^{dn}p^n.
\end{align}
Thus $\{S_n\}_{n\ge 0}$ is subcritical for $p<\br^{-d}$, critical for $p=\br^{-d}$, and supercritical for $p>\br^{-d}$.
It follows by translation invariance that $A_\infty$ is a.s. empty for $p\le \br^{-d}$, 
and a.s. nonempty for $p>\br^{-d}$.

The Hausdorff dimension $\dim A_\infty$ of the fractal percolation configuration is easy to guess heuristically from the first moment calculation \eqref{e:firstmoment}.
Namely, when $p>\br^{-d}$, we have
\begin{align}\label{e:dimension}
    \dim A_\infty = d + \log_\br p\qquad\text{almost surely.}
\end{align}
This formula was predicted by \cite{ManFractalGeometryNature1986} and proven by \cite{CCDConnectivityPropertiesMandelbrots1988}.
From now on we slightly abuse notation and denote $\dim A_\infty := \max\{d+\log_\br p, 0\}$.
Note that $\dim A_\infty$ ranges from $0$ to $d$ as $p$ ranges from $0$ to $1$.
Let us mention that a rich dimension theory of fractal percolation has emerged, see \cite{KTRecentProgressFractal2025} for a modern review.

\subsection{Percolative properties}
The next natural question is about connectivity: when does  $A_{\infty}$ have a large connected component?
Such percolative properties have implications in a variety of questions, as will become apparent through the course of this discussion.
Towards understanding such properties let us introduce some definitions.

By a \emph{path} in $\R^d$ we mean a continuous function $\Path : I\to\R^d$, where $I=[0,\infty)$ or $I=[0,T]$ for some $T\in(0,\infty)$.
We define the fractal percolation critical probability as
\[
    p_c(d,\br) := \inf\left\{p : 
        \P_p\bigl(
            \textup{$A_\infty(d,\br)$ contains an unbounded path}
        \bigr)
     > 0\right\}.
\]
It is known that $p_c(d,\br)\in(0,1)$ for all $d\ge 2$ and all $\br\ge 2$ \cite[Theorem 1]{CCDConnectivityPropertiesMandelbrots1988} (the result is formulated only for $d=2$, but the proof works in any dimension).
The key difficulty in the study of this model, as opposed to usual Bernoulli percolation, is that the environment exhibits a hierarchical correlation structure. Nonetheless, one might expect that as the dimension increases the correlation becomes easier to control, leading to the emergence of a more mean-field picture. 
Towards this, Ding, Gwynne, and Zhuang \cite{DGZPercolationThickPoints2026} recently initiated the study of high-dimensional behavior of $p_c$, proving the following theorem.
\begin{theorem}[{\cite[Theorem 1.5]{DGZPercolationThickPoints2026}}]\label{t:dgz-pc}
    $\lim_{d\to\infty}p_c(d,2)=0$.
\end{theorem}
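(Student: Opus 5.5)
The plan is to show that for every $\varepsilon>0$ there is $d_0$ such that $p_c(d,2)\le \varepsilon$ for all $d\ge d_0$. I would do this with a renormalization argument built on a two-dimensional slice. Fix $p$ small but with $p\gg 2^{-d}$ (say $p = 2^{-d/2}$, or any fixed $\varepsilon$), and look for connectivity inside a low-dimensional \emph{slab} of a subcube at each scale.

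\textbf{Step 1: a good event at a single scale.} Call a $2^{-k}$-box $Q$ \emph{good} if, among its $2^d$ children retained at scale $k+1$, there is a set that is both abundant and well connected: for example, at least $(1-\delta)$ of the children in every face-adjacent pair of sub-slabs are retained. A child retained with probability $p$ yields, in expectation, $2^d p\to\infty$ retained children of its own. The key observation I would exploit is that a box in $\{0,1\}^d$ has $d$ neighbours in the hypercube graph. Percolation on the hypercube $Q_d$ at parameter $p$ with $pd\to\infty$ has a giant component of size about $2^d$, and it has this whether one percolates vertices or edges. So at each scale the retained children of a retained box form a giant connected cluster in the hypercube adjacency with probability $1-o(1)$, and the cluster occupies a positive fraction of \emph{every} face of the box (each face is itself a $Q_{d-1}$).

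\textbf{Step 2: multiscale gluing.} The goal is to show that with positive probability the boxes that are ``good at every scale along a nested chain'' form a connected set, and that a limiting path exists. I would define a tree or branching structure on good clusters. A retained box $Q$ at scale $k$ contains a giant cluster $C(Q)$ of children. Each child in $C(Q)$ is itself good with probability $1-o(1)$, independently, and its own giant cluster touches each of its faces in many places. Two adjacent children $Q_1,Q_2\in C(Q)$ share a face of dimension $d-1$, containing $2^{d-1}$ grandchild positions on each side. The two giant clusters restricted to that face each have density $\ge c$. Because the two sides are independent, matching pairs across the face exist with overwhelming probability (about $2^{d-1}c^2$ matches expected). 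This glues the clusters at scale $k+2$, and iterating produces a nested sequence of connected unions of retained boxes. Compactness then gives a connected compact set in $A_\infty$ crossing $[0,1]^d$, and stitching crossings of neighbouring unit boxes yields an unbounded path.

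\textbf{Step 3: controlling failures.} The failure probability per box is $\eta(d)\to 0$, but the number of boxes grows, so I cannot demand that every box be good. Instead I would use a domination argument. Declare the bad boxes, together with their subtrees, removed; goodness of distinct boxes at the same scale is independent given their parents. The surviving structure then dominates a fractal percolation in which a child survives with probability at least $p(1-\eta)$. Under hypercube adjacency this is still supercritical in the giant-component sense, so Steps 1 and 2 apply with slightly worse constants, uniformly across scales. The main obstacle is the path-connectedness of the limit. Connectivity of every $A_n$ only gives connectedness of $A_\infty$ in the compact-set sense, and an explicit path needs the gluing points to be nested. To handle this I would first try to copy the Chayes--Chayes--Durrett style argument: show that left–right crossings of $[0,1]^d$ exist in $A_\infty$ whenever, with probability close to $1$, each retained box contains crossings of itself by good children in every coordinate direction. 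That reduction is proved in \cite{CCDConnectivityPropertiesMandelbrots1988}, and it extends to all dimensions as remarked above. It would then remain to verify this crossing property via Steps 1–2 once $d$ is large.
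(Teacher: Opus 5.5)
There is a genuine gap, and it sits exactly where you flag the ``main obstacle'': the multiscale gluing. Step 1 is roughly right: for fixed $p$ and large $d$, the retained children of a box form site percolation on $Q_d$, with a giant cluster of size about $p2^d$ that meets every face. The trouble starts after that.

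To join two face-adjacent children in $A_\infty$ you need several things in sequence. First, a matched pair of retained grandchildren across the shared face. Then a matched pair across \emph{that} pair's shared face, lying inside their good clusters. And so on through every scale. Each single matching succeeds with high probability. But your nested structure needs a number of matchings growing like $(p2^d)^n$, so you cannot demand all of them. A single failed matching on an adjacency you are using disconnects the structure, unless the construction can reroute.

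Step 3 does not supply this. A notion of goodness that depends only on one box's subtree is indeed independent across boxes, but it carries no information about gluing. Gluing is a joint property of two neighbouring subtrees at all deeper scales. Once you build gluing into goodness, you lose both the independence and the comparison with a fractal percolation at parameter $p(1-\eta)$. That comparison is circular in any case: showing that a fractal percolation at parameter $p(1-\eta)$ percolates is the original problem. A giant component of the hypercube graph at each individual level does not imply it.

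The fallback reduction you attribute to \cite{CCDConnectivityPropertiesMandelbrots1988} is also not available in this regime. That argument is a high-density renormalization. It operates at $p$ close to $1$, where good boxes have all but a few children good. There, almost every position on a shared face is occupied from both sides, so gluing across faces is automatic at every scale.

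At small $p$ only a $p$-fraction of children survive. Then ``crossings of every retained box by good children in every coordinate direction'' does not give connectivity, for three reasons. For $\br=2$ such a crossing can consist of a single pair of adjacent children. Once $d\ge 3$, crossings in different directions need not meet. And crossings of adjacent boxes need not meet on the common face. Separately, your suggested choice $p=2^{-d/2}$ would break Step 1, since $pd\to 0$, and Step 2, since $2^{d-1}p^2=\tfrac12$.

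The route the paper relies on, from \cite{DGZPercolationThickPoints2026}, avoids gluing entirely. It fixes the explicit family $\Paths{n}$ of lattice paths obtained by iterated refinement inside tubes. These paths are continuous and uniformly H\"older, so goodness is a property of a single path, and limits of good paths are paths in $A_\infty$. Existence of a good path, with probability bounded below uniformly in $n$ and $M$, then follows from Paley--Zygmund. The second moment is controlled because the overlap $\sum_k|\Path_k\cap Q_k|$ of two independent uniform paths is dominated by a branching process with mean offspring $O(1/d)$. That branching process starts from the number of returns of a random walk whose return probability $q_d\to 0$.
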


Thus the above statement establishes that, similar to bond percolation, the critical probability converges to $0$ as the dimension $d$ increases, primarily because the model approaches approximately a branching process on a tree of degree $2d$ where the critical percolation probability is $\frac{1}{2d}.$

Going further into geometric properties, a quantification of connectivity of fractal percolation involves the study of the \emph{chemical distance} (internal length metric) and the geometry of the corresponding \emph{geodesics} (shortest paths) which will be often termed as \emph{chemical geodesics}.
Towards this, in sharp contrast to supercritical bond percolation  where the induced metric is coarsely isometric to the Euclidean metric, in \cite{ChaLengthShortestCrossing1996} Chayes proved the following theorem {(see also \cite{OrzLowerBoundBoxcounting1998}).} 

\begin{theorem}[\cite{ChaLengthShortestCrossing1996}]\label{t:chayes-chemical-geodesic}
    In dimension $d=2$, for any $p \ge p_c(2,\br)$,
    there exists $\e=\e(p,\br)>0$ such that almost surely, any path $\Path\subset A_{\infty}$ must have dimension at least $1+\e$. In fact, {he actually proves a discrete version: any crossing of $A_n\cap [0,1]^2$ passes through at least $\br^{(1+\e)n}$ boxes.}
\end{theorem}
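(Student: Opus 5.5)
We prove the discrete statement, which gives the dimension claim through the standard box-counting argument. Fix a crossing of $A_n\cap[0,1]^2$ and let $N_k$ be the number of $\br^{-k}$-boxes of $A_k$ that the crossing meets. The plan is to show that with high probability $N_{k+m}\ge \br^{(1+\e')m}N_k$ along the crossing, for some fixed block length $m$ and a positive fraction of the scales $k$. This uses the self-similarity of the construction: conditional on the configuration up to scale $k$, the retained $\br^{-k}$-boxes are refined independently, and each refinement has the law of a rescaled copy of $A_m$.

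\textbf{Step 1: a one-block defect event.} Take a retained $\br^{-k}$-box $Q$ and the $3\times 3$ block of $\br^{-k}$-boxes around it. A path crossing from $Q$ to the boundary of this block must traverse an annulus at scale $\br^{-k}$. Inside the annulus we want, with probability at least $\delta>0$ uniformly in $k$, a forced detour. Concretely, the event is that some $\br^{-(k+1)}$-subbox lying on every short route is deleted, so any crossing of the annulus inside $A_{k+m}$ must use at least $(1+\eta)\br^{m}$ subboxes of size $\br^{-(k+m)}$ instead of the minimal $\approx\br^m$.

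The source of this event is that at $p<1$ every straight corridor gets blocked with positive probability. The crossing then has to go around the blocked subbox, and this costs a constant factor of extra length at that scale. Since $p\ge p_c$, crossings exist with positive probability, so conditioning on a crossing does not destroy the detour event. We need an FKG or independence argument to keep $\delta$ positive: deletions of subboxes are decreasing events, while the crossing event is increasing. To get around this, we choose the defect to be a local deletion together with good connectivity around it, and we fix $m$ large enough that a detour always exists when a crossing exists.

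\textbf{Step 2: multiscale aggregation.} Cover the crossing, at each scale $k=jm$, by the disjoint annuli of the $\br^{-k}$-boxes it visits; a crossing passes through $\gtrsim\br^{k}$ of these. The refinements of disjoint boxes are independent, so a Chernoff bound shows that a fraction at least $\delta/2$ of the annuli have the defect, with failure probability $e^{-c\br^{k}}$. The difficulty is that the crossing is not fixed in advance. We handle this with a union bound over coarse-grained paths: at scale $\br^{-k}$ there are at most $C^{\br^{k}\cdot(\text{length})}$ lattice animals of the relevant size. This is beaten by the Chernoff bound once $m$ is large, since the tail decays as $e^{-c(m)\br^{k}}$ and $c(m)\to\infty$ can be arranged by requiring the defect in several disjoint subannuli.

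We then apply the length multiplication recursively across the blocks $j=0,\dots,n/m$. At each block, a positive fraction $\delta/2$ of the path segments gain a factor $1+\eta$, so
\[
N_{(j+1)m}\ge \br^{m}\bigl(1+\tfrac{\delta\eta}{2}\bigr)N_{jm}.
\]
This gives $N_n\ge \br^{n}(1+\delta\eta/2)^{n/m}=\br^{(1+\e)n}$ with $\e=\log_\br(1+\delta\eta/2)/m$. Borel--Cantelli over $n$ then yields the almost sure statement, and the box-counting lower bound transfers to Hausdorff dimension via a mass distribution argument, with the counting done in every subsquare.

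\textbf{Main obstacle.} The hardest step is the union bound over paths in Step 2. The number of candidate paths at scale $\br^{-k}$ grows exponentially in their length, and the path with the fewest defects is chosen adversarially, with its length itself random. I would first try the standard renormalization trick: declare a $\br^{-k}$-box ``good'' if every crossing of its annulus inside $A_{k+m}$ pays the $1+\eta$ factor. Choosing $m$ large makes goodness hold with probability $1-o(1)$, and then a Peierls-type count shows that every long coarse path contains a positive density of good boxes.
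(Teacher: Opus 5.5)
The paper does not prove this statement (it is quoted from Chayes as background), so your argument has to stand on its own. As written it has a genuine gap at its core.

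\textbf{The one-block estimate is missing, and Step 1 does not supply it.} No single $\br^{-(k+1)}$-subbox meets every near-minimal crossing of the annulus around $Q$. Such a crossing can leave through any of the four sides of $Q$ and along any of $\br^{m}$ disjoint straight corridors, so a local deletion is sidestepped at no cost and forces no detour. What you need is an event on which \emph{every} crossing using at most $(1+\eta)\br^m$ boxes is blocked. You need it with probability close to $1$, not merely $\ge\delta$: for a fixed coarse path of length $L$, the Chernoff bound $e^{-c\delta L}$ with small $\delta$ cannot beat the $C^{L}$ count of coarse paths. Asking for defects in several subannuli does not help, since each local defect still blocks only some routes.

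Your last paragraph names the right structure: good boxes with $\P(\text{good})\to1$, then a Peierls count. But the claim that goodness holds with probability $1-o(1)$ is only asserted, and it is the heart of the matter. It plays the role of assumption~\ref{assump:A1} in the paper's proof of \cref{t:sublinear}, which there requires Chayes's column argument (\cref{l:easycross-dep}). For undirected crossings it can be obtained from a thin-strip observation. A crossing of a box of side $\br^{-k}$ by at most $(1+\eta)\br^m$ level-$(k+m)$ boxes makes at most $\eta\br^m$ transverse steps, so it lies in a strip of width about $\eta\br^{-k}$. With $\eta=\br^{-j}/2$ and $m>j$, the level-$(k+j)$ layer alone blocks every such strip except with probability $\lesssim\br^{j}(1-(1-p)^2)^{\br^j}$. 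Some argument of this kind must be given. Conditioning on the existence of a crossing and FKG are beside the point: you need an upper bound on the probability that a short crossing exists.

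\textbf{The aggregation step is not justified.} The inequality $N_{(j+1)m}\ge\br^m(1+\delta\eta/2)N_{jm}$ presumes that each visited $\br^{-jm}$-box contributes about $\br^m$ finer boxes. That fails, since a crossing can clip a corner of a coarse box with $O(1)$ fine boxes. The annulus device repairs this only at a price: the annuli of nearby visited boxes overlap, so they are not disjoint as you state. This leaves two options, and neither works as written.
\begin{itemize}
\item If you count distinct boxes, as your lattice-animal union bound does, you must thin to a well-separated family of annuli. That loses a fixed constant factor per block, and compounded over $n/m$ blocks it swamps the $(1+\eta)$ gain. A block-by-block recursion needs its deterministic part to be lossless up to $1-O(\br^{-m})$, with $\eta\gg\br^{-m}$.
\item If instead you count traversals in time order (e.g.\ nested exit-time pieces, which avoid the overlap), the crossing may revisit the same bad box arbitrarily often. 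A Peierls bound over sets of distinct boxes then no longer controls the fraction of non-gaining traversals. This needs a further argument, for instance charging repeated visits to the extra length they already force at coarser scales.
\end{itemize}
The same multiplicity issue affects your final step. The mass distribution principle needs upper bounds $\mu(B(x,r))\lesssim r^{1+\e}$, which do not follow from lower bounds on crossing lengths without controlling how often the curve returns to a ball.
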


The above result reflects that for two points at unit-order distance, 
the chemical geodesic connecting them 
cannot be linear at any scale: every small segment of the path deviates significantly from the straight line joining its endpoints.
This naturally leads to the problem of quantifying \emph{how far} locally linear paths are from chemical geodesics.
Such questions have garnered interest from a geometric measure theory perspective as well.
The following is a representative result.

\begin{theorem}[\cite{BJJ+FractalPercolationUnrectifiable2021}]\label{t:unrectifiable}
    Almost surely, 
    for every Lipschitz path $\Path\subset \R^d$ (i.e. $\Path$ viewed as a continuous function $[0,1]\to \R^d$ is Lipschitz), the $1$-Hausdorff measure of $\Path\cap A_\infty$ is zero. 
    (The reader is also referred to \cite{BJJ+FractalPercolationUnrectifiable2021} for many other results of a similar flavor.)
\end{theorem}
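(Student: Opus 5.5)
Since there are only countably many scales, it suffices to prove the following uniform statement. Almost surely, for every $L$ and every $L$-Lipschitz path $\Path:[0,1]\to\R^d$, the set $\Path\cap A_\infty$ can be covered, for infinitely many (indeed all large) $n$, by at most $o(\br^{n})$ boxes of side $\br^{-n}$. The conclusion then follows because the $1$-Hausdorff measure is bounded by $\liminf_n \br^{-n}\cdot\#\{\text{covering boxes}\}$. We may also restrict to $\Path([0,1])\subset[0,1]^d$, by translation invariance and a countable union.

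\textbf{Discretization.} At scale $n$, the path $\Path$ visits at most about $CL\br^{n}$ boxes of side $\br^{-n}$, which we list in order of visitation. This gives a lattice path $\gamma$ of length $N\asymp L\br^n$ with bounded steps. There are at most $C_d^{N}$ such discrete paths. This count is too large for a naive union bound against a per-path probability bound, so the per-path estimate must be much stronger than $C^{-N}$. It also must not depend on a fixed path, because $\Path$ is chosen after seeing $A_\infty$. I would therefore use a multiscale, self-similar estimate.

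\textbf{Renormalization.} Let $m$ be a large fixed integer and look at the scales $\br^{-km}$. For a box $Q$ of side $\br^{-km}$, a Lipschitz path crossing $Q$ meets only $O(L\br^m)$ of its subboxes at scale $\br^{-(k+1)m}$, so it meets a fraction $O(L\br^{m(1-d)})$ of them. Each subbox is independently deleted at the intermediate scales with probability $1-p^m$. Define
\[
f_k=\sup_{\Path}\ \br^{-km}\#\{\text{$\br^{-km}$-boxes meeting }\Path\cap A_{km}\},
\]
where the supremum is over $L$-Lipschitz paths. The goal is to show $f_k\to0$ almost surely.

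The key step is to show that with high probability most of the path's boxes at level $k$ lose a definite fraction of their children at level $k+1$, uniformly over paths. Fix a box $Q$ that the path meets. Locally the path is a curve of length about $\br^{-km}$ inside $Q$. Cover the space of all such local curve shapes at resolution $\br^{-(k+1)m}$ by at most $e^{C\br^{m}}$ discrete shapes. For each fixed shape, the number of retained children along it is a sum of $\asymp\br^m$ independent Bernoulli$(p)$ variables (at the first of the $m$ sub-scales). By Chernoff, the probability that more than a fraction $(1+p)/2$ survive is at most $e^{-c\br^m}$. This does not beat the entropy $e^{C\br^m}$. I would fix this with a hierarchical union bound along the $m$ intermediate scales: at each one-step scale, a crossing of a box meets $O(\br)$ children, and we demand a loss with some probability $q>0$. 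The event that a box is \emph{good}, meaning every crossing loses at least one child, has probability $q(p,\br,d)>0$. To see this, at the first sub-scale, with positive probability a full "wall" of children (a hyperplane slab, or an annulus around the box) is deleted. Every crossing must then lose mass at that sub-scale.

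Iterating, along any path the surviving proportion at level $k$ is dominated by a product of factors $(1-\eta\mathbf 1_{\text{good}})$. Goodness is independent across disjoint boxes. A path visits $\asymp\br^{km}$ boxes, and by a large-deviation/Peierls count over the tree of boxes a path meets, almost surely all paths see a positive density of good boxes at infinitely many levels. This gives $f_k\le(1-\eta')^k\to0$.

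\textbf{Main obstacle.} The main obstacle is the uniformity over uncountably many adversarially chosen paths. My concrete attempt is the "wall" construction. In $d\ge2$, a path entering through one face and crossing to its central region must cross a shell of $\asymp\br^{d-1}$ children. Deleting an entire layer has tiny but positive probability, and crossing it is forced. A path that avoids the centers of most of its boxes only moves along boundaries, and I would handle that case by shifting the grid (using several translated grids) so that some grid sees it centrally. Getting the Peierls count to close, with entropy $C^{\#\text{boxes}}$ against a per-box good probability $q$, requires choosing $m$ large so that $q$ can be boosted toward $1$. This boost comes from requiring only one of $\br^{m-1}$ sub-scale walls among nested generations.
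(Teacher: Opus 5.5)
\textbf{Main gap: uniform loss over adversarial Lipschitz paths.} The paper only cites this result from \cite{BJJ+FractalPercolationUnrectifiable2021}, so your argument has to stand on its own. Its key step, a definite loss of length that is uniform over adversarially chosen Lipschitz paths, is not established, and the mechanism you propose cannot supply it.

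Your ``good'' event (a full shell or slab of children deleted) has probability $q\le(1-p)^{\br^{d-1}}$. A Peierls count at a fixed level pits an entropy $C_d^N$ against $\P(\mathrm{Bin}(N,q)<\delta N)\ge(1-q)^N$, so it cannot close unless $q$ is close to $1$. This is not a technicality. When $q$ is small, the non-good boxes form a highly supercritical site percolation, and a path can avoid every good box at a given level at a length cost of only a factor $1+O(q)$. What prevents this at most levels is that such costs compound across levels while the total length stays at most $L$. That multiscale accounting of detours is exactly what is missing.

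Your proposed boost of $q$ toward $1$ does not substitute for it. Take $p>p_c(d,\br)$. By self-similarity, with probability bounded below uniformly in $k$ and $m$, the deletions at scales finer than $Q$ leave a crossing of $Q$ that meets no deleted box at any finer scale and so loses nothing. Hence any event of the form ``every crossing of $Q$ loses mass at one of the $m$ sub-scales'' has probability bounded away from $1$. A good event whose probability tends to $1$ must build in the length budget: ``every crossing of $Q$ of length at most $C\br^{-km}$ loses a definite fraction of its length.'' Proving that is the local content of the theorem itself, so the argument is circular exactly where the work lies. In the paper's directed analogue, Theorem~\ref{t:sublinear}, this input is the absence of directed crossings (Lemma~\ref{l:easycross-dep}, after Chayes), and directedness rules out detours altogether.

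\textbf{Further problems in the bookkeeping.} Meeting a deleted child gives no lower bound on the length removed, since a path can clip a corner. A deleted shell removes at least $\br^{-j-1}$ of length only from paths that traverse it, which is a negligible fraction for a path winding inside the box. Your bounds ``a crossing meets $O(L\br^m)$ subboxes'' and ``a path visits $\asymp\br^{km}$ boxes'' hold only on average or as upper bounds.

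Shifting grids does not help, because the deletions live on the fixed $\br$-adic grid. In the directed case the paper needs the $1$-dependent field $\mathsf{Dep}$ and the Major boxes precisely to turn deleted boxes into lost length.

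Moreover, $f_k$ is a supremum of box counts, and the counts of a single path at successive levels are not comparable: a path may meet one level-$k$ box and all $\br^{dm}$ of its children. So $f_k\le(1-\eta')^k$ would need a length-weighted quantity such as $\mathcal H^1(\Path\cap A_{km})$, together with the detour accounting above.

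\textbf{A possible repair: localize at a tangent point.} If $\mathcal H^1(\Gamma\cap A_\infty)>0$, then at $\mathcal H^1$-a.e.\ $x\in\Gamma\cap A_\infty$ the rescaled restrictions of $\mathcal H^1$ to $\Gamma\cap A_\infty$ converge to length measure on a line $\ell$. So for fixed $J$ and all small scales, every segment of relative length $\asymp\br^{-J}$ of a thin tube around $\ell$ near $x$ meets $A_\infty$. On the other hand, consider a $\mathrm{poly}(\br^J)$-size net of tubes through a level-$k$ box. The probability that some net tube has no segment whose covering level-$(k+J)$ boxes are all deleted is at most $e^{-c\br^J}$. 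This beats the $\br^{dJ}$ branching per block in a first-moment count over nested chains of boxes. Hence almost surely every point sees such a hole at infinitely many scales, which contradicts the tangent property.
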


Lipschitz paths are locally linear, so the above result essentially says that any such path's intersection with $A_\infty$ accounts for a vanishing fraction of its total length.
This phenomenon is the object of investigation in this paper through the lens of directed percolation in fractal environments.
Directed percolation is a model of directed fluid flow, wherein one studies discrete analogues of continuum percolative properties of Lipschitz paths.
While the central motivation of the present paper is to understand random directed geometry (last passage percolation) driven by correlated environments such as the Gaussian free field or fractal percolation, to contextualize the main results we will first briefly segue into a general discussion of last passage percolation.

\subsection{Last passage percolation: KPZ universality and beyond}
\emph{Last passage percolation} (LPP) is a model of a zero-temperature directed polymer in a random environment $\{\omega_v\}_{v\in \Z^{d}}$.
For simplicity in the present discussion we assume $d=2$, with definitions for general $d\ge 2$ postponed to Definition \ref{def:directed}.
For any two ordered points in $\Z^{2}$, say $(0,0)$ and $(2^n, 2^n)$, one considers the maximum energy up-right lattice path connecting them, where the energy of a path is the sum of the random variables $\omega_v$ over $v$ lying on the path.
Beyond the connection to polymers, LPP also  models the flow of a fluid through a disordered porous medium with a gravitational force making the fluid move monotonically ``downwards;''
its undirected counterpart, known as \emph{first passage percolation}, was introduced by Hammersley and Welsh \cite{HWFirstPassagePercolationSubadditive1965} as indeed a model for the flow of a fluid through a random medium.
When the environment variables $\{\omega_v\}_{v\in\Z^{2}}$ exhibit fast decay of correlation and are light tailed, the LPP model is expected to exhibit fluctuation exponents characteristic of the Kardar--Parisi--Zhang universality class \cite{KPZDynamicScalingGrowing1986}.
The last few decades have witnessed an explosion of activity aimed at rigorously establishing such predictions (for background, see the surveys \cite{QuaIntroductionKPZ2011,CorKardarParisiZhangUniversality2016,ADH50YearsFirstpassage2017,GanRandomMetricGeometries2022,ZygDirectedPolymersRandom2024} and the references therein).
On the other hand, the universality picture is much less understood when the underlying disorder is correlated, even in the non-rigorous literature.

\addtocontents{toc}{\SkipTocEntry}
\subsection*{LPP on a fractal percolation environment}

While undirected percolation can be understood equivalently in terms of lattice paths or continuous paths, interpreting directed percolation requires more care.
Since the physical relevance of the lattice model is more apparent, we will use it to define directedness, and then afterwards comment on continuum analogues.

\begin{definition}[Directed paths]\label{def:up-right}
    Assume dimension $d=2$.
    For $\e>0$, a lattice path in $\e\Z^2$ is called \emph{directed} (or \emph{up-right}) if it only takes steps in $\{\e\ee_1, \e \ee_2\}$, where $\ee_i$ is the $i\th$ standard basis vector.

    Note that an up-right lattice path can be extended to a continuous path in $\R^2$ by linearly interpolating between consecutive vertices (so each edge is traversed in time $\e$).
\end{definition}

\begin{remark}
    Definition \ref{def:up-right} can be generalized to dimensions $d >2$; one natural such generalization is presented later in Definition \ref{def:directed}.
\end{remark}

\begin{remark}[Continuum analogues of directedness]\label{r:directed-lipschitz}
    Notice that up-right paths in $\e\Z^2$ connecting, say $(0,0)$ and $(1,1)$, extended to continuous paths in $\R^2$ by linear interpolation,
    are of finite variation with their length staying bounded uniformly in the path and in $\e$. Consequently, as the lattice spacing $\e\to 0$, any uniform limit of such directed paths is also Lipschitz.
    In particular, the trace of such a path has Hausdorff dimension $1$. 
    This heuristically suggests that the percolative properties of directed lattice paths should match the percolative properties of paths with $1$-dimensional traces.
\end{remark}

For concreteness we now restrict to the fractal percolation model with branching rate $\br=2$.
The study of directed fractal percolation was initiated by Chayes \cite{ChaAbsenceDirectedFractal1995}, who showed that in dimension $d=2$, for any $p\in(0,1)$, almost surely $A_\infty$ does not contain a directed path connecting the left and right sides of the unit square (see also \cite{ChaLengthShortestCrossing1996,OrzLowerBoundBoxcounting1998,CPPNoDirectedFractal1997}).
Note that Theorem \ref{t:unrectifiable} is actually a stronger statement.
A related result in the setting of the 2D GFF was obtained in \cite{DZLiouvilleFirstPassage2019}; the connection between GFF and fractal percolation is spelled out in the upcoming Section \ref{s:LPP-on-GFF}.

To quantify the failure of directed fractal percolation, we consider the following natural last passage percolation (LPP) model.
For $n\ge0$ and any up-right path $\pi$ in $2^{-n}\Z^2$ connecting $(0,0)$ and $(1,1)$, let $T_n(\pi)$ be the number of $2^{-n}$-boxes contained in $A_n$ that $\pi$ intersects.  
We define the \emph{last passage time} by
\begin{align}\label{e:intro-LPP}
    L_n := \max_\pi T_n(\pi),
\end{align}
where the maximum is over all directed paths connecting $(0,0)$ and $(1,1)$.
So for example, we have $L_n \le 2^{n+1}-1$, with equality if and only if $A_n$ contains an up-right path connecting $(0,0)$ and $(1,1)$.

With this language, the aforementioned result of \cite{ChaAbsenceDirectedFractal1995} says that for any $p\in(0,1)$, almost surely, 
\[
    L_n < 2^{n+1}-1
    \quad\text{for all sufficiently large }n.
\]
Similarly, the result of \cite{DZLiouvilleFirstPassage2019} says (in the related setting of the 2D GFF) that there exists $c\in(0,1)$ such that almost surely,
\[
    L_n \le c 2^{n+1}
    \quad \textup{for all sufficiently large }n,
\]
i.e. at least a constant fraction of any path lies outside the fractal set. It is natural to speculate about the true growth rate. For  reasons discussed shortly, we believe that $L_{n}$ grows as a polynomial with an exponent $\gamma< 1$, i.e., $L_{n}\approx 2^{\gamma n}$ where $\gamma=\gamma(p,d) \to 0$ as $p \to 0.$

Our first result falls slightly short of the above predicted polynomial upper bound and proves a quasi-polynomial (in $2^{n}$) improvement instead.
\begin{theorem}\label{t:sublinear}
    In dimension $d=2$,
    for all $p\in(0,1)$, there exists $c=c(p)>0$ such that for all $n\ge 1$,
    \begin{align}\label{frac lpp eq}
        \bP_p\left(
            L_n \ge 2^n e^{-c\frac{n}{\log n}}
        \right)
        \le e^{-c'n^{10}},
    \end{align}
    where $c'>0$ is a universal constant.
\end{theorem}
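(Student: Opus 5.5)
We will prove Theorem \ref{t:sublinear} by a multi-scale renormalization in which scales are grouped into blocks of length $m\approx \log n$. This produces $n/m \approx n/\log n$ block-levels, and we aim to lose a constant factor $(1-\delta)$ at each of them.

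\textbf{Single-block input.} Fix $m$. Consider a box $Q$ of side $2^{-k}$ and the fractal percolation inside $Q$ restricted to scales $k+1,\dots,k+m$. We want a ``defect'' property to hold with probability at least $1-e^{-c m}$, or at least with probability bounded below. The property is this: every up-right path crossing $Q$ at slope bounded away from $0$ and $\infty$ spends at least a $\delta$-fraction of its length in deleted subboxes.

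The mechanism is the Lipschitz obstruction, as in \cite{ChaAbsenceDirectedFractal1995} and Theorem \ref{t:unrectifiable}. A deleted subbox of side $2^{-k-j}$ blocks a directed path unless the path detours around it. For a directed path, a detour around a box of relative size $2^{-j}$ costs nothing in total length, since all up-right paths between two points have the same length. However, the detour forces the path to hit other boxes.

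The cleaner route is probabilistic. Count the fraction of the path's length lying in retained boxes at each relative scale $j\le m$. At each scale independently, a path of $2^{j}$ steps through $2^{-k-j}$-boxes touches about $2^j$ boxes. Each of these is deleted with probability $1-p$, independently across boxes. The path can only avoid deletions by choosing among at most $2^{2^j}$ routes, i.e.\ about $2^j$ binary choices. A union bound against the large-deviation probability $e^{-I\cdot 2^j}$ is not automatically favourable, and this is exactly why we group scales.

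Across $m$ scales, the total number of route choices in the coarse-grained sense is $\sum_j 2^j$. We will instead argue that the retained fraction multiplies as follows. If at each scale the best path retains fraction $\theta_j$, then the chance the best path retains fraction greater than $1-\delta$ at all $m$ scales simultaneously decays. We will use the following cleaner formulation.

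\textbf{Renormalization.} Define $L^{(k)}$ as the maximal number of retained $2^{-k}$-boxes (up to scale $k$) met by a directed crossing. Here we allow arbitrary entry and exit points on the boundary of a coarse box, with slopes in a fixed cone, plus boundary-shifting slack. A path of length $2^n$ is cut into pieces crossing $2^{-jm}$-boxes. The contribution from each coarse box is controlled by its block-level defect, and we get a recursive inequality $L_{(\ell+1)m}\le (1-\delta)2^m L_{\ell m}$. This holds except on events whose number we control by a union bound over coarse path skeletons.

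A skeleton at level $\ell$ is a directed lattice path on $2^{-\ell m}\Z^2$, so there are at most $4^{2^{\ell m}}$ of them. Against this we need the number of ``good'' coarse boxes along a skeleton to be large except with probability $e^{-c 2^{\ell m}}$. This follows from independence of disjoint boxes and Chernoff, once the single-block defect probability exceeds $1-4^{-C}$. We can guarantee this by taking $m$ large: take $m = C'\log n$ so that the single-block failure probability is at most $n^{-20}$. Iterating over $n/m$ block-levels gives
\[
L_n\le 2^n(1-\delta)^{n/m}=2^n e^{-c n/\log n}.
\]
The error probability is dominated by the finest levels. Summing the Chernoff bounds and using the level-one bound with $m\sim\log n$ exponents yields $e^{-c'n^{10}}$, after tuning $C'$ and noting that a failure must occur in many coarse boxes along a single skeleton.

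\textbf{Main obstacle.} The main obstacle is the single-block lemma. We must show that within $m$ scales, a directed crossing cannot simultaneously avoid deletions at every scale. The deleted-box densities must force a fixed fraction loss uniformly over all directed crossings and boundary conditions. The first approach to try is Chayes' argument \cite{ChaAbsenceDirectedFractal1995}: with positive probability a configuration of deleted boxes of geometrically growing sizes forms a ``wall'' that every directed path in the cone must intersect substantially. We then boost this positive probability to $1-e^{-cm}$ by using the $m$ independent scales as independent trials.

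A second difficulty is the dependence between adjacent coarse boxes that a path visits only partially. We would handle it with a checkerboard decomposition, so that independent families are used in each Chernoff bound.
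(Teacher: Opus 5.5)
Your iteration, as set up, does not give a multiplicative loss. A union bound over full skeletons on $2^{-\ell m}\Z^2$ (entropy $\approx 4^{2^{\ell m}}$) against a Chernoff bound only shows that most coarse boxes on the \emph{whole} skeleton are defective. After $\ell$ block-levels, however, the surviving part of the path is only a $(1-\delta)^{\ell}$-fraction of that skeleton, and nothing stops the surviving part from sitting exactly where the defects are absent.

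Suppose you instead apply Chernoff, conditionally on the coarser scales, to the $R$ surviving coarse boxes. The exponent is then of order $R\log(1/\bar p)$, with $R$ as small as $(1-\delta)^{\ell}2^{\ell m}$. This loses to the skeleton entropy $2^{\ell m}\log 4$ once $(1-\delta)^{\ell}\log(1/\bar p)\lesssim 1$. With your $\bar p\approx n^{-20}$ that happens after $O(\log\log n)$ block-levels. Even $\bar p=e^{-c2^m}$ would cap the number of levels at $O(m)$, giving at best $e^{-c\sqrt n}$.

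The paper avoids this as follows.
\begin{itemize}
\item It proves an estimate (Proposition~\ref{p:onescale}) that holds simultaneously for \emph{all} directed paths between two fixed endpoints, with failure probability $e^{-c2^{k}}$ in the relative length $2^{k}$. The path entropy is absorbed by the Bernoulli LPP tail (Lemma~\ref{blpp tail}).
\item It applies this estimate to each surviving segment separately, union-bounding only over segment endpoints ($\le 2^{4n}$ pairs).
\item This requires surviving segments to be long relative to the next block. That is Lemma~\ref{l:separation}, obtained by removing only one box per bad box of a well-separated sublattice $\Lambda^{(\mu,\nu)}_{k-\ell}$.
\item The condition $e^{-c2^{k_i-k_{i-1}}}\ll 2^{-4n}$ is the actual reason blocks must have length $\asymp\log n$. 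It is not about making a single-block failure probability equal to $n^{-20}$.
\end{itemize}

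The single-block input is also not reachable along the route you sketch. Chayes' argument only shows that each directed crossing \emph{meets} a deleted box. A path may meet deleted boxes essentially at their corners, so its microscopic length inside them can be negligible, and a $\delta$-fraction loss does not follow. Grouping scales into independent trials does not change this.

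Restricting to pieces with slope in a cone is also not legitimate. Pieces of an arbitrary up-right path inside coarse boxes may clip corners or run nearly horizontally.

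The paper handles both issues.
\begin{itemize}
\item For entry/exit flexibility it uses the event $\Easy$. A path entering $\su$ through its left or bottom side and leaving $\su^{\boxup}$ through its right or top side must cross a $2\times 1$ rectangle vertically or its reflection horizontally.
\item For corner clipping it passes to the $1$-dependent field $\Dep$, in which a box counts as closed only when it and its four neighbours are deleted. The path then spends at least $2^{n-k-1}$ microscopic steps in some deleted \Major box (Lemma~\ref{claim1}).
\item The price is re-proving absence of directed crossings for $\Dep(\sigma_k)$ (Lemma~\ref{l:easycross-dep}). This is the genuinely new input your plan is missing.
\end{itemize}
With these tools a \emph{fixed} number $\ell$ of scales suffices. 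The Bernoulli LPP bound on the sparse $\Easy$ environment gives a constant fraction of bad coarse boxes along every path. One \Major deleted box per bad box then yields a loss fraction $\asymp 2^{-\ell}$ per block, with no need for a $1-e^{-cm}$ single-block bound.
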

The exponent $10$ in the tail bound is arbitrary and the arguments can be modified to yield a tail bound of $e^{-n^C}$ for any $C>0$, provided the constant $c$ is chosen appropriately.  As already evident from the preceding discussion, the $\log n$ factor is expected to be spurious and stemming solely from the inefficiency in the proof method. Further elaboration on this appears later in Remark \ref{gap45}.

\begin{remark}[Beyond $d=2$]
Note that there is a natural version of \eqref{e:intro-LPP} in any dimension $d\ge 2$, given an appropriate higher-dimensional analogue of directedness (see, for example, Definition \ref{def:directed}). The restriction to dimension $d=2$ in Theorem \ref{t:sublinear} is technical in nature, and we briefly explain its origin. As will be elaborated in Section \ref{s:iop}, the proof is based on a multiscale analysis. The base-scale estimate, which is then iterated across scales, takes as input the soft statement that directed fractal percolation fails almost surely. Our proof strategy extends to any dimension in which this input is available. At present, however, the latter has only been established for $d=2$ in \cite{ChaAbsenceDirectedFractal1995} relying on planarity.\footnote{We in fact require a slight generalization of this statement; see Lemma \ref{l:easycross-dep} below.}
Nevertheless, an analogous result is expected to hold in every dimension, as suggested by the aforementioned work \cite{BJJ+FractalPercolationUnrectifiable2021}, proving that Lipschitz curves have $1$-Hausdorff-null intersections with the fractal percolation set for every $d\ge2$. While a further multiscale argument, reminiscent of the framework developed in \cite{DZLiouvilleFirstPassage2019}, is likely to yield the required input in arbitrary dimension, to avoid introducing substantial additional technicalities, we do not pursue this extension here.
\end{remark}

\begin{remark}[The true growth rate]
    As already indicated we expect that the gain in Theorem \ref{t:sublinear} should in fact be polynomial, and the true growth rate of $L_n$ is of order $2^{\gamma n + o(n)}$ for some $\gamma=\gamma(p,d)\in(0,1)$ with $\gamma(p,d)\to 0$ as $p\to 0$.

    To start, we explain why a polynomial lower bound holds.
    For simplicity we assume $d=2$, but the same result can be proved in any dimension.
    Assume $A_\infty$ is non-empty, or equivalently $\dim A_\infty > 0$.
    Fixing any $\gamma\in (0, \dim A_\infty)$,
    a straightforward branching process argument shows that conditional on $A_\infty \cap [0,1]^2$ being non-empty,
    for all sufficiently large $n$
    the set $A_n\cap [0,1]^2$ contains more than $2^{\gamma n}$ many $2^{-n}$-boxes. While sharper arguments may be formulated without too much difficulty, already a quick application of the Erd\H{o}s--Szekeres theorem \cite[Theorem 1.2]{RomSurprisingMathematicsLongest2015} and symmetry shows that with probability at least $1/2$, we have $L_n > \sqrt{2^{\gamma n}}$.

    Turning to the upper bound, an informal justification is essentially given by the fact that if there is no directed percolation, then each scale should contribute to a constant factor (independent of the scale) of closed sites among the sites that are retained from previous scales. Iterating this for the $n$ scales implies that the number of eventual open sites on any directed path must be smaller by a polynomial factor in $2^n,$ the length of the path.

    Our proof strategy is indeed guided by this intuition and further mathematical precision is postponed until Section \ref{s:iop}. Nonetheless, we conclude this discussion presently by recording a few relevant facts along this line.  Note that in the case when $\dim A_\infty <1,$ the trivial bound of $L_n \le S_n$ where $S_n$ is the number of retained $2^{-n}$-boxes in $[0,1]^d$ already yields a bound of the form $2^{\gamma n}$ with $\gamma<1$.
    Indeed, recall from \eqref{e:firstmoment}--\eqref{e:dimension} that
    \begin{align*}
        \E_p[S_n]
        = 2^{(\dim A_\infty)n}.
    \end{align*}
    Thus the main novelty of \cref{t:sublinear} is in the regime $\dim A_\infty \ge 1$. Strong bounds for any value of $\dim A_\infty$ were established in $d=2$ for LPP restricted to a smaller class of $1$-dimensional paths, namely straight lines,
    by Shmerkin and Suomala \cite{SSSetsWhichAre2015,SSLargestSliceFractal2025}.
    In the regime $\dim A_\infty < 1$, \cite[Theorem 1.1]{SSLargestSliceFractal2025} showed that almost surely, the maximum number of points in $A_\infty \cap [0,1]^2$ lying on any straight line is \emph{finite}.
    \footnote{In fact, they identified the critical value of $p$ for which there exist $k$ collinear points, for every $k\ge 3$.}
    Turning to the regime $\dim A_\infty \ge 1$,
    let $L^{\mrm{line}}_n$ be the maximum number of $2^{-n}$-boxes in $A_n\cap[0,1]^2$ intersecting any line.
    \cite[Theorem 1.3]{SSLargestSliceFractal2025} states that if $\dim A_\infty = 1$, then $L^{\mrm{line}}_n$ grows only polylogarithmically in $2^n$.
    Finally, when $\dim A_\infty > 1$, \cite[Theorem 3.1]{SSSetsWhichAre2015} implies that $L^{\mrm{line}}_n$ is at most $O(2^{(\dim A_\infty - 1)n})$, where the implied constant is random (see also the discussion in \cite{SSLargestSliceFractal2025} around (1.2)).
    The latter bound is optimal, since intersecting $A_\infty$ with any axis-parallel line yields a $1$-dimensional fractal percolation, which has dimension $\dim A_\infty - 1$ by \eqref{e:firstmoment}--\eqref{e:dimension}.
    Note that the above polynomial lower bound for $L_n$ implies that $L_n$ is polynomially larger than $L_n^{\mrm{line}}$ for any value of $\dim A_\infty\in (0,2)$, which reflects the significant difference in entropy between the set of directed paths and that of straight lines.
\end{remark}

All the results discussed so far can be summarized as follows.
Fractal percolation, even when connected, possesses a rich fractal geometry that forces any path connecting two points to be quite tortuous, winding around at every scale.
In particular, such paths are very far from directed, and in fact one can  obtain strong quantitative bounds for how small the intersection between any directed path and the fractal percolation set is.  
Conceptually, the main geometric obstruction preventing directed paths from percolating is that their fractal dimension is $1$.
Thus, one may speculate that a variant in which paths proceed monotonically in one direction but traverse the orthogonal directions much \emph{faster} (so as to have a fractal dimension greater than $1$) may permit directed percolation.\\

Our second result confirms that this can indeed be achieved by providing an explicit construction in high enough dimensions. We call this model \emph{unrectifiable directed percolation}.

\subsection{Unrectifiable directed fractal percolation}\label{s:8x10-intro}

To help motivate the construction, we first extend to any dimension $d\ge 2$ the notion of directed paths in the usual Euclidean lattice.

\begin{definition}[Directed paths in the Euclidean sense]\label{def:directed}
    For $\e>0$ and $d\ge 1$, we define the \emph{directed lattice} $\Zdir^{d+1}_\e$ to be the directed graph with vertex set $\e\Z^{d+1}$ and with directed edges
    \[
        (z,w)\in\msf{Edges}(\Zdir^{d+1}_\e) \quad\iff\quad
        \frac{1}{\e}(w-z) \in \{\ee_{d+1}\pm\ee_i : i\in\lb 1,d\rb\}.
    \]
    A \emph{directed lattice path} (or \emph{directed path}) is a path in $\Zdir^{d+1}_\e$.
    Observe that for any path $Q$ in $\e \Z^d$, say $Q = (Q(0), Q(\e), \dots, Q(T))$ for some  $T \in \e\Z_{\ge 1}$, its graph $\{(Q(t), t) : t \in \e\Z \cap [0,T]\}$ is a directed path.
    Conversely, any directed path is the graph of a unique such path $Q$.

    For any directed path $\Path : \e\Z\cap[0,T]\to \Zdir_\e^{d+1}$, we can identify it with the continuous path $[0,T]\to \R^{d+1}$ obtained by linearly interpolating between consecutive vertices in $\Zdir^{d+1}_\e$ so that each edge is traversed in time $\e$.
\end{definition}

\begin{remark}
    Given Definition \ref{def:directed}, the natural $(d+1)$-dimensional generalization of directed fractal percolation considers LPP using directed paths in $\Zdir^{d+1}_{\br^{-n}}$ with respect to the random environment $A_n(d+1,\br)$.
    
    Note that in dimension $d=2$, the above notion of directed paths is equivalent to the previous one (up-right paths), by performing a $45^\circ$ rotation.
    However, the corresponding fractal percolation models are slightly different, as the path's orientation relative to the $\br$-adic boxes differs.
    We used the up-right model in Theorem \ref{t:sublinear} just to simplify the notation in the proof; our arguments are robust and can be modified to also apply for directed paths in the sense of Definition \ref{def:directed}.
\end{remark}

We now define the ``unrectifiable directed fractal percolation'' model alluded to above.
For $k\in\Zpos$, consider boxes in $\R^{d+1}$ of the form
\[
    (x,t) + [0,8^{-k}]^d \times [0,10^{-k}],\qquad (x,t) \in 8^{-k}\Z^d \times 10^{-k}\Z.
\]
As elaborated shortly in Remark \ref{choices} below, the numbers $8$ and $10$ hold no particular significance.
We next define a fractal percolation configuration $\wt{A}_\infty=\wt{A}_\infty(d,8) \subset \R^{d+1}$ by independently retaining the above boxes with probability $p$, exactly as in \eqref{e:k-boxes}--\eqref{e:fractal-perc-definition}.
We denote the law of $\wt{A}_\infty$ by $\wt{\P}_p$.
We also denote by $\wt{A}_n$ the superset obtained by retaining and deleting boxes only at scales $k\in\lb 0,n\rb$.

Define the scaling map $\cS_n:\R^{d+1}\to\R^{d+1}$ by $\cS_n(x,t):=(8^{-n}x, 10^{-n}t)$, where $x\in\R^d$ and $t\in\R$.
Recalling the directed lattice $\Zdir^{d+1}_1$ from Definition \ref{def:directed} (note $\e=1$), we now define a scaled version:
\begin{align}\label{e:8x10-lattice}
    \wt{\Zdir^{d+1}_n} := \cS_n(\Zdir^{d+1}_1).
\end{align}

Let $\mathbb{S}^d$ be the unit $\ell^1$-sphere in $\R^{d+1}$.
For $n,M\ge 1$ and $p\in(0,1)$, define
\begin{align*}
    \theta(n,M,p,d) := \wt{\P}_p\left(
            \textup{$\wt{A}_n(d,8)$ contains a  lattice path in $\wt{\Zdir^{d+1}_n}$ connecting $\mathbb{S}^d$ to $M\mathbb{S}^d$}
        \right).
\end{align*}
We define the critical probability
\begin{align}\label{e:8x10-critical-prob}
    \wt{\dirpc}(d) := 
    \inf\left\{
        p \in (0,1) : 
        \inf_{n\ge 1}\,
        \inf_{M\ge 1}\,
        \theta(n,M,p,d)
        >0
    \right\}.
\end{align}
Note that a priori, it is not even clear that the set on the right-hand side is non-empty. 
Nonetheless, our next result states that as $d\to\infty$, the set actually contains almost every $p$.

\begin{theorem}\label{t:8x10}
    We have $\lim_{d\to\infty} \wt{\dirpc}(d) = 0$.
\end{theorem}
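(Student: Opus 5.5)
The plan is a multiscale renormalization that compares the model, in high dimension, with a supercritical branching process. The key geometric point is the choice of box shapes. A $8^{-k}\times\cdots\times 8^{-k}\times 10^{-k}$ box contains $8^d\cdot 10$ subboxes at scale $k+1$: a grid of $8^d$ spatially and $10$ in time. A path in $\wt{\Zdir^{d+1}_n}$ moves spatially at speed $8^{-n}/10^{-n}=(10/8)^n$ in rescaled units. Hence at every scale, during the time $10^{-k}$ needed to cross a $k$-box in the time direction, the path can travel spatial distance of order $(10/8)^{n-k}\cdot 8^{-k}\gg 8^{-k}$ when $n-k$ is large. So within a single time layer of a $k$-box it can visit many spatially distinct $(k+1)$-subboxes, and we are free to choose which ones.

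I would therefore define a good event for a $k$-box $B$ recursively. Call $B$ good if it is retained and, in each of its $10$ time layers, there is a retained and good $(k+1)$-subbox that can be chained with the ones in adjacent layers. The chain must be reachable: consecutive chosen subboxes should be spatially within distance $O(8^{-k})$, which the fast spatial movement can cover. I would restrict the choice to a fixed local neighbourhood of, say, the $2d$ spatial neighbours plus the box directly above, so that the number of candidate children in the next layer is of order $d$.

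Let $q_k$ be the probability that a $k$-box is good relative to the final scale $n$, and set $q=\inf_k q_k$. The recursion then resembles survival of a branching process in which each step offers roughly $2d$ independent candidates, each succeeding with probability $p\,q$. The failure probability of a layer is at most $(1-pq)^{cd}$. Requiring all $10$ layers to succeed, together with the box itself being retained, gives
\[
q \ge p\bigl(1-(1-pq)^{cd}\bigr)^{10}.
\]
For fixed $p$ and $d$ large, this fixed-point inequality has a solution $q$ bounded below uniformly in $n$, by a standard monotone iteration starting from $q_n=p$. Concatenating good boxes then produces a lattice path inside $\wt A_n$, which gives $\inf_n\theta>0$ for paths crossing a unit box. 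To reach $M\bS^d$ for all $M$, I would tile larger scales analogously, or use scale invariance plus a second renormalization, consistent with the $\inf_M$ in \eqref{e:8x10-critical-prob}.

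The main obstacle is twofold: making the candidates genuinely independent, and making the connections realizable. On the first point, candidate subboxes in the same layer are disjoint and hence independent. However, the choice in layer $j+1$ depends on the choice in layer $j$, so one must explore sequentially. This is a greedy exploration with fresh boxes at each step, and fresh boxes are available because the subboxes are disjoint. On the second point, the actual lattice path must pass through the chosen subboxes at the finest scale, entering and exiting them at the right points. I expect this to force a slightly stronger inductive hypothesis: "good" should mean that a path exists from any entry point on the bottom face to some point on the top face. Proving this uses the spare spatial speed of order $(10/8)^{n-k}$. For the last few scales near $n$ the speed is only $O(1)$, and those scales need separate handling with a constant-depth argument. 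This costs only a constant factor that is absorbed as $d\to\infty$.

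Finally, I would observe that as $p$ increases, $1-(1-pq)^{cd}\to 1$ as $d\to\infty$ for any fixed $p>0$. This yields $\limsup_d \wt{\dirpc}(d)\le p$ for every $p>0$, which proves the theorem.
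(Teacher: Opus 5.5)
\textbf{There is a genuine gap: your fixed-point recursion is not supported by any well-defined good event.} The inequality $q\ge p\bigl(1-(1-pq)^{cd}\bigr)^{10}$ fails precisely at the gluing step you flag as the main obstacle. Your proposed hypothesis (from any entry point on the bottom face to some point on the top face) breaks in two ways.

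First, with the exit point uncontrolled, the path leaves the chosen layer-$j$ subbox into the subbox directly above it. A lateral transfer at the boundary time $t_{j+1}$ is possible only if, at time $t_{j+1}-10^{-n}$, the path is within $8^{-n}$ of the corresponding spatial face. So the $2d$ lateral candidates are not actually available. The layer-$1$ subbox is also dictated by the entry point, so there is no branching at all.

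Second, the hypothesis essentially never holds. A prescribed point $(w,t)$ on the bottom face of a $k$-box lies in $\widetilde{A}_n$ only if the $n-k$ nested boxes containing it at scales $k+1,\dots,n$ are all retained. So even a single prescribed entry has probability at most $p^{n-k}$, and no lower bound uniform in $n$ is possible. Restricting to retained entry points does not help either: once there are many of them, some are dead ends, since each is one with probability at least $(1-p)^{2d}$.

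The same computation shows your independence claim fails exactly where it is needed. All continuations out of a junction point share those nested boxes, so the candidates are never fresh at the place where two segments are glued. A correct argument has to choose junction points adaptively at all scales simultaneously, which a layer-by-layer greedy exploration cannot do. The spare speed $(10/8)^{n-k}$ does not rescue this: it is a property of the unconstrained lattice, and using it requires passing through retained boxes at every finer scale, which is what has to be proved. The extension to all $M$ by ``tiling larger scales'' inherits the same gluing problem.

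\textbf{How the paper avoids gluing.} It uses a second-moment argument over an explicit path family instead of any recursion.
\begin{itemize}
\item It takes the Ding--Gwynne--Zhuang tube-refinement paths $\mathcal{P}^{n,M}$, which have $\mathfrak{q}-10\asymp d$ nearly disjoint refinement choices per edge at every scale.
\item It lifts them by $t\mapsto(P(t),t)$, giving admissible lattice paths from $\mathbb{S}^d$ to $M\mathbb{S}^d$ (Lemma~\ref{l:lifts-are-admissible}).
\item The key new input is Proposition~\ref{p:boxcount}. Refinements are local and do not backtrack across scales, so a spatial $k$-box met by $P$ meets at most five space-time $k$-boxes of $\widetilde{P}$. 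Hence overlaps of lifted paths are at most five times overlaps of the spatial paths.
\item These spatial overlaps are controlled by the subcritical branching domination (Lemma~\ref{l:domination}). They are also controlled by the geometric tail of $|P_0\cap Q_0|$ with parameter $q_d\to0$ (Lemmas~\ref{l:Y0-geometric} and~\ref{l:return-probability}).
\item Paley--Zygmund then gives a lower bound on the probability of a good path that is uniform in $n$ and $M$.
\end{itemize}
No entry-point property is ever needed, because existence is established globally rather than by exploration.
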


Some remarks are in order.

\begin{remark}\label{choices}
The proof of \cref{t:8x10} can be easily adapted to establish the same result for the model using  boxes of the form 
    \[
        (x,t) + [0, \br^{-k}]^d \times [0,(\br+2)^{-k}],
        \qquad (x,t) \in \br^{-k}\Z^d \times (\br+2)^{-k}\Z
    \]
    for any sufficiently large integer $\br$.
   However, for any percolation probability $p,$ increasing $\br$ (equivalently, decreasing the aspect ratio $\frac{\br+2}{\br}$) increases the minimum dimension in which directed percolation occurs.
\end{remark}

\begin{remark}
        Let us explain the term ``unrectifiable directed percolation.''
        To motivate this we consider the following equivalent construction of $\wt{A}_\infty$.
        Write $\beta:=\log_8(10)$ and consider the scaled $\ell^\infty$ metric on $\R^{d+1}$ given by
        \[
            \rho(z,w)
            :=\max\left\{
                |z_1-w_1|,\dots,|z_d-w_d|, |z_{d+1}-w_{d+1}|^{1/\beta}
            \right\}.
        \]
        Observe that $\wt{A}_\infty$ is the fractal percolation model obtained by defining $A_\infty$ as before, but using $\rho$-balls of diameter $8^{-k}$ instead of  $\ell^\infty$-balls of diameter $8^{-k}$.

        Since $\beta > 1$, changing the metric to $\rho$ has the effect of ``speeding up'' directed paths at small scales. 
        Indeed, consider a directed path in $\wt{\Zdir^{d+1}_n}$ (henceforth a \emph{$\rho$-directed path}).
        Traveling $10^{-n}$ units in the time axis (coordinate $d+1$) forces the $\rho$-directed path to also travel distance $\Theta(8^{-n})$ in space (coordinates $1,\dots,d$).
        In contrast, a Euclidean-directed path in $\Zdir^{d+1}_{8^{-n}}$ is Lipschitz and hence in time $10^{-n}$ only moves distance $\Theta(10^{-n})$.
        However, this microscopic speedup is not alone enough to establish percolation. 
        For example, a uniform limit of $\rho$-directed paths can be a Lipschitz path, which cannot percolate (as indicated earlier).

        To prove Theorem \ref{t:8x10}, we construct, for each $n\ge 1$, a family of $\rho$-directed paths $\{\Path_n\}$
        that move exponentially faster than Euclidean-directed paths at \emph{every scale}.
        Namely, for every $\Path_n$ and every $0\le k\le n$, traveling $10^{-k}$ units of time causes a $\Theta(8^{-k})$ spatial displacement.
        Any uniform limit of such paths $\Path_n$ as $n\to\infty$ must be unrectifiable---hence the name ``unrectifiable directed percolation.''
\end{remark}

\begin{remark}
    Theorem \ref{t:8x10} leaves open the intriguing question of determining whether unrectifiable directed fractal percolation occurs in low dimensions. 
    In dimension $d=2$, this was asked previously in \cite[Question 7.7]{KTRecentProgressFractal2025}. 
    In fact, it is not too difficult to see that planar fractal percolation using boxes of size $\br_1^{-k}\times \br_2^{-k}$ for any integers $\br_1,\br_2 \ge 2$ always fails to exhibit directed connectivity, essentially by the same argument as in \cite{ChaAbsenceDirectedFractal1995}.
    However, it is not unreasonable to speculate  that one should be able to construct unrectifiable directed percolation in dimensions three and higher by appropriately choosing the side lengths.
    Heuristically, this is because in the planar case (dimension $1+1$) where the spatial dimension is $1$, changing the aspect ratio cannot increase the size/dimension of the projection of a directed path onto the spatial coordinates, whereas this is not the case in dimension $d+1$ for $d\ge 2$.
    The reader is also referred to \cite[Section 3.3]{KTRecentProgressFractal2025} for a review of geometric and dynamical properties of fractal percolation using boxes of different aspect ratios.
\end{remark}

\subsection{LPP on the Gaussian free field (GFF)}\label{s:LPP-on-GFF}

We now elaborate on the connection between LPP on fractal percolation and LPP on the 2D GFF.
For simplicity, we actually explain the connection between fractal percolation and the \emph{branching random walk} (BRW), a canonical hierarchical approximation to the 2D GFF.

Fixing a dimension $d\ge 2$ and $n\ge 1$, the BRW is the Gaussian process $X_n$ on $\R^d$ defined as follows.
For every $k\ge 0$ and every $2^{-k}$-box $B$ as in \eqref{e:k-boxes}, we assign an i.i.d. standard Gaussian random variable $\xi_B$.
Then for $x\in \R^d$, the BRW's value at $x$ is 
\begin{equation}\label{brwdef345}
X_n(x) = \sum_{k=0}^{n-1} \xi_{B_k(x)},
\end{equation} where $B_k(x)$ is the unique $2^{-k}$-box containing $x$ (we fix some deterministic rule to break ties in case $x$ lies on the boundary between multiple boxes).

In \cite{GGNLastPassagePercolation2026} the first two named authors and Kyeongsik Nam studied LPP on the BRW and other hierarchical environments, proving that in any dimension $d\ge 2$, with high probability the last passage time on the BRW $X_n$ satisfies (see \cite[Theorem 3]{GGNLastPassagePercolation2026})
\begin{equation}\label{lb342}
    L_n^{\mrm{BRW}} \gs \frac{2^n n^{1/2}}{\log n}.
\end{equation}
An upper bound of $L_n^{\mrm{BRW}}\ls 2^n n$ follows from the well-known fact that the maximum of the BRW is of order $n$ in any dimension \cite{ZeiBranchingRandomWalks2016}.
While the precise exponent governing the $\mathrm{poly}(n)$ pre-factor remains unknown, it is not unreasonable to speculate that the lower bound is sharp, for reasons that will become clearer through the course of this discussion.

To probe the sharp behavior, it is natural to investigate how extrema of the BRW contribute to the last passage time.
As mentioned, the maximum of the BRW grows as $cn(1+o(1))$ for some $c=c(d)>0$ (in fact, $c = \sqrt{2d \log 2}$ \cite[Theorem 7]{ZeiBranchingRandomWalks2016}).
For $\alpha \in (0,c]$, we say that a point $x$ is \emph{$\alpha$-thick} if $X_n(x) \ge \alpha n$.
Let $\Gamma_n(\alpha)$ be the set of $\alpha$-thick points in $[0,1]^d$. Understanding $L_n^{\mrm{BRW}}$ or its GFF counterpart thus naturally warrants studying LPP on the thick point set $\Gamma_n(\alpha)$.
Now, studying LPP on fractal percolation is a natural first step towards this, since fractal percolation is a simple proxy for the thick point set.
Indeed, consider the subset of thick points
\begin{align*}
    S_n(\alpha) 
    = \left\{
        x \in [0,1]^d : \xi_{B_k(x)}
        \ge \alpha\textup{ for all } 0\le k \le n-1
    \right\}
    \subset \Gamma_n(\alpha).
\end{align*}
Then $S_n(\alpha)$ is precisely the fractal percolation model $A_{n-1}(d,2)$ with retention probability
\footnote{For simplicity we ignore the possibility of $x$ lying on the boundary between multiple $2^{-k}$-boxes.}
\begin{align*}
    p = \P(\xi \ge \alpha).
\end{align*}

At the same time as \cite{GGNLastPassagePercolation2026}, Ding, Gwynne, and Zhuang \cite{DGZPercolationThickPoints2026} independently studied high-dimensional aspects of a closely-related problem
(their work will actually play a key role in our proof of Theorem \ref{t:8x10}).
One of their main results was that for any $\alpha>0$ and for all sufficiently large dimension $d$, the set of $\alpha$-thick points contains an unbounded path almost surely (in fact, this is a corollary of their proof of Theorem \ref{t:dgz-pc}).
However, this path is far from Lipschitz/directed, having Hausdorff dimension $\log_8(10)>1$. 
Note that if there were a \emph{directed} crossing of the thick point set $\Gamma_n(\alpha)\cap [0,1]^d$ with constant probability, it would follow that $\E[L_n^{\mrm{BRW}}]\asymp 2^n n$, which by the concentration result \cite[Theorem 4]{GGNLastPassagePercolation2026} would imply that $L_n^{\mrm{BRW}}\asymp 2^n n$ with high probability.
However, we expect that this is not the case.
In fact, it is not unnatural to extrapolate from the proof strategy for Theorem \ref{t:sublinear} to conjecture that for any $\gamma>1/2$, the maximum number of points of value $n^{\gamma}$ on any directed path is at most $2^n e^{- n^{\beta}}$ for some $\beta=\beta(\gamma)>0$, which would then imply the sharpness of the lower bound in \eqref{lb342} up to sub-$\mathrm{poly}(n)$ factors.
Not unrelated to the above theme,  percolative properties of GFF level sets have been intensively studied as a canonical model of a dependent percolation with algebraic decay of correlation, see \cite{DLChemicalDistancesPercolation2018,DWPercolationLevelsetsGaussian2020,APPThickPointsPlanar2023}.\\

We next move on to a brief overview of the key ideas in the proofs of Theorems \ref{t:sublinear} and \ref{t:8x10}.

\subsection{Proof ideas}\label{s:iop}

We first outline the proof of Theorem \ref{t:sublinear}.
The broad idea is to show that with high probability, for any directed path $\pi$, it gets progressively fractured as one reveals the percolation environment, scale by scale. Suppose the path loses a constant fraction (say $1\%$) of its \emph{remaining} length due to the deletion of $2^{-k}$-boxes, for every $1\le k \le n$.
Indeed, this would readily imply that
\begin{align*}
    L_n \le 0.99^{n}\cdot 2^{n+1}.
\end{align*}
In the actual proof, for technical reasons stemming from the necessity of uniform concentration estimates, we require a bit of scale separation, and only establish this length loss along a slightly sparse sequence of scales $k_1<k_2<\cdots<k_M$ with gaps $k_{i+1}-k_i \asymp \log n$.
This then leads to a bound of
\begin{align*}
    L_n \le 0.99^{\Theta(n/\log n)} \cdot 2^{n+1},
\end{align*}
which is precisely Theorem \ref{t:sublinear}.
Our argument establishing that a constant fraction of length is lost per scale consists broadly of two steps.\\

\noindent
\textbf{Renormalization.} The first step is to show that for fractal percolation of depth $k\gg 1$ and a large constant $\ell$, with probability tending to $1$ as $k\to\infty$, any directed path loses at least a $2^{-\ell}$-fraction of its total length \emph{just from the deletion of boxes at scales $k-\ell,k-\ell+1,\dots,k$.}
In essence, this amounts to showing that  with high probability,
\begin{align}\label{e:one-scale-iop}
    L_{k-\ell,k} \le \left(1 - \frac{C}{2^\ell}\right)\cdot 2^{k+1},
\end{align} 
where $L_{k-\ell,k}$ is the last passage time in the fractal percolation configuration defined by only deleting boxes of size $2^{-(k-\ell)}, 2^{-(k-\ell+1)},\dots,2^{-k}$.
We prove \eqref{e:one-scale-iop} using a renormalization argument, which we now sketch a simplified version of.
For a $2^{-(k-\ell)}$-box $\msf{b}$, we say that $\msf{b}$ is \Bad if for every directed path $\pi$ in $2^{-k}\Z^2$ with $\pi \cap \msf{b} \ne \varnothing$, there is at least one deleted $2^{-k}$-box contained in $\pi\cap \msf{b}$.
Roughly speaking, since directed fractal percolation fails almost surely \cite{ChaAbsenceDirectedFractal1995}, we have
\begin{align}\label{e:iop-badness}
    \P(\msf{b} \text{ is \Bad}) \xrightarrow{\ell\to\infty} 1,
\end{align}
and the convergence is uniform in $k$ and $\msf{b}$.
To prove \eqref{e:one-scale-iop}, it suffices to show that with high probability, a constant fraction of $2^{-(k-\ell)}$-boxes along any directed path are \Bad.
Write $\overline{p} := \P(\msf{b} \text{ is not \Bad})$.
Observe that distinct $2^{-(k-\ell)}$-boxes are \Bad independently of each other.
Thus, identifying the set of $2^{-(k-\ell)}$-boxes with a copy of $2^{-(k-\ell)}\Z^2$, 
the task is essentially equivalent to showing that in the i.i.d. $\mathrm{Bernoulli}(\overline{p})$ environment given by $\omega(\msf{b}) := \1_{\msf{b}\text{ is not \Bad}}$, with high probability the last passage time is at most $c2^{k-\ell+1}$ for some $c<1$. It is known, for instance by coupling to a suitable Poisson process, that the last passage time  has an exponential tail at scale $O(\overline{p}^{1/2}2^{k-\ell})$ (see for instance \cite{MarLinearGrowthGreedy2002}). 
Since $\overline{p}\to 0$ as $\ell\to\infty$, we can choose $\ell$ large enough that the above implies a constant fraction of length decay, as desired.\\

\noindent
The actual implementation is nonetheless complicated by lattice effects and other subtleties pertaining to the geometry of directed paths. To overcome this, we actually work with an auxilliary locally dependent fractal percolation model that will be defined in the proof.
\\

\noindent
\textbf{Multiscale analysis.} The second step is to then iterate \eqref{e:one-scale-iop} at every scale.
We start with a directed path $\pi$ in $2^{-n}\Z^2$, and argue using \eqref{e:one-scale-iop} that a constant fraction of its length is lost from scales $k_1-\ell,k_1-\ell=1,\dots,k_1$, where $\ell \le k_1 < n$ (here we use that $\ell$ is just a constant).
After accounting for these scales, the path $\pi$ is fragmented into a  collection of shorter paths.
We then proceed to a finer scale $k_2 >  k_1$ and apply \eqref{e:one-scale-iop} to each of the shorter paths.
We iterate this until we finally reach scale $n$.
As mentioned, we require a scale separation of $k_{i+1}-k_i \asymp \log n$.
While the reasoning behind this precise separation will be made clear in the proof, let us already remark that a growing separation is required to ensure that the exponential tail bound on the passage time in the auxilliary Bernoulli environments constructed in the first step is enough to control the passage times uniformly over all possible  locations of the fragmented path segments.\\

\noindent
\textbf{Unrectifiable LPP.} The main realization behind  Theorem  \ref{t:8x10} is that if one can construct 
undirected connecting paths in fractal percolation, one can then perhaps \emph{lift} them to one higher dimension in an appropriate way to create percolating directed paths (see Figure \ref{fig:lattice-lift}).
Since undirected connections must be tortuous (see Theorem \ref{t:chayes-chemical-geodesic}), one can see, say, via dimension considerations, that for this strategy to be successful the path must move with unequal speeds in different coordinates eventually yielding an un-rectifiable path.
It turns out to be convenient to construct undirected connections whose tortuosity can be controlled at every scale.
Multi-scale constructions of paths had appeared earlier, for instance, in \cite{DGUpperBoundsLiouville2019,GGNLastPassagePercolation2026} and \cite{DGZPercolationThickPoints2026}. For our particular application the framework in the latter paper seems particularly convenient, which we adopt to prove Theorem \ref{t:dgz-pc}. 
We postpone further details to Section \ref{s:8x10}.

\subsection{Organization of the paper}

In Section \ref{s:general-notation} we record notation that will be in use throughout the paper.
In the rest of Section \ref{s:preliminaries}, we set up additional notation and basic inputs needed for the proof of Theorem \ref{t:sublinear}.
The latter is presented in Section \ref{s:sublinear}.
Theorem \ref{t:8x10} is proved in Section \ref{s:8x10}.
In Appendix \ref{a:easycross} we prove a crossing estimate used in the proof of Theorem \ref{t:sublinear}.
Finally, in Appendix \ref{a:return-prob} we prove a lemma on high-dimensional behavior of a certain random walk return probability used in the proof of Theorem \ref{t:8x10}.

\addtocontents{toc}{\SkipTocEntry}
\subsection*{Acknowledgements}

SG was supported by a Miller Research Professorship at the
Miller Institute for Basic Research in Science, NSF Career grant-1945172 and NSF grant-2553225.
VG was supported by NSF Career grant-1945172, and the NSF
Graduate Research Fellowship Program under Grant No. DGE-2146752.
KJ was supported by NSF Career grant-1945172 and the Citadel Securities Ph.D. Fellowship.

\addtocontents{toc}{\SkipTocEntry}
\subsection*{AI use disclosure}

A large language model was used to generate Figure \ref{fig:lattice-lift} and \LaTeX\  code for the symbol $\boxup$.

\section{Elements of fractal percolation and a priori estimates}\label{s:preliminaries}
In this section, we introduce some further notation and recall previously used notation, set up definitions, and record various results that will be used throughout the proof of Theorem \ref{t:sublinear}.

\subsection{General notation}\label{s:general-notation}
Throughout the paper we denote by $C$, $c$, etc., finite positive constants that do not depend on any parameter unless we clarify the way they depend on parameters and variables.
Uppercase constant $C$ may increase from one line to the next, and lowercase $c$ may decrease from one line to the next.
For a finite set $A$, we denote by $|A|$ the cardinality of $A$.
For any point $a\in \bR^{2}$, the set with $a$ as the single element is written as $a= \{a\}$. We will denote by $a_1$ and $a_2$ its two coordinates respectively.
For real numbers $s<t$ we write $\lb s,t\rb := [s,t]\cap\Z$.
For $d\ge 1$, we denote by $\dist$ the $\ell^1$-metric on $\R^d$, i.e. $\dist(x,y) = \sum_{i=1}^d |x_i-y_i|$.
For $x\in\R^d$ and $A\subset \R^d$, we denote $\dist(x,A):=\inf_{y\in A}\dist(x,y)$.

\subsection{Planar geometry notation}

For $k\in\Zpos$, we denote by $\Lambda_k$ the collection of all faces of $2^{-k}\bZ^2$, i.e.,
\begin{align*}
    \Lambda_{k}:= 
    \left\{x + [0,2^{-k})^{2} : x \in 2^{-k}\Z^{2}\right\}.
\end{align*}
We view $\Lambda_k$ as the dual lattice of $2^{-k}\bZ^2$, and refer to elements of $\Lambda_k$ as \emph{boxes}.
Later, we will encounter situations requiring relations between subsets $A \subset \Lambda_k$ and $B\subset \Lambda_{k'}$ for $k \geq k' \ge 0$. In such cases, we adopt a slight abuse of notation by introducing the following convention: the intersection $A\cap B$ means both the set-theoretic intersection of $A$ and $B$ viewed as subsets of $\R^2$, as well as the collection of boxes in the finer lattice $\Lambda_k$ that are contained in the former set-theoretic intersection. Formally, we define
\[
    A \cap B := \{\su \in \Lambda_k: \su \subset A \text{ and }\su \subset B\},
\]
where $\su\subset A$ (resp. $\su \subset B$) is understood by viewing $A$ and $B$ as subsets of $\bR^2$. 
Similarly, we write $\su \in B$ to mean that $\su \subset B$ when $\su$ and $B$ are viewed as subsets of $\bR^2$. 
Note that this avoids ambiguity: any subset of the coarser lattice $\Lambda_{k'}$ can be uniquely identified with a subset of $\bR^2$, which in turn admits a canonical partition into elements of the finer lattice $\Lambda_k$.
The same convention also applies to other set operations such as union $\cup$ and difference $\setminus$.

For any box $\su = x + [0,2^{-k})^{2}\subset \R^2$, we denote by $\mathsf{bl}(\su)$, $\mathsf{tr}(\su)$, and $\cen(\su)$ the bottom-left corner, top-right corner, and center of $\su$ respectively, i.e.
\begin{align*}
    \mathsf{bl}(\su)&:= x;\\
    \mathsf{tr}(\su)&:= x + (2^{-k},2^{-k});\\
    \cen(\su) &:= x + (2^{-k-1},2^{-k-1}). 
\end{align*}
For $a=(a_1,a_2)$, $b=(b_1,b_2)\in \R^2$, we write $a\preceq b $ if $a_i \le b_i$ for $i=1,2$, and $a\prec b$ if $a_i <b_i$ for $i=1,2$. 
For any boxes $\su, \sv \in \Lambda_k$, we write $\su \preceq \sv$ if $\cen(\su) \preceq \cen(\sv)$.
For any $k\in \Z_{\ge 1}$ and any box $\su\in \Lambda_{k}$, we denote by $\su^{\boxup}$ the box of size $2^{-k+1}\times 2^{-k+1}$ with $\bl(\su^{\boxup})=\bl(\su)$, i.e., the union of the four boxes in the superscript with the shaded box being $u$. 
That is, 
\begin{align}\label{e:boxup}
    \su^{\boxup} := 
    \bl(\su)
    + [0,2^{-k + 1})^{2}.
\end{align}
Equivalently, identifying $\su^{\boxup}$ with the four boxes in $\Lambda_{k}$ that cover it, we can write
\begin{align}\label{e:boxup-collection}
    \su^{\boxup} = 
     \left\{
        \su,\;\;
        \su + (2^{-k},0),\;\;
        \su + (2^{-k}, 2^{-k}),\;\;
        \su + (0, 2^{-k})
    \right\}.
\end{align}
{This identification will indeed turn out to be convenient in setting up some of the later notations that our arguments will rely on.}
For any subset $A\subset \bR^{2}$, we denote by $\Size_{k}(A)$ the number of boxes in $\Lambda_{k}$ contained in $A$, i.e. 
\begin{align}\label{e:size}
    \Size_{k}(A) := |\{ \su\in \Lambda_{k}: \su \subset A\}|.
\end{align}

\subsection{Fractal percolation}
Although fractal percolation was already defined in Section \ref{s:intro-fractal-percolation}, for our arguments we will need more {flexible} notation, which we now set up.
For any $k\in \Z_{\ge 1}$, let $\{0,1\}^{\Lambda_k}$ be the set of percolation configurations on $\Lambda_{k}$.
For $p\in(0,1)$, we denote by $\omega_{k}^{p}\in \{0,1\}^{\Lambda_k}$ the percolation configuration of the $k\th$ level of fractal percolation, i.e. 
$\omega_k^p = \{\omega_{k}^{p}(\su)\}_{\su \in \Lambda_{k}}$ where $\omega_{k}^{p}(\su)$s are i.i.d. Bernoulli random variables with parameter $p\in (0,1)$.
We denote the law of $\omega_k^p$ by {$\bP_{p}$}.
When the choice of $p$ is understood from context, we drop the superscript $p$ and just write $\omega_{k}$, $\omega_{k}(\su)$ and $\bP$.
{\textit{With a slight abuse of notation (which presents no risk of confusion), we also use $\omega_{k}$ to represent the union of boxes $\su$ in $\Lambda_{k}$ with $\omega_{k}(\su) = 1$ as a subset of $\bR^{2}$, i.e.}}
\begin{align*}
    \omega_{k}:= \bigcup_{\su\in \Lambda_{k}:\omega_{k}(\su)=1} \su.
\end{align*}
The fractal percolation configuration of depth $k$ is defined as $\sigma_{k} := \bigcap_{0< i\le k} \omega_{k}\subset \bR^{2}$.
Equivalently, for any $\su \in \Lambda_k$,
\begin{align*}
\sigma_{k}(\su)  :=
        \begin{cases}
            1, & \text{if $\omega_{i}(\sv) = 1$ for all $0< i\le k$ and $\sv \in\Lambda_{i}$ with $\su \subset \sv$;}\\
            0, & \text{otherwise.}
        \end{cases}
\end{align*}
We call a box $\su$ \emph{retained} if $\sigma_{k}(\su)=1$ and \emph{deleted} if $\sigma_k(\su)=0$.
For $k>k'\ge 0$, we define $\sigma_{k',k}:= \bigcap_{k'<i\le k} \omega_{i}\in \bR^{2}$.

\begin{remark}\label{r:half-open}
    Note that instead of defining fractal percolation using half-open boxes of the form $x+[0,2^{-k})^{2}$, the standard definition typically uses closed boxes of the form $x+[0,2^{-k}]^{2}$. 
    The fact that half-open boxes partition the plane will be notationally convenient for us, although our arguments can be easily adapted to handle closed boxes.
\end{remark}

\subsection{Directed paths and last passage percolation}

\begin{definition}[Directed paths]
    For any $k\in \Zpos$, a \emph{directed path} $\pi_k$ on $\Lambda_{k}$ is a sequence of boxes $\pi_{k} = (\su_{1},\cdots ,\su_{t})$, where $\su_{i} \in \Lambda_{k}$ for all $i\in\lb 1,t\rb$ and $\cen(\su_{i+1}) - \cen(\su_{i}) \in \{(2^{-k},0),(0,2^{-k})\}$ for all $i\in\lb 1,t-1\rb$.
   {Similar to  how $\omega_k$ can be used to represent both the configuration and the corresponding subset of $\R^2$, we also use $\pi_{k}$ to represent the union of boxes on $\pi_{k}$ as a subset of $\R^{2}$.}
    
    For any $a,b\in \R^2$, we say that a directed path $\pi_{k}=(\su_{1},\cdots ,\su_{t})$ \emph{connects $a$ and $b$} if $a\in \su_{1}$ and $b\in \su_{t}$.
    We denote by $\Pi_{k}(a,b)$ the collection of all directed paths on $\Lambda_{k}$ connecting $a$ and $b$.    
\end{definition}

\begin{definition}[Last passage time]\label{d:lpp}
For any directed path $\pi_{k}$ on $\Lambda_{k}$ and any percolation configuration $\tau_{k} \in \{0,1\}^{\Lambda_k}$, we define the \emph{passage time} of $\pi_{k}$ with respect to $\tau_{k}$ as 
\begin{align}\label{e:passage time}
    N_{\tau_{k}}(\pi_{k}):= \sum_{\su\in \pi_{k}} \tau_{k}(\su) = \Size_{k}(\pi_{k}\cap \tau_{k}).
\end{align}
For $a,b\in\R^2$, we define the \emph{last passage time} from $a$ to $b$ with respect to $\tau_{k}$ as
\begin{align}\label{e:lpp}
    L_{\tau_{k}}(a,b):= \max_{\pi_{k}\in \Pi_{k}(a,b)} N_{\tau_{k}}(\pi_{k}).
\end{align}
\end{definition}

Notice that for $\tau_k = \sigma_k$, the above is equivalent to \eqref{e:intro-LPP}, since a directed path $\pi_k = (\su_1,\dots,\su_t)$ in $\Lambda_k$ is naturally identified with the directed path $(\bl(\su_1),\dots,\bl(\su_t))$ in the sense of Definition \ref{def:up-right}.

\subsection{Geometry of directed paths}
In this subsection we discuss elementary geometric properties of directed paths.
{The proofs of the following geometric lemmas are straightforward from the definition. Thus, we just record the results here and omit the proofs. }

The first lemma compares the length of a directed path to the $\ell_{1}$-distance between its endpoints, where the latter is denoted by $\dist(\cdot,\cdot)$ (see Section \ref{s:general-notation}).

\begin{lemma}\label{length of path}
    For any $k\in \Zpos$ and any directed path $\pi_{k}= (\su_{1},\cdots, \su_{t})$ on $\Lambda_{k}$, we have 
    \begin{align*}
  {      t = \Size_{k}(\pi_{k})}
        &= 2^{k} \cdot \dist(\mathsf{bl}(\su_{1}),\mathsf{tr}(\su_{t})) -1\\
        &= 2^{k} \cdot \dist(\cen(\su_{1}),\cen(\su_{t})) +1.
    \end{align*}
    Moreover, if $\pi_{k}$ connects $a,b \in \bR^{2}$ with $a \prec b$, then 
    \begin{align*}
      2^{k} \cdot \dist(a,b) -1
      \le  \Size_{k}(\pi_{k})
      \le 2^{k}\cdot \dist(a,b) + 3.
    \end{align*}
\end{lemma}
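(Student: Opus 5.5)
The plan is a direct count of steps along the path, splitting the statement into an exact identity and a bound that only costs an endpoint error.

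\textbf{Step 1: the exact identity via centers.} Each step of a directed path changes the center by $(2^{-k},0)$ or $(0,2^{-k})$. Hence the $\ell^1$-norm of the displacement grows by exactly $2^{-k}$ per step, and both coordinates are nondecreasing. Summing over the $t-1$ steps gives
\[
\dist(\cen(\su_1),\cen(\su_t))=(t-1)2^{-k}.
\]
This is the second identity. Since the $\su_i$ are distinct (coordinates strictly increase in $\ell^1$), $t=\Size_k(\pi_k)$.

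\textbf{Step 2: the identity via corners.} We have $\bl(\su_1)=\cen(\su_1)-(2^{-k-1},2^{-k-1})$ and $\tr(\su_t)=\cen(\su_t)+(2^{-k-1},2^{-k-1})$. Also $\cen(\su_1)\preceq\cen(\su_t)$ coordinatewise, so the $\ell^1$ distances simply add. Thus
\[
\dist(\bl(\su_1),\tr(\su_t))=\dist(\cen(\su_1),\cen(\su_t))+2\cdot 2^{-k}.
\]
Multiplying by $2^k$ and using Step 1 gives $2^k\dist(\bl(\su_1),\tr(\su_t))-1=t$, as claimed.

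\textbf{Step 3: the two-sided bound.} Suppose $a\in\su_1$ and $b\in\su_t$ with $a\prec b$. Since the boxes are half-open, $a=\bl(\su_1)+\delta$ and $b=\tr(\su_t)-\delta'$ with $\delta,\delta'\in(0,2^{-k}]^2$ componentwise, after allowing boundary adjustment.

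For the lower bound, $\bl(\su_1)\preceq a$, and $b\prec\tr(\su_t)$ coordinatewise. The order relations needed to add distances hold: $\bl(\su_1)\preceq\tr(\su_t)$ always, and the inequality $\dist(a,b)\le\dist(\bl(\su_1),\tr(\su_t))$ follows from the triangle inequality coordinatewise, since $a_i,b_i\in[\bl(\su_1)_i,\tr(\su_t)_i]$. This gives $2^k\dist(a,b)-1\le t$.

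For the upper bound, $|a_i-\cen(\su_1)_i|\le 2^{-k-1}$ and likewise for $b$. Hence
\[
\dist(\cen(\su_1),\cen(\su_t))\le\dist(a,b)+4\cdot2^{-k-1}=\dist(a,b)+2\cdot 2^{-k}.
\]
Therefore $t\le 2^k\dist(a,b)+3$.

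The only delicate point is checking that the coordinates of $a,b$ lie in the interval spanned by the relevant corners. This holds because $a_i\ge\bl(\su_1)_i$, $b_i<\tr(\su_t)_i$, and $a\prec b$ places both between the corner values. Everything else is routine bookkeeping.
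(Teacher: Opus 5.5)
Your proof is correct and is exactly the routine verification the paper has in mind; the paper omits the proof as straightforward from the definitions. You count steps via centers, shift by half a box to get the corner identity, and bound $\dist(a,b)$ from above by $\dist(\bl(\su_1),\tr(\su_t))$ and from below via the $2^{-k-1}$ coordinatewise proximity of $a,b$ to the centers. One harmless slip: for $a\in\su_1$ the offset satisfies $a-\bl(\su_1)\in[0,2^{-k})^2$, not $(0,2^{-k}]^2$, but you never actually use that representation.
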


The next lemma states a non-backtracking property of directed paths.
\begin{lemma}\label{nonbacktracking}
    For every $k\in\Zpos$ and every directed path $\pi_{k}= (\su_{1},\cdots,\su_{t})$ on $\Lambda_{k}$, the following holds.
    Fix any dyadic box $\sa \in \bigcup_{i\ge 0}\Lambda_i$ such that $\pi_{k}\cap  \sa \neq \varnothing$,   and let
    \[
        t_0:= \min\{ i\ge 1: \su_{i}\cap \sa \neq \varnothing\},
        \qquad 
        t_1:=\inf\{ i \ge t_{0}: \su_{i}\cap \sa = \varnothing\}.
    \]
    Then we have
    \begin{align*}
      \left(\bigcup_{t_1 < i \le t} \su_{i} \right)\cap  \sa
      =\varnothing,
    \end{align*}
    with the convention that if $t_1=\infty$ then the union equals $\varnothing$. Thus, in words, once a path leaves a box, it never returns to it.
\end{lemma}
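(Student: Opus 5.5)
The plan is to use monotonicity of directed paths coordinatewise, together with the product structure of a dyadic box. Write $\sa = y + [0,2^{-j})^2 = I_1\times I_2$ with $I_1=[y_1,y_1+2^{-j})$ and $I_2=[y_2,y_2+2^{-j})$. If $j\le k$, every box $\su\in\Lambda_k$ is either contained in $\sa$ or disjoint from it. If $j>k$, the box $\sa$ lies inside a unique $\su\in\Lambda_k$. The second case is immediate: at most one box of $\pi_k$ meets $\sa$ (the $\su_i$ are distinct, since the centers strictly increase in $\preceq$). That box is $\su_{t_0}$, so $t_1=t_0+1$ and no later box meets $\sa$. So the main case is $j\le k$, where the condition $\su_i\cap\sa\ne\varnothing$ means $\su_i\subset\sa$, which in turn means $\bl(\su_i)_1\in I_1$ and $\bl(\su_i)_2\in I_2$.

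Set $x_i:=\bl(\su_i)_1$ and $y_i:=\bl(\su_i)_2$. By the definition of a directed path, both sequences $(x_i)$ and $(y_i)$ are nondecreasing in $i$, and each step increases exactly one of them by $2^{-k}$. The key observation is that the set of indices $\{i : x_i\in I_1\}$ is a discrete interval of consecutive integers. Indeed, $(x_i)$ is nondecreasing and $I_1$ is an interval, so the preimage of $I_1$ under $i\mapsto x_i$ is convex in $\lb 1,t\rb$. The same holds for $\{i: y_i\in I_2\}$. Hence
\[
\{i: \su_i\subset \sa\}=\{i: x_i\in I_1\}\cap\{i: y_i\in I_2\}
\]
is an intersection of two discrete intervals, and is therefore a discrete interval.

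From here the conclusion follows directly. By definition, this interval starts at $t_0$. Since $t_1$ is the first index after $t_0$ outside the set, the interval is exactly $\lb t_0,t_1-1\rb$ (or $\lb t_0,t\rb$ if $t_1=\infty$). Consequently every $i\ge t_1$ satisfies $\su_i\cap\sa=\varnothing$, and in particular this holds for all $i>t_1$, which is the claim.

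There is no real obstacle here. The only point needing care is the scale comparison in the first paragraph: when $j\le k$, meeting $\sa$ is equivalent to being contained in it, which uses the nestedness of half-open dyadic boxes (Remark \ref{r:half-open}). Without that equivalence, the convexity argument would have to be run on the intervals $[x_i,x_i+2^{-k})$ intersecting $I_1$, which is still monotone in $i$ and works just as well.
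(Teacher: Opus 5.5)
Your argument is correct: coordinatewise monotonicity of $\bl(\su_i)$ makes $\{i:\su_i\cap\sa\neq\varnothing\}$ an intersection of two discrete intervals, hence a discrete interval beginning at $t_0$. Your separate treatment of boxes $\sa$ finer than $\Lambda_k$ (or, equally, your closing remark, which handles both cases at once) covers the remaining case. The paper omits this proof as straightforward from the definitions, and your argument is exactly the routine verification it has in mind.
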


Our last lemma on directed path geometry concerns restrictions of directed paths to coarser lattices.
For any integers $k>k'\ge 0$ and any directed path $\pi_{k}= (\su_{1},\cdots,\su_{t})$ on $\Lambda_k$,
we define the \emph{projection} of $\pi_{k}$ onto level $k'$ as
\begin{align}\label{e:proj}
    \Proj_{k'}(\pi_{k}) 
    :=
    \left\{\sv\in\Lambda_{k'} : \su_i \subset \sv\text{ for some $i\in\lb 1,t\rb$}\right\}.
\end{align}
That is, $\Proj_{k'}(\pi_k)$ is the set of boxes in $\Lambda_{k'}$ through which $\pi_k$ passes.
We label these boxes as $\Proj_{k'}(\pi_k)=\{\sv_1,\dots,\sv_m\}$ according to the order in which $\pi_k$ first visits them (so $\su_1\subset\sv_1$).
Note that by directedness of $\pi_k$, we must have $\sv_1\preceq\cdots\preceq \sv_m$, i.e. $\Proj_{k'}(\pi_k)$ is a directed path on $\Lambda_{k'}$.
Here we are implicitly using the fact that since lattice paths cannot directly jump from the top-right corner of a box to the bottom-left corner of the diagonally-adjacent box, projecting indeed produces a lattice path.
We summarize this discussion in the following lemma.
\begin{lemma}\label{projection of path}
    For any integers $k>k'\ge 0$, any $a,b\in \bR^{2}$ with $a\prec b$, and any directed path $\pi_{k}$ on $\Lambda_{k}$ connecting $a$ and $b$, the projection $\Proj_{k'}(\pi_{k})$ is a directed path on $\Lambda_{k'}$ connecting $a$ and $b$.
\end{lemma}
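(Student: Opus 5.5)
We argue directly from the definitions, in three steps. Write $\pi_k=(\su_1,\dots,\su_t)$ and, for each $i$, let $\sv(i)\in\Lambda_{k'}$ be the unique box of $\Lambda_{k'}$ containing $\su_i$; such a box exists because $\Lambda_{k'}$ partitions $\R^2$ into half-open boxes, and each box of $\Lambda_k$ lies inside exactly one box of $\Lambda_{k'}$ since $k>k'$. Then $\Proj_{k'}(\pi_k)=\{\sv(i):i\in\lb 1,t\rb\}$.

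\textbf{Step 1 (consecutive boxes).} We show that for every $i$, either $\sv(i+1)=\sv(i)$ or $\cen(\sv(i+1))-\cen(\sv(i))\in\{(2^{-k'},0),(0,2^{-k'})\}$. Suppose $\cen(\su_{i+1})-\cen(\su_i)=(2^{-k},0)$; the other case is symmetric. Then $\su_i$ and $\su_{i+1}$ have the same second coordinate, so their enclosing coarse boxes share the same vertical index. Their first coordinates differ by $2^{-k}$. Since $2^{-k}\le 2^{-k'}$ and the grid $2^{-k}\Z$ refines $2^{-k'}\Z$, the coarse horizontal index either stays the same or increases by exactly one. This is the step that rules out diagonal jumps: a single fine step changes only one coordinate, so the coarse box cannot move diagonally.

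\textbf{Step 2 (the ordered sequence is a directed path).} The map $i\mapsto\cen(\sv(i))$ is coordinatewise nondecreasing by Step 1. Hence once the path leaves a coarse box it never returns to it; this is also the content of Lemma \ref{nonbacktracking}. So the distinct values of $\sv(i)$, listed in order of first visit, form a sequence $\sv_1,\dots,\sv_m$ whose consecutive elements differ by a single unit step of $\Lambda_{k'}$. By definition, $(\sv_1,\dots,\sv_m)$ is therefore a directed path on $\Lambda_{k'}$, and $\sv_1\preceq\cdots\preceq\sv_m$.

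\textbf{Step 3 (endpoints).} Since $\pi_k$ connects $a$ and $b$, we have $a\in\su_1\subset\sv_1$ and $b\in\su_t\subset\sv(t)=\sv_m$, the last equality holding by monotonicity. So the projected path connects $a$ and $b$. The hypothesis $a\prec b$ is not needed beyond ensuring the path is nontrivial.

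The only point requiring care is the nesting fact used in Step 1, namely that a fine unit step crosses at most one coarse grid line, in a single coordinate. This follows because the half-open dyadic boxes are nested and each coordinate index changes by $0$ or $1$. Everything else is bookkeeping.
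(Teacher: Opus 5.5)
Your proof is correct and matches the reasoning the paper sketches just before the lemma: directedness of $\pi_k$ makes the projected boxes coordinatewise monotone, so no coarse box is revisited, and since a single fine step changes only one coordinate, consecutive distinct coarse boxes are unit neighbours rather than diagonal ones. You have simply written out the index bookkeeping (each fine step moves the coarse index by $0$ or $1$ in one coordinate) that the paper leaves implicit; one minor quibble is that the hypothesis $a\prec b$ is not needed at all, since a single coarse box already counts as a directed path.
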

As usual, we  abuse notation by identifying $\Proj_{k'}(\pi_k)$ with the union of the boxes comprising it.

\subsection{Large deviations for Bernoulli last passage percolation}

A defining feature of fractal percolation is its hierarchical structure, consisting of superimposed Bernoulli percolation configurations at different spatial scales.
In our proof of Theorem \ref{t:sublinear}, we use a large deviations estimate for Bernoulli last passage percolation (LPP) to control the contribution of each Bernoulli layer to the overall last passage time.
We now formulate the key input on Bernoulli LPP large deviations.

Let $\{X_{v}\}_{v\in \bZ^{2}}$ be i.i.d. Bernoulli random variables with parameter $p\in(0,1)$. 
For any $n\ge 1$, let
\begin{align}\label{LPP123}
    S(n) := \max_{\pi} \sum_{v\in \pi} X_{v},
\end{align}
where the maximum is over all directed paths $\pi$ of length $n$ started at the origin. 
Note that the endpoints of the paths are not constrained.
The following lemma asserts an upper tail large deviations estimate for $S(n)$. 
A proof appears in the proof of \cite[Proposition 2.2]{MarLinearGrowthGreedy2002}.
\begin{lemma}[Large deviations for Bernoulli LPP]\label{blpp tail}
    There exist constants $c_* > 0$ and $c > 0$ such that for all $n \ge 1$, all $p \in (0,1)$, and all $t > c_* n p^{1/2}$,
    \begin{align}\label{blpp tail eq}
        \P_p(S(n) > t) \le e^{-ct}.
    \end{align}
\end{lemma}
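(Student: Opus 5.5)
We plan to bound $S(n)$ by a union bound over \emph{sets} of open sites that a single directed path can visit, rather than over the paths themselves. The key point is that the sites of $\pi$ with $X_v=1$ form a chain $v_1\prec\cdots\prec v_m$ in the partial order $\preceq$. Here we allow equality in one coordinate; strictly, each step increases one coordinate and leaves the other weakly larger. Moreover, all these sites lie in the $\ell^1$-ball of radius $n$ about the origin, in the first quadrant. So $S(n)>t$ implies that, for $m=\lceil t\rceil$, there is an increasing chain of $m$ points in $\lb 0,n\rb^2$ with $\|v_m\|_1\le n$, all of whose sites are open.

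The first step is the union bound
\[
\P_p(S(n)\ge m)\le N_m\, p^m,
\]
where $N_m$ is the number of increasing chains of length $m$ in $\{v\in\Zpos^2:\|v\|_1\le n\}$. The chain is determined by the sets of its first coordinates and of its second coordinates. These are two multisets of size $m$ drawn from $\lb 0,n\rb$, which are nondecreasing sequences. Since the total increments are bounded by $n$, a stars-and-bars count gives
\[
N_m\le \binom{n+m}{m}^2\le \left(\frac{e(n+m)}{m}\right)^{2m}.
\]
The second step is to combine these estimates:
\[
\P_p(S(n)\ge m)\le \left(\frac{e^2(n+m)^2p}{m^2}\right)^{m}.
\]
If $m>c_*np^{1/2}$ with $c_*$ large, then $(n+m)^2/m^2\le 2n^2/m^2+2\le 2/(c_*^2p)+2$. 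Hence the bracket is at most $2e^2/c_*^2+2e^2p$. This is below $e^{-1}$ when $p$ is small and $c_*$ is large, which yields the tail $e^{-m}\le e^{-ct}$.

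The main obstacle is the range where $p$ is not small, since there the term $2e^2p$ ruins the bound. In that range we use instead the trivial fact that $S(n)\le n+1$, because a path of length $n$ has $n+1$ sites. Choosing $c_*\ge 1/p_0^{1/2}$ for a fixed small threshold $p_0$, every $p\ge p_0$ satisfies $c_*np^{1/2}\ge n+1$ up to an adjustment of constants, so the event $\{S(n)>t\}$ is empty in this range. Some care with the $+1$ and with small $n$ is needed, and we handle it by enlarging $c_*$ and using the convention $t>c_*np^{1/2}\ge 2n$. This is where the constant $c_*$ is fixed. Finally, $c$ absorbs the rounding from $t$ to $m$.
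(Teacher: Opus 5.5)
Your proof is correct and complete. The three steps all check out: the first-moment bound over chains of $m=\lceil t\rceil$ distinct open sites, the count $N_m\le\binom{n+m}{m}^2$ obtained from the two sorted coordinate sequences, and the case split at a threshold $p_0$. For instance, $p_0=e^{-3}/4$ and $c_*=4e^{3/2}$ make the bracket at most $5/(8e)<e^{-1}$ when $p\le p_0$, and force $t>2n\ge n+1\ge S(n)$ when $p>p_0$, so $c=1$ works. Note that your $\prec$ must be read as ``$\preceq$ and distinct,'' since in the paper $\prec$ means strict inequality in both coordinates.

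The paper does not prove the lemma itself; it defers to the proof of Martin's Proposition~2.2. In the remark following the lemma it motivates the scale $np^{1/2}$ by tessellating into boxes of side $p^{-1/2}$ and comparing with a rate-one Poisson process on $[0,np^{1/2}]^2$. A coarse-graining argument of that type works as follows: union-bound over coarse-grained paths, of which there are exponentially many in $np^{1/2}$, and apply a Chernoff bound to the Binomial (or Poisson) number of open sites per box. It transfers essentially unchanged to undirected paths and lattice animals and explains the $np^{1/2}$ scale. However, it needs a box-counting or coupling step.

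Your route stays at the microscopic level. It is the discrete analogue of the first-moment count $L^{2k}/(k!)^2$ of increasing $k$-chains in a Poisson process. It is shorter, fully self-contained, gives explicit constants, and generalizes directly to $\Z^d$: with $d$ sorted sequences it yields the threshold $np^{1/d}$. It also reproduces the lossiness flagged in the paper's remark: when $np^{1/2}$ is tiny, $m=1$ and your bound reads $\P(S(n)\ge 1)\le e^2(n+1)^2p$. This is valid but far from the true Poisson-type tail at scale $n^2p$.
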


\begin{remark}
It is not too difficult to see why the dependence on the parameters is of the form $n p^{1/2}.$
Consider tessellating the box $[0,n]^2$ with boxes of size $\frac{1}{p^{1/2}}\times\frac{1}{p^{1/2}}$.
Since each such box typically contains $O(1)$ many nonzero weights, $S(n)$ should be approximated by the last passage time across the box $[0,np^{1/2}]^2$ in a Poisson point process of intensity $1$.
It is well known that the latter is of order $np^{1/2}$ with an exponential upper tail at this scale, provided that $np^{1/2}$ is bounded away from $0$ (e.g. \cite{TalNewLookIndependence1996,ADHammersleysInteractingParticle1995}).
When $np^{1/2}=o(1)$, typically every Bernoulli variable in $[0,n]^2$ vanishes, so the upper tail of $S(n)$ is governed by the number of nonzero weights in $[0,n]^2$.
This leads to a Poisson-type tail at scale $n^2p\ll np^{1/2}$, and in particular \eqref{blpp tail eq} is lossy.
\end{remark}

\section{Fractal last passage time upper bound}\label{s:sublinear}

With the above preparation we are now in a position to embark on the proof of Theorem \ref{t:sublinear}.
As discussed already in Section \ref{s:iop}, the argument combines a renormalization step with multi-scale analysis.  
Throughout this section we adopt the notation of Section \ref{s:preliminaries}.

\subsection{Analysis of a few scales}\label{one scale sec}
The main result of this section is an a priori estimate for the last passage time (\cref{p:onescale}). 
In fact, our application will demand that we develop fractal percolation estimates for a $1$-dependent percolation model rather than the standard independent one; see \eqref{e:def-easy} and \eqref{e:def-dep}. Accordingly, the main proposition below is formulated for a general class of fractal percolation models. We refer to such models as a fractal percolation field, which we now introduce.
\subsubsection{Fractal percolation field}\label{s:fractalfield} 
For a sequence of independent percolation configurations $\tomega = \{\tomega_k \in \{0,1\}^{\Lambda_k}: k \in \bZ_{\ge 1}\}$, define 
    \begin{align*}
    \tsigma_{k^{\prime},k} &:= \bigcap_{i=k^{\prime}+1}^{k} \tomega_i\quad\text{for any $k > k^{\prime} \ge 0$;}\\
    \tsigma_k &:= \tsigma_{0,k};\\
    \tsigma &:= \{\tsigma_{k^{\prime},k}\}_{0 \le k^{\prime} < k}.
    \end{align*}
We call the pair $(\tomega, \tsigma)$ a \emph{fractal percolation field} if for any $k \in\Z_{\ge 1}$, the law of $\tomega_{k}$ is invariant under translation by $2^{-k}\bZ^2$ {and reflection across the line $y=x$.}
Note that $\tsigma$ is measurable with respect to $\tomega$.
Further, note that we stipulated the configurations $\tomega_k$ to be independent across $k$, while not requiring them to be individually i.i.d. fields.
This is because in our later applications, they will possess a local $1$-dependent structure.

\subsubsection{Horizontal and vertical crossings}
Fix any rectangle $[0,m) \times [0,n)$ with $n,m\in \bZ_{\ge 1}$ and any directed path $\pi$.
We say that $\pi$ \emph{crosses $[0,m)\times [0,n)$ vertically} if the {closure} of $\pi$ (as a subset of $\R^2$) connects some point $a \in [0,m)\times \{0\}$ to some point  $b\in [0,m)\times \{n\}$.
Similarly, we say that $\pi$ \emph{crosses $[0,m)\times [0,n)$ horizontally} if the closure of $\pi$ connects some point in $\{0\}\times [0,n)$ to some point in $\{m\}\times [0,n)$. When $m > n,$ we refer to vertical crossings of $[0,m)\times [0,n)$ and horizontal crossings of $[0,n)\times [0,m)$ rectangle as \emph{easy crossings} conforming to the standard percolation terminology.

To make use of the above notions of easy crossings we introduce some further notation.
We consider a fractal percolation field $(\tomega,\tsigma)$. For any integers $k > k' \ge0$ and any box $\mathsf{u} \in \Lambda_{k'}$, define the events
\begin{align*}
    &V(\tsigma_{k^\prime,k},\su):=\left\{\text{$\exists$ directed path $\pi_{k}\subset \tsigma_{k',k}$ crossing $\bl(\su) + [0,2^{-k'+1}) \times [2^{-k'},2^{-k'+1})$ vertically}\right\},\\
    &H(\tsigma_{k^\prime,k}, \su):= \left\{\text{$\exists$ directed path $\pi_{k}\subset \tsigma_{k',k}$ crossing $\bl(\su) + [2^{-k'},2^{-k'+1}) \times [0,2^{-k'+1})$ horizontally}\right\}.
\end{align*}
Thus the above events entail that the on-scale $2\times 1$ square $\bl(\su) + [0,2^{-k'+1}) \times [2^{-k'},2^{-k'+1})$ and its reflection across $y=x$ line admit easy crossings.  
See Figure~\ref{f:easycrossing}.
Given the above, we next define the following
percolation configuration  $\Easy(\tsigma_{k',k}) \in \{0,1\}^{\Lambda_{k'}}$:
\begin{align}\label{e:def-easy}
        \Easy(\tsigma_{k',k})(\mathsf{u})
        := 
        \begin{cases}
            1, & \text{$V(\tsigma_{k',k},\su) \cup H(\tsigma_{k',k},\su)$ occurs}\\
            0, & \text{otherwise}
        \end{cases}
        \qquad\quad \text{for }\su \in \Lambda_{k'}.
\end{align}
\begin{figure}[htb]
	\includegraphics[width=0.6\textwidth]{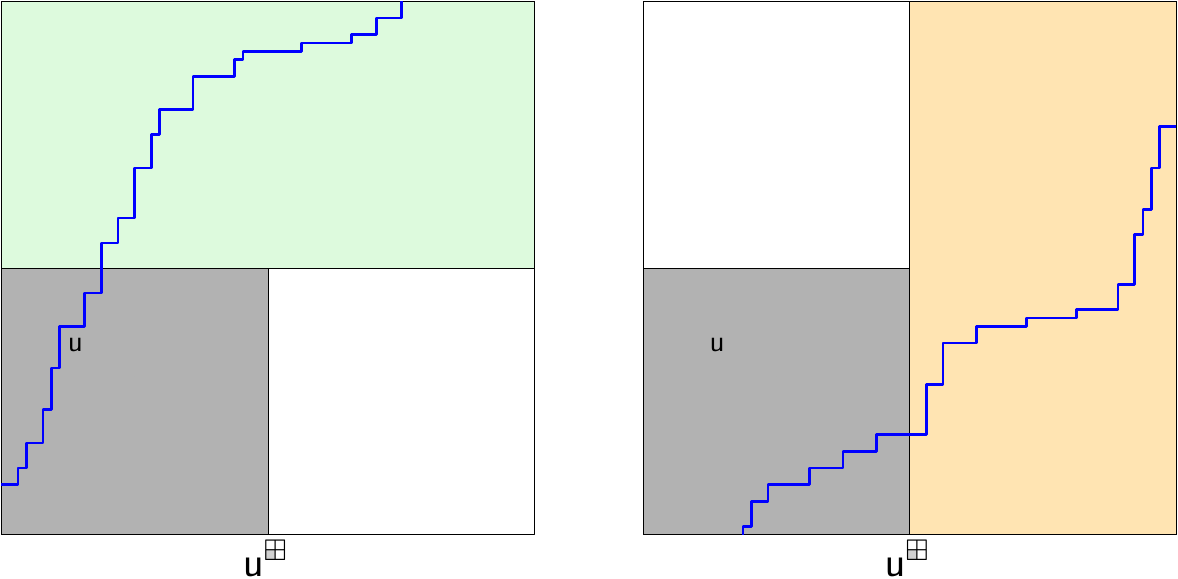}	
    \caption{
    \textbf{Left:} An illustration of the event $V(\tsigma_{k',k},\su)$ ($\su$ is the gray box in the bottom-left corner) that a directed path in $\tsigma_{k',k}$ (the blue path) crosses the green rectangle vertically.
    \textbf{Right:} An illustration of the event $H(\tsigma_{k',k},\su)$ that a directed path in $\tsigma_{k',k}$ crosses the orange rectangle  horizontally. While the events simply stipulate easy crossings, in the above figure the intersection of the blue path with the box $\su$ is also illustrated to aid the discussion in Remark \ref{r:easy}.}
    \label{f:easycrossing}
\end{figure}
{Note that the law of $\Easy(\tsigma_{k',k})$ is invariant under translation by $2^{-k'}\Z^2$. }
\begin{remark}\label{r:easy}
    At first glance, the definition of $\Easy(\tsigma_{k',k})(\mathsf{u})$ may appear counterintuitive, as it depends on the percolation configuration $\tsigma_{k',k}$ within the region $\mathsf{u}^{\boxup} \setminus \mathsf{u}$ (recall from \eqref{e:boxup} that the shaded region is $\mathsf{u}$) rather than $\mathsf{u}$ itself.
    However, this construction is motivated by the following observation: any directed path $\pi_{k}$ connecting the left or bottom edge of $\mathsf{u}$ to the right or top edge of $\mathsf{u}^{\boxup}$ must inevitably either cross $\bl(\mathsf{u}) + [0,2^{-k'+1}) \times [2^{-k'},2^{-k'+1})$ vertically or  $\bl(\mathsf{u}) + [2^{-k'},2^{-k'+1}) \times [0,2^{-k'+1})$ horizontally. 
    Consequently, if $\Easy(\tsigma_{k',k})(\mathsf{u}) = 0$, then there must exist a $2^{-k} \times 2^{-k}$ box $\mathsf{v} \in \mathsf{u}^{\boxup} \cap \pi_k$ such that $\tsigma_{k', k}(\mathsf{v})=0$.
    The primary advantage of this definition is that it relaxes the restriction on the starting point of the directed path $\pi_k$.
    In our setting, segments of directed paths entering $\mathsf{u}$ can originate from any point on its bottom or left edges; this definition naturally accommodates such flexibility without requiring the path to start precisely at the bottom-left corner.
    This will be quite useful later in the proof of Theorem \ref{t:sublinear}.
\end{remark}

\subsubsection{The main a priori estimate for the last passage time}
We now proceed to the main proposition of this section which will serve as the key technical input in the proof of Theorem \ref{t:sublinear}. It states that with high probability, every directed path loses a constant fraction of its length just from the deletion of boxes in the preceding $O(1)$ many scales relative to the scale of the path. 
This makes rigorous the informal estimate \eqref{e:one-scale-iop}  presented earlier.
The proof of Theorem \ref{t:sublinear} will then proceed by bootstrapping this via a multiscale argument (see Section \ref{s:multiscale}).

We will also state the result somewhat generally to aid later applications and consequently need a bit more notation that is presented next.
For any $k\in \Zpos$ and $(\mu,\nu)\in\{1,2,3\}^{2}$, we define the sublattice of $\Lambda_{k}$ (recall $\Lambda_k$ is naturally identified with $2^{-k}\Z^2$) obtained by considering the natural copy of $3\cdot 2^{-k}\Z^2 \subset \Lambda_k$ translated by $(\mu 2^{-k},\nu 2^{-k})$:
\begin{equation}\label{e:separated-lattice}
    \begin{split}
        \Lambda_{k}^{(\mu,\nu)}
        &:= 
        \left\{
            \su\in\Lambda_k : 
            \bl(\su) - (\mu 2^{-k},\nu 2^{-k}) \in 3\cdot 2^{-k}\Z^2
        \right\}\\
        &= \left\{
            \su \in \Lambda_k : 
            2^k \bl(\su) \equiv (\mu ,\nu) \pmod{3}
        \right\}.
    \end{split}
\end{equation}

\begin{proposition}\label{p:onescale}    
    Let $(\tomega, \tsigma)$ be a fractal percolation field as defined in Section~\ref{s:fractalfield} satisfying the following assumptions:
    \begin{enumerate}[label={\rm(A\arabic*)}]
        \item\label{assump:A1} \textup{(Easy directed crossings are rare uniformly across scales).} For any $\e \in (0,1)$, there exists $\tell_{0}(\varepsilon) \in \Z_{\ge 1}$ such that for all $k \ge \tell_{0}$,
        \begin{align*}
            \P\left(\textup{there exists a directed path $\pi_{k}\subset \tsigma_{k-\tell_0, k}$ crossing $[0,2)\times [0,1)$ vertically}\right) \le \varepsilon.
        \end{align*}
        \item\label{assump:A2} \textup{(Finite-range interactions).} For any $(\mu,\nu)\in\{1,2,3\}^{2}$ and any $k > k' \ge1$, 
        \begin{align*}
            \left\{\Easy(\tsigma_{k',k})(\su): \su \in \Lambda_{k'}^{(\mu,\nu)}\right\} \textup{ are independent Bernoulli random variables.}
        \end{align*}
        \textup{It follows from translation invariance that they are in fact i.i.d..}
    \end{enumerate}
    Then there exist constants $c>0$, $\tell\in\Z_{\ge 1}$ and $\tal \in (0,1)$, all depending on $\tell_{0}(\e_0)$ for a universal constant $\e_0$ such that the following holds.

    Fix $a, b \in \bR^2$ with $a\preceq b$, and let $\sa,\sfb$ be the unique boxes in $\Lambda_{\tell}$ containing $a,b$ respectively.
    Assume that $\dist(\bl(\sa),\tr(\sfb)) \ge 2$.
    Then for all $k \ge \tell + 7$, 
    with probability at least $1-e^{-c2^{k}}$ it holds that
    \begin{center}
        \textup{for any directed path $\pi_k$ in $\Lambda_k$ connecting $a$ and $b$, there exists $(\mu,\nu)\in\{1,2,3\}^2$ such that}
        \begin{equation}\label{e:onescale}
            \begin{split}
            \left|\left\{\mathsf{u} \in \Proj_{k-\tell}(\pi_{k}) \cap \Lambda_{k-\tell}^{(\mu,\nu)}:\Easy(\tsigma_{k-\tell,k})(\mathsf{u})=0\right\}\right| 
            &\ge 
            \frac{1}{18}\cdot \Size_{k-\tell}(\Proj_{k-\tell}(\pi_{k}))\\
            &\ge
            \tal \cdot \dist(a,b) \cdot 2^{k}.
            \end{split}
        \end{equation}
    \end{center}
   In fact, the value of $\tal$ can be chosen to be $\frac{1}{36} \cdot 2^{-\tell}$. The following last passage time bound also holds on the above event:
    \begin{align}\label{e:constant-frac-loss}
        L_{\tsigma_k}(a,b)
        \le \left(1-\frac{1}{36} \cdot 2^{-\tell}\right)\cdot \dist(a,b)\cdot 2^{k}.
    \end{align}
\end{proposition}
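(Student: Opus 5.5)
The plan is to reduce to a union bound over the coarse projected path $\Proj_{k-\tell}(\pi_k)$, using Lemma \ref{blpp tail} on each of the nine sublattices $\Lambda^{(\mu,\nu)}_{k-\tell}$. Assumptions \ref{assump:A1}--\ref{assump:A2} will make the non-easy indicators sparse i.i.d. Bernoulli variables there.

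\textbf{Step 1 (sparsity of Easy boxes).} Fix $\e$ to be chosen, set $\tell\ge\tell_0(\e)$, and consider $\su\in\Lambda_{k-\tell}$. The event $V(\tsigma_{k-\tell,k},\su)$ is a vertical crossing of a $2\times1$ rectangle at scale $2^{-(k-\tell)}$ using only scales $k-\tell+1,\dots,k$. Rescaling by $2^{k-\tell}$ and using translation invariance of each $\tomega_i$, this event is dominated by the event in \ref{assump:A1}: a crossing in $\tsigma_{k-\tell,k}$ lies in $\tsigma_{k-\tell_0,k}$ as $\tell\ge\tell_0$. One point needs checking. \ref{assump:A1} is phrased at the unit scale, so I will apply it through the rescaled fractal field. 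Alternatively, I will read it as holding for the rectangle at scale $2^{-(k-\tell)}$ directly, via the invariance built into a fractal percolation field. Reflection across $y=x$ handles $H$. Hence
\[
\bar p:=\P(\Easy(\tsigma_{k-\tell,k})(\su)=1)\le 2\e .
\]
By \ref{assump:A2}, on each sublattice these indicators are i.i.d. Bernoulli$(\bar p)$, and the sublattice is a copy of $\Z^2$.

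\textbf{Step 2 (counting on a sublattice).} Let $m=\Size_{k-\tell}(\Proj_{k-\tell}(\pi_k))$. By Lemmas \ref{length of path} and \ref{projection of path}, $m\asymp \dist(a,b)2^{k-\tell}$, and the hypothesis $\dist(\bl(\sa),\tr(\sfb))\ge 2$ keeps this comparable to $2^{k-\tell}$ from below. By pigeonhole some $(\mu,\nu)$ contains at least $m/9$ boxes of the projected path. I will then show that at most half of these are Easy. The boxes of a directed path lying in one residue class mod $3$ are ordered under $\preceq$. Viewed in the rescaled sublattice $\Z^2$, they are visited by a directed path of length $O(m)$, after interpolating through the lattice points between consecutive ones. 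Lemma \ref{blpp tail} then gives that the number of Easy boxes along every directed sublattice path of length $O(m)$ from a given start is at most $c_* O(m)\bar p^{1/2}\le m/18$, with failure probability $e^{-cm}$. This holds once $\e$, hence $\bar p$, is a small universal constant; that constant is $\e_0$. The bound covers all paths simultaneously because $S(n)$ is a maximum over paths. It remains to union bound over starting boxes near $\sa$ ($O(1)$ choices at scale $k-\tell$ relative to $2^{-\tell}$, absorbed for $k\ge\tell+7$) and over the $9$ sublattices. This yields probability $1-e^{-c2^{k}}$ with $c$ depending on $\tell$. So there are at least $m/9-m/18=m/18$ non-Easy boxes, which with $m\ge \tfrac12\dist(a,b)2^{k-\tell}$ gives \eqref{e:onescale} with $\tal=\frac1{36}2^{-\tell}$.

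\textbf{Step 3 (passage time).} By Remark \ref{r:easy}, each non-Easy $\su$ on the sublattice forces a deleted box of $\pi_k$ inside $\su^{\boxup}$. Spacing by $3$ makes the sets $\su^{\boxup}$ disjoint across the sublattice, so the boxes found are distinct. One must also verify that $\pi_k$ genuinely traverses $\su^{\boxup}$ from the left or bottom of $\su$ to the top or right of $\su^{\boxup}$. This holds except near the endpoints $a,b$, and those few exceptional boxes are absorbed by the slack. Subtracting these deletions from $\Size_k(\pi_k)\le 2^k\dist(a,b)+3$ gives \eqref{e:constant-frac-loss}. To make room for the $+3$ and the boundary losses, I will run Step 2 with a slightly better constant.

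I expect the main obstacle to be this geometric bookkeeping: embedding the sublattice visits as a genuine directed Bernoulli LPP path, and certifying the traversal of $\su^{\boxup}$. The probabilistic content is routine given Lemma \ref{blpp tail}.
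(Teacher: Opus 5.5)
Your proposal is correct and is essentially the paper's proof. The steps match: the same sparsity bound for $\Easy$ from \ref{assump:A1} plus reflection, the same pigeonhole over the nine sublattices combined with Lemma \ref{blpp tail}, and the same conversion of non-easy boxes into deleted boxes via disjointness of the $\su^{\boxup}$. The only real difference is in decoupling the $1$-dependence: the paper pads each sublattice field with independent auxiliary $\mathrm{Bernoulli}(\overline{p})$ variables off the sublattice, so Lemma \ref{blpp tail} applies directly (with $n=K$) to the projected path from the fixed box containing $a$, whereas you compress to the sublattice and interpolate. Two small points on your version. First, the first sublattice box visited need not be within $O(1)$ of $a$, so instead of a union bound over starts, anchor the interpolated path at the fixed sublattice box just below-left of the box containing $a$. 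Second, your Step 3 caveat is a genuine point the paper passes over: there is at most one sublattice box $\su$ with $b\in\su^{\boxup}$, and it is harmlessly absorbed by the slack from $k\ge\tell+7$.
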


The proof relies on a renormalization argument.
The key idea is to construct a coarse Bernoulli environment with small density $\overline{p}$ using the $O(1)$-many, i.e, {$k-\tell$ to $k$},  finest scales of the fractal percolation model.
We then derive the stated bounds using the large deviations estimate for i.i.d. Bernoulli last passage percolation recorded earlier in Lemma \ref{blpp tail}.

\begin{proof}[Proof of Proposition \ref{p:onescale}] Fix $a \preceq b$ as in the statement.
Write $K := \Size_{k-\tell}(\Proj_{k-\tell}(\pi_{k}))$, where $\pi_{k}$ is any directed path in $\Lambda_{k}$ connecting $a$ and $b$, and $\tell$ will be chosen later.
Note that $K$ does not depend on the choice of $\pi_k$ since by Lemma \ref{projection of path}, $\Proj_{k-\tell}(\pi_{k})$ is a directed path connecting $a$ and $b$ in $\Lambda_{k - \tell}$.
Moreover, by Lemma \ref{length of path}, we have
\begin{align}\label{lower bound K}
     K \ge 2^{k - \tell} \dist(a,b) - 1
     \ge 2^{k - \tell + 1} - 1
     \ge 2^{k - \tell},
\end{align}
by the inequality $2x - 1 \ge x$ for $x \ge 1$.
We now carry out the two steps outlined above.\\

\noindent
\textbf{Step 1: Creating a Bernoulli environment.}
Fix any $\varepsilon>0$ satisfying 
$180 c_* \varepsilon^{1/2} < 1$,
where $c_*$ is the universal constant appearing in Lemma~\ref{blpp tail}.
To construct the desired environment, we consider $\Easy(\tsigma_{k-\tell,k})$ on $\Lambda_{k-\tell}$, where $\tell=\tell_0(\e/2)$ is as in \ref{assump:A1}.
Then we claim that for any $k\ge\tell$ and any $\mathsf{u} \in \Lambda_{k-\tell}$, we have
\begin{align}\label{e: para-bound}
    \overline{p} := \bP\left(\Easy(\tsigma_{k-\tell,k})(\mathsf{u}) = 1\right) \le \varepsilon.
\end{align}
Note that $\overline{p}$ does not depend on $\mathsf{u}$ due to the translation invariance of $\Easy$.
Given \eqref{e: para-bound}, by our choice of $\varepsilon$, it follows that for all $k\ge\tell + 7$ we have
\begin{align}\label{e:pbar-bound}
    c_*\overline{p}^{1/2}  < 
    \frac{1}{180}.
\end{align}
To see \eqref{e: para-bound}, we recall the definition of $\Easy(\tsigma_{k-\tell,k})(\mathsf{u})$ in \eqref{e:def-easy} and have 
\begin{align*}
\P\left(\Easy(\tsigma_{k',k})(\mathsf{u}) = 1\right) \le \P(V(\tsigma_{k-\tell,k},\su))+\P(H(\tsigma_{k-\tell,k},\su)).
\end{align*}
Note that the event $V(\tsigma_{k-\tell,k},\su)$ is a rescaled version of the event in \ref{assump:A1} at scale $\tell$, and the event $H(\tsigma_{k-\tell,k},\su)$ is a rescaled and reflected (across the line $y = x$) version of the same at scale $\tell$.
Thus, by \ref{assump:A1} and our choice of $\tell$, we have
\[
\P(V(\tsigma_{k-\tell,k},\su))+\P(H(\tsigma_{k-\tell,k},\su)) \le \e /2 + \e/2 = \e.
\]

\noindent
\textbf{Step 2: Deriving the exponential tail.}  
Since the $\Easy(\tsigma_{k-\tell,k})$ variables are not assumed i.i.d. but may have a local $1$-dependency structure, it will be cleaner, at the cost of a finite union bound, to separate the dependency.
Towards this we construct nine auxilliary i.i.d. Bernoulli percolation models.
First, for each $(\mu,\nu) \in \{1,2,3\}^{2}$, we sample i.i.d. $\mathrm{Bernoulli}(\overline{p})$ random variables $\{\boldsymbol{\omega}_{k-\tell}(\mathsf{u},(\mu,\nu)) : \mathsf{u} \in \Lambda_{k-\tell}\}$, independent of everything else.
Then we define
\begin{align*}
    \omega_{k-\tell}^{(\mu,\nu)}(\mathsf{u}) = \begin{cases}
        \Easy(\tsigma_{k-\tell,k})(\mathsf{u}), & \text{if } \mathsf{u} \in \Lambda_{k-\tell}^{(\mu,\nu)}, \\
        \boldsymbol{\omega}_{k-\tell}(\mathsf{u},(\mu,\nu)), & \text{otherwise}.
    \end{cases}
\end{align*}
By \ref{assump:A2}, $\{\Easy(\tsigma_{k-\tell,k})(\mathsf{u})\}_{\mathsf{u}\in\Lambda_{k-\tell}^{(\mu,\nu)}}$ are i.i.d. Bernoulli random variables with parameter $\overline{p}$.
Moreover, by construction, the field $\omega_{k-\tell}^{(\mu,\nu)} = \{\omega_{k-\tell}^{(\mu,\nu)}(\mathsf{u}) : \mathsf{u} \in \Lambda_{k-\tell}\}$ defines an i.i.d. Bernoulli percolation model on $\Lambda_{k-\tell}$ (because for any two distinct boxes $\su, \sv\in \Lambda^{(\mu,\nu)}_{k-\tell}$, the enlarged boxes $\su^{\boxup},\sv^{\boxup}$ are disjoint and hence the respective easy crossing events occur independently).
Furthermore, since $\omega_{k-\tell}^{(\mu,\nu)}(\mathsf{u}) \ge 0$ for all $\mathsf{u} \in \Lambda_{k-\tell}$, we have (recalling the definition of $L_{\Easy(\tsigma_{k-\tell,k})}(a,b)$ from \eqref{e:lpp})
\begin{align*}
    L_{\Easy(\tsigma_{k-\tell,k})}(a,b) \le \sum_{\mu,\nu=1}^{3} L_{\omega_{k-\tell}^{(\mu,\nu)}}(a,b).
\end{align*}
Then by a union bound and Lemma \ref{blpp tail} for $n=K$, we obtain the following tail estimate for all $t > 9c_* \overline{p}^{1/2} K$:
\begin{align*}
    \bP\left(L_{\Easy(\tsigma_{k-\tell,k})}(a,b) > t\right) 
    &\le \sum_{\mu,\nu=1}^3 \bP\left(L_{\omega_{k-\tell}^{(\mu,\nu)}}(a,b) > \frac{t}{9} \right)\\
    &\le \sum_{\mu,\nu=1}^3
    \P_{\overline{p}}\left(
        S(K) > \frac{t}{9}
    \right)\\
    &\le 9 e^{-ct/9}.
\end{align*}
In particular, applying the above with $t=10c_* \overline{p}^{1/2}K$ yields
\begin{align}\label{e:exptail}
    \P\left(
        L_{\Easy(\tsigma_{k-\tell,k})}(a,b) \le 10 c_*\overline{p}^{1/2} K
    \right)
    &\ge 1 - 9e^{-\frac{10}{9}cc_* \overline{p}^{1/2} K} 
    \overset{\eqref{lower bound K}}{\ge} 1 - 9e^{-c' 2^{k-\tell}} 
    \ge 1 - e^{-c''2^{k}},
\end{align}
{where $c',c''>0$ depend on $\overline p$ and $\tell$ and hence only on the law of $(\tomega, \tsigma)$.}
We will now argue that \eqref{e:onescale} holds on the event in the LHS, and afterwards deduce \eqref{e:constant-frac-loss} from \eqref{e:onescale}.
Assume that $L_{\Easy(\tsigma_{k-\tell,k})}(a,b) \le 10c_*\overline{p}^{1/2} K$.
By \eqref{e:pbar-bound}, this implies
\begin{align*}
    L_{\Easy(\tsigma_{k-\tell,k})}(a,b) \le
    \frac{K}{18}.
\end{align*}
On the other hand, fix any directed path $\pi_k$ on $\Lambda_k$ connecting $a$ and $b$.
Recalling that $K:=\Size_{k-\tell}(\Proj_{k-\tell}(\pi_{k}))$, we see that by the pigeonhole principle, there exists $(\mu,\nu) \in \{1,2,3\}^{2}$ such that
\begin{align}\label{deletion on sublattice eq}
    \Size_{k-\tell}\left(\Proj_{k-\tell}(\pi_{k}) \cap \Lambda_{k-\tell}^{(\mu,\nu)}\right) 
    \ge
    \frac{K}{9}.
\end{align}
We deduce that
\begin{equation}\label{e:1234}
    \begin{split}
        \Bigl|\Bigl\{
        \mathsf{u} \in \Proj_{k-\tell}(\pi_{k}) \cap \Lambda_{k-\tell}^{(\mu,\nu)}:
        \Easy&(\tsigma_{k-\tell,k})(\mathsf{u})= 0
        \Bigr\}\Bigr|\\
        &\ge \Size_{k-\tell}\left(\Proj_{k-\tell}(\pi_{k}) \cap \Lambda_{k-\tell}^{(\mu,\nu)}\right) - L_{\Easy(\tsigma_{k-\tell,k})}(a,b)\\
        &\ge \frac{K}{9}- \frac{K}{18}\\
        &= \frac{1}{18} \Size_{k-\tell}(\Proj_{k-\tell}(\pi_{k})).
    \end{split}
\end{equation}
This establishes the first inequality in \eqref{e:onescale}.
The second inequality in \eqref{e:onescale} can be deduced from the following relation between $\dist(a,b)$ and $\Size_{k-\tell}(\Proj_{k-\tell}(\pi_{k}))$:
\begin{align*}
    \Size_{k-\tell}(\Proj_{k-\tell}(\pi_{k})) \ge  \dist(a,b) \cdot 2^{k-\tell} - 1.
\end{align*}     
Plugging this into the first inequality in \eqref{e:onescale} yields a further lower bound
\begin{align*}
    \frac{1}{18} \cdot \left(\dist(a,b) \cdot 2^{k-\tell} - 1\right) \ge \frac{1}{18} \cdot \frac{1}{2} \cdot \dist(a,b) \cdot 2^{k-\tell} = \frac{1}{36} \cdot 2^{-\tell} \cdot \dist(a,b) \cdot 2^{k},
\end{align*}
where the first inequality is because $\dist(a,b) \cdot 2^{k-\tell} \ge 2 \cdot 2^{7} >2$. This completes the proof of the second inequality in \eqref{e:onescale} by choosing $\tal = \frac{1}{36}\cdot 2^{-\tell}$.

To deduce \eqref{e:constant-frac-loss} from \eqref{e:onescale}, we just need to unpack the definition of the percolation model $\Easy(\tsigma_{k-\tell,k})$.
Denote by $\mathcal{R}=\mathcal{R}(\pi_k)$ the set of $2^{-k+\tell}\times 2^{-k+\tell}$ boxes appearing on the LHS of \eqref{e:onescale} and \eqref{e:1234}.
Note that for any $\su \in \mathcal{R}$, by directedness $\pi_k$ enters $\su$ from its left or bottom edge, and exits $\su^{\boxup}$ from its right or top edge, where we recall from \eqref{e:boxup} that $\su^{\boxup}$ is the box of size $2^{-k-\tell+1}\times 2^{-k-\tell+1}$ with the same bottom-left corner as $\su$.
Therefore, by definition of $\Easy$, for any 
$\mathsf{u}\in\mathcal{R}$, there exists $\mathsf{v} \in \pi_{k}\cap \su^{\boxup}$ such that $\tsigma_{k-\tell,k}(\mathsf{v}) = 0$; otherwise, $\pi_k$ would contain an easy crossing for $\tsigma_{k-\tell,k}$.
Additionally, since $\mathsf{u}_1^{\boxup}$ and $\mathsf{u}_2^{\boxup}$ are disjoint for all distinct $\su_1,\su_2\in\mathcal{R}$ (by definition of $\Lambda_{k-\tell}^{(\mu,\nu)}$),
it follows that $N_{\tsigma_{k}}(\pi_k) \le \Size_{k}(\pi_{k}) - |\mathcal{R}(\pi_k)|$,
where $N_{\tsigma_k}(\pi_k)$ was defined in \eqref{e:passage time}.
By Lemma \ref{length of path}, we have 
\begin{align*}
    \Size_{k}(\pi_{k}) \le 2^{k} \dist(a,b) + 3 \quad \text{and} \quad \Size_{k-\tell}(\Proj_{k-\tell}(\pi_{k})) \ge 2^{k-\tell} \dist(a,b) - 1.
\end{align*}
Therefore, we conclude from \eqref{e:onescale}
that with probability at least $1-e^{-c'2^{k}}$,
for any directed path $\pi_{k}\in \Pi_{k}$, we have
\begin{align}\label{passage time square eq}
    N_{\tsigma_{k}}(\pi_{k}) 
    &\le
    \Size_{k}(\pi_{k})  
    - \left|\left\{
        \mathsf{u} \in \Proj_{k-\tell}(\pi_{k}) \cap \Lambda_{k-\tell}^{(\mu,\nu)}: \Easy(\tsigma_{k-\tell,k})(\mathsf{u}) = 0
    \right\}\right| \nonumber \\ 
    \overset{\eqref{e:onescale}}&{\le} \Size_{k}(\pi_{k}) - \frac{1}{18} \Size_{k-\tell}(\Proj_{k-\tell}(\pi_{k})) \nonumber\\
    &\le 2^{k} \dist(a,b) + 3 - \frac{1}{18} \left(2^{k-\tell} \dist(a,b) - 1\right) \nonumber\\
    &\le \left(1 - \frac{1}{18} \cdot 2^{-\tell} + \frac{4}{2^{k}\dist(a,b)}\right) \cdot 2^{k} \dist(a,b)\nonumber\\
    &\le \left(1 - \frac{1}{36} \cdot 2^{-\tell}\right) \cdot 2^{k} \cdot \dist(a,b),
\end{align}
where in the last inequality we used that $k \ge \tell + 7$ and $\dist(a,b) \ge 2$ to bound 
\begin{align*}
    \frac{4}{2^{k}\dist(a,b)} 
    \le \frac{4}{2^{8}} \cdot 2^{-\tell} 
    \le \frac{1}{36} \cdot 2^{-\tell}.
\end{align*}
Maximizing over directed paths $\pi_k$ completes the proof of \eqref{e:constant-frac-loss}.
\end{proof}

\subsection{Multiscale analysis}\label{s:multiscale}
In this section we prove Theorem \ref{t:sublinear}.
As indicated in Section \ref{s:iop}, the idea is to iterate Proposition \ref{p:onescale} at many scales to show that with high probability, any path $\pi=\pi_n$ on $\Lambda_n$ must lose a constant fraction of its length per dyadic scale $k\in\lb 0,n\rb$.

Applying Proposition \ref{p:onescale} (viz. \eqref{e:constant-frac-loss}) to the original fractal percolation model $(\omega,\sigma)$, we get that with high probability, for every directed path $\pi=\pi_n$ in $\Lambda_n$, for $0\ll k \le n$ we can find many deleted  $2^{-k}\times 2^{-k}$ boxes on $\Proj_{k}(\pi)$. At this point a geometric issue arises. Namely, that the preceding fact by itself does \emph{not} imply a significant loss of length for the microscopic path $\pi$: in general $\pi$ can have a tiny overlap with a given $\sv\in\Proj_{k_1}(\pi)$ (for example, $\pi$ can enter $\sv\cap\Lambda_n$ through its bottom-right corner and then immediately exit). 
To address this issue, our analysis will center around the following $1$-dependent fractal percolation model. For any $k \in \Z_{\geq 1}$, recall that $\omega_k = \{\omega_{k}^{p}(\su)\}_{\su \in \Lambda_k}$ is a Bernoulli percolation model on $\Lambda_{k}$ with parameter $p$, i.e. a collection of i.i.d. Bernoulli random variables with parameter $p$. Given this, 
for any $k \in \Z_{\geq 1}$ and $\su \in \Lambda_{k}$, we define
\begin{align}\label{e:def-dep}
    \Dep(\omega_{k})(\su) := \begin{cases}
        0, & \text{if $\omega_{k}(\mathsf{v})=0$ for all $\mathsf{v}\in \Lambda_{k}$ with $\dist (\cen(\mathsf{u}),\cen(\mathsf{v})) \le 2^{-k}$}\\
        1, & \text{otherwise}
    \end{cases}.
\end{align}
In words, $\Dep(\omega_k)(\su)=0$ if and only if
$\su$ and the four boxes adjacent to it are all deleted.
For any integers $k>k'\ge 0$ we denote
\begin{equation}
    \begin{split}
        &\Dep(\sigma_{k',k}) := \bigcap_{k' < r \le k}\Dep(\omega_{r}),\\
        &\Dep(\sigma_{k}) := \Dep(\sigma_{0,k}).
    \end{split}
\end{equation}

It is {straightforward} to check that $(\{\Dep(\omega_k)\}_{k \ge 1}, \{\Dep(\sigma_{k',k})\}_{k>k'\ge 0})$ is a fractal percolation field as defined in Section~\ref{s:fractalfield}, {i.e. that $\Dep(\omega_k)$ is invariant under translation by $2^{-k}\Z^2$ and reflection across the line $y=x$ for every $k\ge 1$.}

In order to apply Proposition~\ref{p:onescale}, we need to verify the two assumptions \ref{assump:A1} and \ref{assump:A2} for $(\{\Dep(\omega_k)\}_{k \ge 1}, \{\Dep(\sigma_{k',k})\}_{k>k'\ge 0})$.
A key ingredient is the following lemma, which rules out directed percolation in the $1$-dependent fractal percolation model $\Dep(\sigma_k)$ in a non-quantitative manner, and in particular verifies assumption \ref{assump:A1} for any $p\in(0,1)$. This is a dependent version of Chayes's theorem \cite[Theorem A]{ChaAbsenceDirectedFractal1995}. 
\begin{lemma}[No directed easy crossings, $1$-dependent case]\label{l:easycross-dep}
    For any $p \in (0,1)$ and $\varepsilon \in (0,1)$, there exists $\ell_{0}(p,\varepsilon) \in \Zpos$ such that for all $k > \ell_{0}$,
    \begin{align*}
        \bP_{p}\left(\textup{there exists a directed path $\pi_{k}\subset \Dep(\sigma_{k})$ crossing $[0,2)\times [0,1)$ vertically}\right)
        \le \varepsilon.
\end{align*}
\end{lemma}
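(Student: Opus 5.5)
The plan is to adapt Chayes's planar argument \cite{ChaAbsenceDirectedFractal1995} to the $1$-dependent model. First, reduce to a soft statement. Let $E_k$ be the event in the lemma. The events $E_k$ are decreasing in $k$: a directed path in $\Lambda_{k+1}$ contained in $\Dep(\sigma_{k+1})\subset\Dep(\sigma_k)$ projects, by Lemma \ref{projection of path}, to a directed path in $\Lambda_k$ contained in $\Dep(\sigma_k)$ with the same crossing property. It therefore suffices to show that $\P_p(\bigcap_k E_k)=0$. By compactness, the event $\bigcap_k E_k$ is equivalent to the existence of a Lipschitz directed (monotone) continuous curve crossing $[0,2]\times[0,1]$ vertically and lying in $\bigcap_k \overline{\Dep(\sigma_k)}$. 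So the goal is to rule out monotone crossings of this limiting set.

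The core step is a Chayes-type renormalization for $\theta_k:=\P(E_k)$. At scale $1$ (coarse boxes of $\Lambda_1$), a vertical directed crossing of $[0,2)\times[0,1)$ must cross a bounded number of unit-type rectangles. Each such rectangle can be rescaled into a crossing event of the same type for the model restricted to scales $2,\dots,k$, which has the law of $E_{k-1}$. I would try to show a recursion of the form
\[
\theta_k \le (1-\delta)\,F(\theta_{k-1})
\]
for a fixed polynomial $F$ with $F(1)=1$. Here $\delta=\delta(p)>0$ lower bounds the probability that the first-level configuration $\Dep(\omega_1)$ blocks enough boxes, that is, that all boxes in a neighbourhood are deleted. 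This happens with probability at least $(1-p)^{C}>0$, and on that event no directed path gets through a whole region. Planarity is used exactly as in Chayes: a vertical directed crossing of a $2\times1$ rectangle forces either crossings of two disjoint sub-rectangles, or a crossing of a sub-rectangle that is itself of easy type. The disjoint sub-rectangles use independent information at scales $\ge2$, and the $1$-dependence of $\Dep(\omega_r)$ only couples adjacent boxes. I would absorb this by using the sublattice trick of Proposition \ref{p:onescale}, i.e.\ separating boxes with $\Lambda^{(\mu,\nu)}$-type spacing, or by enlarging rectangles by one box. From this one concludes that the limit $\theta_\infty=\lim\theta_k$ satisfies $\theta_\infty\le(1-\delta)F(\theta_\infty)$. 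Combined with a separate argument that $\theta_\infty\in\{0,1\}$, this forces $\theta_\infty=0$, since $\theta_\infty=1$ contradicts $\delta>0$.

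For the $0$--$1$ step I would use Chayes's argument in the form: if $\theta_\infty>0$, then by self-similarity and FKG-free independence across scales, crossings exist with probability tending to $1$ at a sequence of scales. Independence across scales holds in $\Dep$, since $\Dep(\omega_r)$ are independent in $r$. One then gets $\theta_\infty=1$, which contradicts the fact that a fully blocked first level (probability $\ge(1-p)^C$) kills all crossings. The main obstacle I anticipate is the geometric case analysis showing that a directed crossing decomposes into crossings of smaller rectangles with the same aspect ratio; directedness is what makes the decomposition non-trivial. I would try first to follow Chayes's decomposition verbatim, replacing "retained box" by "$\Dep=1$ box" and enlarging all rectangles by one box on each side to swallow the dependence. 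The dependence only enlarges deleted regions, so it is monotonically favorable.
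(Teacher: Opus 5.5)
There is a genuine gap, and it sits in the step you treat as auxiliary. The first level alone kills every crossing with probability at least $(1-p)^C$, so $\theta_\infty\le 1-(1-p)^C<1$ trivially. Hence the claim $\theta_\infty\in\{0,1\}$ is literally equivalent to the lemma, and the recursion $\theta_k\le(1-\delta)F(\theta_{k-1})$ contributes nothing. On its own it does not close either: for polynomials of the kind produced by union bounds over sub-crossings, $\theta\le(1-\delta)F(\theta)$ typically has solutions in $(0,1)$, which is why you needed the extra input.

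Your justification of the $0$--$1$ law (if $\theta_\infty>0$, then self-similarity and independence across scales push crossing probabilities to $1$) never uses directedness or planarity. It would apply verbatim to \emph{undirected} crossings of $[0,2)\times[0,1)$. There, for $p$ close to $1$, the limiting crossing probability is strictly between $0$ and $1$: it is positive since $p_c(2,2)<1$, and it is at most $1-(1-p)^C$. So that step is false as a general principle and assumes away the whole difficulty.

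A secondary error: the dependence is not favorable. $\Dep(\omega_k)(\su)=0$ requires $\su$ \emph{and} its four neighbours to be deleted. So $\Dep(\omega_k)\supseteq\omega_k$, closed boxes have density only $(1-p)^5$, and one cannot reduce to Chayes's independent theorem by monotonicity.

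The missing idea is Chayes's quantitative contour argument, which is what the paper adapts. Column by column, one records the height of the first closed box above the previous one. An up-right path cannot go around a closed box to the left, so this staircase blocks every vertical crossing of $[0,1)\times[0,L)$ as soon as its final height is below $L$. Since the increments are i.i.d., the law of large numbers gives this for some finite $L$.

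One then bootstraps. Test one box at scale $k$ per column. If it is closed, the staircase gains no height; if it is open, insert a blocking contour of height $2^{-k}L$ from the finer scales, which are independent of level $k$. The mean gain per column is $p\,2^{-k}L$, so $L$ shrinks by a fixed factor. Finitely many iterations bring it below $\tfrac12$, which is the lemma after rescaling.

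For $\Dep$, independence is restored in two places. In the first step one skips two boxes between tests. In the bootstrap one tests only every third column, and the two skipped columns receive untested contours; this yields the factor $\frac{3-(1-p)^5}{3}<1$. Without some height-reduction mechanism of this kind that uses directedness, your renormalization has nothing that forces $\theta_\infty=0$.
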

The proof of the above involves adapting Chayes's argument and is deferred to Appendix \ref{a:easycross}. 
That \ref{assump:A2} holds is an immediate consequence of the definitions and the details are provided shortly. 

Given this we can now apply Proposition~\ref{p:onescale} to $(\{\Dep(\omega_k)\}_{k \ge 1}, \{\Dep(\sigma_{k',k})\}_{k>k'\ge 0})$. This is recorded in the following proposition:

\begin{proposition}\label{p:onescale-dep}
    For every $p\in(0,1)$, there exist constants $c,\al>0$ and $\ell\in\Z_{\ge 1}$, all depending on $p$, such that the following holds.
    
    Fix $a, b \in \R^2$ with $a\preceq b$, and let $\sa,\sfb$ be the unique boxes in $\Lambda_{\ell}$ containing $a,b$ respectively.
    Assume that $\dist(\bl(\sa),\tr(\sfb)) \ge 2$.
    Then for all $k \ge \ell + 7$, it holds with probability at least $1-e^{-c2^{k}}$ that
   {for any directed path $\pi_k$ on $\Lambda_k$ connecting $a$ and $b$, there exists $(\mu,\nu)\in\{1,2,3\}^2$ (recall the well-separated lattice $\Lambda_{k-\ell}^{(\mu,\nu)}$  defined in \eqref{e:separated-lattice}) such that} 
        \begin{align}\label{e:onescale-dep}
            \left|\left\{
                \mathsf{u} \in \Proj_{k-\ell}(\pi_{k}) \cap \Lambda_{k-\ell}^{(\mu,\nu)}:\Easy(\Dep(\sigma_{k-\ell,k}))(\mathsf{u})=0
            \right\}\right| 
            &\ge 
            \al \cdot \dist(a,b)\cdot 2^{k}.
        \end{align}
\end{proposition}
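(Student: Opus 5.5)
Proposition \ref{p:onescale-dep} follows from Proposition \ref{p:onescale} applied to the fractal percolation field $(\tomega,\tsigma) := (\{\Dep(\omega_k)\}_{k\ge1},\{\Dep(\sigma_{k',k})\}_{k>k'\ge0})$. We already know this is a fractal percolation field: the $\Dep(\omega_k)$ are independent across $k$, and each is invariant under $2^{-k}\Z^2$-translations and under reflection across $y=x$. So the plan is to verify \ref{assump:A1} and \ref{assump:A2} and then read off \eqref{e:onescale-dep} from \eqref{e:onescale}, taking $\ell := \tell$, $\al := \tal$, and $c$ as in Proposition \ref{p:onescale}.

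\textbf{Verifying \ref{assump:A1}.} Fix $\e\in(0,1)$ and set $\tell_0(\e) := \ell_0(p,\e)+1$, with $\ell_0$ from Lemma \ref{l:easycross-dep}. For $k\ge\tell_0$, the field $\{\Dep(\omega_r)\}_{k-\tell_0<r\le k}$ restricted to $[0,2)\times[0,1)$ is handled by rescaling space by $2^{k-\tell_0}$. Under this rescaling, the $2^{-r}$-boxes become $2^{-(r-k+\tell_0)}$-boxes. Because $\Dep(\omega_r)$ is defined by the same local rule at every scale, the rescaled field has the law of $\{\Dep(\omega_j)\}_{1\le j\le\tell_0}$, and so $\tsigma_{k-\tell_0,k}$ has the law of $\Dep(\sigma_{\tell_0})$ on the rescaled region.

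The rescaled region $2^{k-\tell_0}([0,2)\times[0,1))$ is a union of translates of $[0,2)\times[0,1)$ by vectors in $\Z^2$. A vertical crossing of this large rectangle contains a vertical crossing of the bottom row of the $2\times1$ rectangle tiling. Alternatively, we compare directly: the large rectangle is the $2\times 1$ rectangle at a coarser scale. The cleanest route is to note that $\tsigma_{k-\tell_0,k}\subset$ (a scale-$0$ translate-invariant copy of) $\Dep(\sigma_{\tell_0})$ on unit-scale geometry. The crossing event for $[0,2)\times[0,1)$ at depth $k$ using only the last $\tell_0$ scales is then contained in, or equal in law to, the event of Lemma \ref{l:easycross-dep} at depth $\tell_0$.

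\textbf{Main obstacle.} The subtle point is the bookkeeping in this scale identification. The first retained scale in $\Dep(\sigma_{\tell_0})$ is $r=1$, not $r=0$, so the parameters must be matched carefully. One also needs $\tell_0>\ell_0$ so that Lemma \ref{l:easycross-dep} applies. Once this is done, the probability is at most $\e$.

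\textbf{Verifying \ref{assump:A2}.} For $\su\in\Lambda_{k'}$, the variable $\Easy(\Dep(\sigma_{k',k}))(\su)$ depends only on $\Dep(\omega_r)$, for $k'<r\le k$, on boxes inside $\su^{\boxup}$. Each such $\Dep(\omega_r)(\sv)$ depends on $\omega_r$ within $\ell^1$-distance $2^{-r}$ of $\cen(\sv)$. Hence $\Easy(\Dep(\sigma_{k',k}))(\su)$ is measurable with respect to $\{\omega_r(\sw) : k'<r\le k\}$, where $\sw$ ranges over boxes within the $2^{-r}\le 2^{-k'-1}$-neighbourhood of $\su^{\boxup}$.

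Take distinct $\su,\su'\in\Lambda_{k'}^{(\mu,\nu)}$. Their bottom-left corners differ by at least $3\cdot2^{-k'}$ in some coordinate, while $\su^{\boxup}$ has side $2\cdot 2^{-k'}$. So the two enlarged regions are separated by a gap of at least $2^{-k'}$, which exceeds the combined thickening of $2\cdot 2^{-k'-1}$ (with a boundary check on half-open boxes). The dependence sets are therefore disjoint. Independence of the $\omega_r(\cdot)$ then gives independence of the family $\{\Easy(\Dep(\sigma_{k',k}))(\su) : \su\in\Lambda_{k'}^{(\mu,\nu)}\}$.

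With both assumptions verified, Proposition \ref{p:onescale} yields exactly \eqref{e:onescale-dep}. The constants depend only on $\tell_0(\e_0)$, hence only on $p$.
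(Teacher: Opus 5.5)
Your overall route is the paper's: verify \ref{assump:A1} from Lemma \ref{l:easycross-dep}, verify \ref{assump:A2} from finite-range dependence plus the spacing of $\Lambda_{k'}^{(\mu,\nu)}$, then read \eqref{e:onescale-dep} off \eqref{e:onescale}. Your \ref{assump:A2} check is correct, and tidier than the paper's. The dependence region of $\Easy(\Dep(\sigma_{k',k}))(\su)$ is $\su^{\boxup}$ thickened by one layer of $2^{-r}$-boxes ($r>k'$). The gap of $2^{-k'}$ between the enlarged boxes of two sublattice points absorbs both thickenings, each of size at most $2^{-k'-1}$.

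The second half of your \ref{assump:A1} paragraph, however, does not work. You try to bound the crossing of the unit rectangle $[0,2)\times[0,1)$ in $\tsigma_{k-\tell_0,k}$ for $k>\tell_0$. After rescaling, this is a crossing of a rectangle $2^{k-\tell_0}$ times larger in a depth-$\tell_0$ configuration.
\begin{itemize}
\item A vertical crossing of its bottom row need not vertically cross any single $2\times1$ tile, because it can drift right through many tiles. Remark \ref{r:easy} only gives some easy crossing of some unit-scale $2\times 1$ or $1\times 2$ rectangle.
\item Even granting a containment in a union of single-tile events, a union bound over $\asymp 2^{k-\tell_0}$ tiles gives nothing uniform in $k$.
\end{itemize}
The hedge ``contained in, or equal in law to'' hides exactly this. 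Likewise, ``$\tsigma_{k-\tell_0,k}\subset$ a copy of $\Dep(\sigma_{\tell_0})$'' is not a meaningful inclusion.

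What rescues your proof is that you never need this version. In the proof of Proposition \ref{p:onescale}, \ref{assump:A1} enters only through the on-scale events $V(\tsigma_{k-\tell,k},\su)$ and $H(\tsigma_{k-\tell,k},\su)$, i.e.\ through the instance $k=\tell_0$ of \ref{assump:A1}. Translate by $\bl(\su)+(0,2^{-(k-\tell)})\in 2^{-(k-\tell)}\Z^2$ and rescale by $2^{k-\tell}$. This is the identification in your first paragraph, and it uses that $\Dep$ is the same local rule at every scale. It turns $V$ into the event of Lemma \ref{l:easycross-dep} at depth $\tell$, which exceeds $\ell_0$ by your choice $\tell_0=\ell_0+1$; $H$ becomes its reflection across $y=x$. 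That is all the paper's one-line appeal to Lemma \ref{l:easycross-dep} uses, so you should simply delete the tiling argument. (The literal statement for all $k\ge\tell_0$ would need a genuinely extra renormalization step, e.g.\ a Peierls bound on long directed chains of coarse boxes with $\Easy=1$ in a $1$-dependent field of small density. The argument here does not require it.)
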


\begin{proof}[Proof of Proposition \ref{p:onescale-dep}]
    By Lemma~\ref{l:easycross-dep}, the crossing estimate \ref{assump:A1} in Proposition~\ref{p:onescale} holds for the fractal percolation field $(\{\Dep(\omega_k)\}_{k \ge 1}, \{\Dep(\sigma_{k',k})\}_{k>k'\ge 0})$ for any $p\in(0,1)$. 
    It is also not hard to see that \ref{assump:A2} holds for this model.
    Indeed, for any box $\su\in\Lambda_{k'}$, by \eqref{e:def-dep} the random variable $\Dep(\sigma_{k',k})(\su)$ depends only on $\omega_r(\sv)$ for $k<r\le k'$ and boxes $\sv\in\Lambda_r$ whose centers lie within distance $2^{-k'}$ of $\cen(\su)$.
    Recalling the definition of $\Easy$ from \eqref{e:def-easy}, we deduce that $\Easy(\Dep(\sigma_{k',k}))(\su)$ depends only  on $\omega_r(\sv)$ for $k<r\le k'$ and boxes $\sv\in\Lambda_r$ whose centers lie within distance $2\cdot 2^{-k'}$ of $\cen(\su)$.
    Therefore, since the boxes in the separated lattice $\Lambda_{k'}^{(\mu,\nu)}$ have centers separated by distance at least $3\cdot 2^{-k'}$ (recall \eqref{e:separated-lattice}), it follows that $\Easy(\Dep(\sigma_{k',k}))$ restricted to $\Lambda^{(\mu,\nu)}_{k'}$ is an i.i.d. field.
    We can therefore apply Proposition \ref{p:onescale}, which completes the proof.
\end{proof}

We are ready to prove Theorem \ref{t:sublinear}.
The argument, previously outlined in Section \ref{s:iop}, consists of an inductive multi-scale analysis.
For the reader's convenience, we now provide a slightly more detailed proof sketch. 
We begin by fixing any directed path $\pi=\pi_n$ and analyzing how its length decreases due to the percolation noise from the first $C\log n$ many scales (the reason for choosing $C\log n$ will be clarified momentarily).
An application of Proposition \ref{p:onescale-dep} allows us to  remove a constant fraction of $2^{-C\log n}$-boxes along $\pi$. The definition of the dependent percolation model considered further ensures that $\pi$  spends a reasonable amount of time inside each deleted $2^{-C\log n}$-box, permitting us to in fact  remove a constant fraction (say $1\%$) of $2^{-n}$-boxes along $\pi$.

Next, we consider the remaining segments of $\pi$, and reveal the percolation noise at the subsequent $C\log n$ scales.
Applying Proposition \ref{p:onescale-dep} to these segments, the total retained length decreases by a further $1\%$. This procedure is iterated through successive scales until we reach the finest few scales. Ultimately, the total length of the retained portion of $\pi$ is at most $0.99^{n/C\log n}$ times the original length, which is precisely the statement of Theorem \ref{t:sublinear}.
\begin{remark}\label{gap45}
Note that the above strategy demands  a uniform estimate holding with high probability for all the path segments whose locations produce a large entropy factor. Skipping every $C\log n$ many scales allows one to counter it. At scale $k_{i+1}$,  the failure probability in Proposition \ref{p:onescale-dep}  is $\exp(-O(2^{k_{i+1}-k_i}))$, which for $k_{i+1}-k_i = C\log n$ can be made super-polynomial in $2^{-n}$ by choosing $C$ large, permitting a union bound over all possible endpoints of such paths. 
However, we expect that with a bit more work a more efficient implementation can be carried out only requiring that the above bound holds on  average at every scale and at every location. This will allow us to replace the $\log n$ scale separation by a large constant, thereby removing the lossy $\log n$ factor in Theorem \ref{t:sublinear} (an argument of this flavor indeed appears in \cite{DZLiouvilleFirstPassage2019}). However, to prevent things from getting overly technical we do not pursue this here.
\end{remark}

\begin{proof}[Proof of Theorem \ref{t:sublinear}]

Fix $p\in(0,1)$ and let $\ell=\ell(p)\ge 1$ be the fixed integer in Proposition \ref{p:onescale-dep}.
We consider a sparse subsequence $\{k_i\}_{i\in\lb 0, I\rb} \subset \lb 0, n\rb$ of dyadic scales, where
\begin{align}\label{not7684}
    k_{i} := i\cdot \Bigl(
        10\lceil\log_2 n\rceil +\ell
        \Bigr)
        \quad\text{for } i\in \lb 0, I\rb,
        \qquad\qquad I:= \left\lfloor \frac{n}{10\ceil{\log_2 n} + \ell}\right\rfloor.
\end{align}
We assume from now on that $n$ is large enough so that $I \ge 2$ and $k_1 \ge \ell + 7$.
For any $i\in\lb 1,I\rb$ and any $\mathsf{x},\mathsf{y}\in \Lambda_{k_i}\cap [0,1]^2$ satisfying $\cen(\sx)\preceq \cen(\sy)$ and $\dist(\bl(\mathsf{x}),\tr(\mathsf{y}))\ge 2^{-k_{i-1}+1}$,
we denote by $\mathcal{G}(i;\mathsf{x},\mathsf{y})$  the following event:
    for any directed path $\pi_{k_i}$ in $\Lambda_{k_i}$ connecting  $\cen(\mathsf{x})$ and $\cen(\mathsf{y})$, there exists 
    $(\mu,\nu)\in \{1,2,3\}^{2}$ 
    such that 
    \begin{align}\label{one event}
         \left|\left\{\mathsf{u} \in \Proj_{k_i-\ell}(\pi_{k_i}) \cap \Lambda_{k_i-\ell}^{(\mu,\nu)}:\Easy(\Dep(\sigma_{k_{i} - \ell,k_i}))(\mathsf{u})=0\right\}\right| 
         \ge 
        \al\cdot 
        2^{k_i} \cdot \dist(\cen(\sx),\cen(\sy)).
    \end{align}
Note that for $i = 1$, the only admissible choices for $\mathsf{x}$ and $\mathsf{y}$ are the boxes at the bottom-left and top-right corners of $\Lambda_{k_1}\cap [0,1]^{2}$ and $\mathcal{G}(1;\mathsf{x},\mathsf{y})$ is simply the event in \eqref{e:onescale-dep} with $a = (0,0)$, $b = (1-2^{-k_1},1-2^{-k_1})$, and $k = k_1$.
Therefore, by Proposition \ref{p:onescale-dep}, we have
\begin{align}\label{one event2 eq}
    \bP(\mathcal{G}(1;\mathsf{x},\mathsf{y})^{c})
    \le 
    e^{-c 2^{k_1}}
    = 
    e^{-c2^{10\ceil{\log_2 n} + \ell}}
    \le
    e^{-cn^{10}}.
\end{align}
We next establish a matching probability bound for $\mathcal{G}(i;\sx,\sy)$ for $i\in\lb 2,I\rb$.
Fix $i\in\lb 2,I\rb$ and $\sx,\sy \in \Lambda_{k_i}$ with $\cen(\sx) \preceq \cen(\sy)$ and $\dist(\bl(\sx),\tr(\sy)) \ge  2^{-k_{i-1} + 1}$.

In fact the event $\mathcal{G}(i;\mathsf{x},\mathsf{y})$ is a rescaled version of the event in \eqref{e:onescale-dep} for $a = \cen(\sx)$, $b = \cen(\sy)$, and $k := k_i-k_{i-1}$.
Indeed, if we rescale the lattice $\Lambda_{k_{i-1}}$ and regard it as $\Lambda_{0}$, then the rescaled distance between any pair of points $(u,v)$ is $\dist(u,v) \cdot 2^{k_{i-1}}$.
Then our choice of $(a,b) = (\cen(\sx), \cen(\sy))$ satisfies the distance condition in Proposition \ref{p:onescale-dep} since if we denote by $\tilde{\sx}, \tilde{\sy}$ the unique boxes in $\Lambda_{k_i - \ell}$ such that $\cen(\sx) \in \tilde{\sx}$ and $\cen(\sy) \in \tilde{\sy}$, then
\[
    \dist(\bl(\tilde{\sx}),\tr(\tilde{\sy})) \cdot 2^{k_{i-1}} \ge \dist(\bl(\sx),\tr(\sy)) \cdot 2^{k_{i-1}}\ge 2^{-k_{i-1} + 1}\cdot 2^{k_{i-1}} = 2.
\]
Moreover, under this rescaling of the lattice $\Lambda_{k_{i-1}}$, the configuration $\sigma_{k_i - \ell,k_i}$ plays the role of $\sigma_{k_i - k_{i-1} - \ell, k_i - k_{i-1}}$ in Proposition \ref{p:onescale-dep}.
Therefore, we can rewrite \eqref{one event} as 
\begin{align*}
\left|\left\{\mathsf{u} \in \Proj_{k_i-\ell}(\pi_{k_i}) \cap \Lambda_{k_i-\ell}^{(\mu,\nu)}:\Easy(\Dep(\sigma_{k_{i} - \ell,k_i}))(\mathsf{u})=0\right\}\right| 
        \ge
        \al\cdot 
        \left(2^{k_{i-1}} \cdot \dist(\cen(\sx),\cen(\sy))\right)
        \cdot 2^{k_i - k_{i-1}},
\end{align*}
which is precisely the form of \eqref{e:onescale-dep} after rescaling the lattice and the percolation configuration; see Figure~\ref{f:rescaling} for an illustration.
In particular, by Proposition \ref{p:onescale-dep} we have the following probability bound for $\mathcal{G}(i;\mathsf{x},\mathsf{y})$:
\begin{align}\label{one event eq}
    \bP(\mathcal{G}(i;\mathsf{x},\mathsf{y})^{c}) 
    \le
    e^{-c 2^{k_i-k_{i-1}}}
    = e^{-c 2^{10\lceil\log_2 n\rceil +\ell}}
    \le e^{-cn^{10}}.
\end{align}

\begin{figure}[h]
    \includegraphics[width=0.45\textwidth]{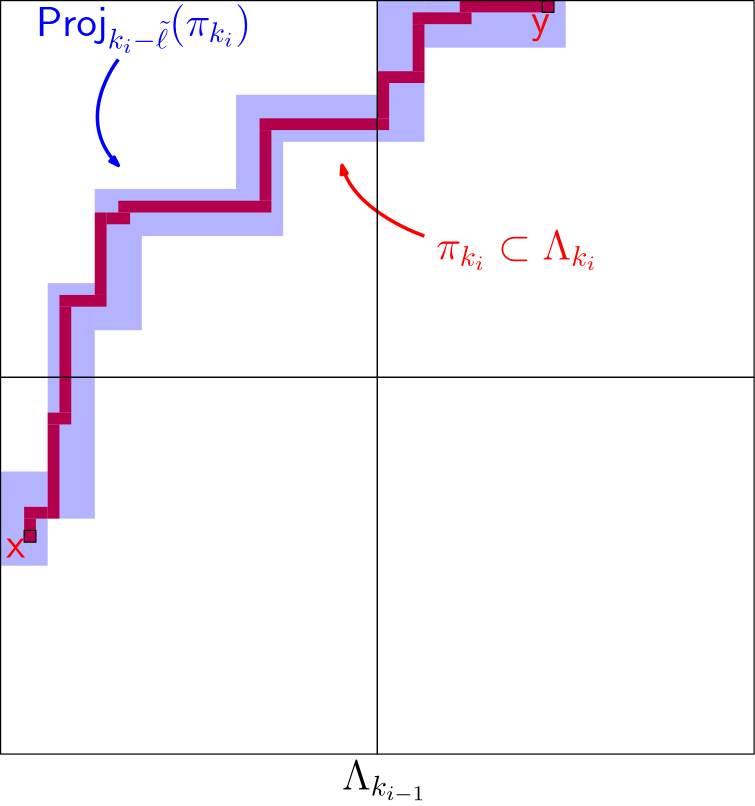}
    \caption{Rescale the lattice and treat $\Lambda_{k_{i-1}}$ as $\Lambda_0$. The event $\mathcal{G}(i;\sx,\sy)$ is a rescaled version of the event in \eqref{e:onescale-dep}: let $a$ and $b$ be the centers of $\sx$ and $\sy$ respectively (the red boxes), then on the event $\mathcal{G}(i;\sx,\sy)$, for any directed path $\pi_{k_i} \subset \Lambda_{k_i}$ connecting $a$ and $b$ (the red path), the projection $\Proj_{k_i-\tell}(\pi_{k_i})$ (the blue path) contains at least a constant fraction of deleted $2^{-k_i+\tell}\times 2^{-k_i+\tell}$ boxes in $\Easy(\Dep(\sigma_{k_{i} - \tell,k_i}))$.}
    \label{f:rescaling}
\end{figure}

Note that the probability bounds \eqref{one event2 eq}, \eqref{one event eq} are super-polynomially small in $2^{-n}$, due to our choice of dyadic scales separated by gaps of size $k_i-k_{i-1}\sim 10\log_2n$.
We now exploit this by taking a union bound over all pairs of boxes $\sx,\sy$ across all scales $k_i$.
Denoting 
\[
    \sC_{k}:=\{(\sx,\sy): \sx,\sy \in \Lambda_{k_i}\cap [0,1]^2,\; \cen(\sx) \preceq \cen(\sy),\; \dist(\bl(\sx),\tr(\sy))\ge 2^{-k_{i-1}+1}\}
\] 
and defining the event 
\begin{align}\label{e:event-G}
    \mathcal{G} := 
    \bigcap_{i=1}^I
    \; \bigcap_{(\sx,\sy)\in \sC_{k_i}} 
    \mathcal{G}(i;\mathsf{x},\mathsf{y}),
\end{align}
we obtain by a union bound
\begin{align*}
    \bP(\mathcal{G}^{c}) &\le \sum_{i=1}^{I}\sum_{(\sx,\sy)\in \sC_{k_i}}\bP\left(\mathcal{G}(i;\mathsf{x},\mathsf{y})^{c}\right) \\
    \overset{\eqref{one event2 eq}, \eqref{one event eq}}&{\le} \sum_{i=1}^{I}\sum_{(\sx,\sy)\in \sC_{k_i}} e^{-cn^{10}}\\
    &\le \sum_{i=1}^{I}2^{4k_i}\cdot e^{-cn^{10}}\\
    &\le C 2^{4n}\cdot e^{-cn^{10}}\\
    &\le e^{-c' n^{10}},
\end{align*}
where the third inequality follows from the crude bound
\begin{align*}
    |\sC_{k_i}|
    \le \left(\Size_{k_i}(\Lambda_{k_i} \cap[0,1]^2)\right)^{2} 
    = 2^{4k_i}.
\end{align*}

Next we will show that on the event $\mathcal{G}$,
for any directed path $\pi\in\Pi_n((0,0),(1-2^{-n},1-2^{-n}))$, its passage time satisfies $N_{n}(\pi) \le 2^{n}\cdot e^{-cn/\log n}$.
This combined with the above estimate for $\P(\cG^c)$ immediately implies Theorem \ref{t:sublinear}. 
So we assume from now on that $\mathcal{G}$ occurs, and we fix a directed path $\pi\in\Pi_n$.
We will frequently refer to $\pi$ as the \emph{microscopic path}, to emphasize that it lives on the finest lattice $\Lambda_n$ under consideration.

We now inductively bound the passage time of $\pi$.
We separately analyze the base case (Step 1) and the inductive step (Step 2).

\bigskip
\noindent\textbf{Step 1 (base case).}
We begin by analyzing how $\Dep(\sigma_{k_1-\ell, k_1})$ affects the passage time, aiming to show that the length of $\pi$ decreases by a constant factor due to the deletion of boxes at scale $k_1$.
By our assumption that $\mathcal{G}(1;\sa,\sfb)$ occurs with $\cen(\sa)=(0,0)$ and $\cen(\sfb)=(1-2^{-k_1},1-2^{-k_1})$,
there exists $(\mu,\nu)\in\{1,2,3\}^2$ such that \eqref{e:onescale-dep} holds for $\Proj_{k_1}(\pi)$ on $\Lambda^{(\mu,\nu)}_{k_1-\ell}$.
To ease notation, we say that a box $\su\in\Lambda_{k_1-\ell}$ is \Bad if $\Easy(\Dep(\sigma_{k_1-\ell, k_1}))(\su)=0$ (so the notion of {\Bad}ness implicitly depends on $k_1$).
We denote by $\mathsf{B}_{k_1-\ell}(\pi)$ the collection of \Bad boxes that $\pi$ passes through. 

Now we recall Remark~\ref{r:easy}: within each \Bad box $\mathsf{u}\in \mathsf{B}_{k_1-\ell}(\pi)$ there exists a $2^{-k_1}\times 2^{-k_1}$ box $\mathsf{v}\in \mathsf{u}^{\boxup} \cap \Proj_{k_1}(\pi)$ with $\Dep(\sigma_{k_1-\ell, k_1})(\mathsf{v})=0$, where $\su^{\boxup}$ is defined in \eqref{e:boxup}. Indeed, otherwise the projection $\Proj_{k_1}(\pi)$ would form an easy crossing inside $\su^{\boxup}$, contradicting the {\Bad}ness of $\su$; see Figure~\ref{f:easycrossing} for two possible cases of how $\Proj_{k_1}(\pi)$ can traverse $\su^{\boxup}$ and Figure~\ref{f:major} for the identification of $\sv$ mentioned above.

Next, using directedness and the definition of $\Dep$, we will find a deleted box $\sm\in\su^{\boxup}\cap\Proj_{k_1}(\pi)$ inside which the microscopic path $\pi$ spends a long time.
We say that a box $\sm \in \Proj_{k}(\pi)$ is \Major
(at level $k$, and with respect to $\pi$)
if
\begin{align*}
    \Size_{n}(\pi \cap \sm) \ge 2^{n-k-1}.
\end{align*}
Note that by directedness we have $\Size_{n}(\pi \cap \sm) \le 2^{n-k+1}-1$ for any $\sm\in\Lambda_k$, so a box being \Major just means that $\pi$ occupies that box for a constant fraction of the maximum time possible.
The key deterministic result ensuring the existence of deleted \Major boxes is the following lemma.

\begin{lemma}\label{claim1}
    If $\su\in\sB_{k_1-\ell}(\pi)$ is a \Bad box, then $\su^{\boxup}$ contains a deleted \Major box at level $k_1$.
\end{lemma}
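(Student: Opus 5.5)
The plan is to reduce the lemma to a purely geometric statement about how long a directed microscopic path can stay inside adjacent level-$k_1$ boxes. First we locate a suitable deleted box. By Remark~\ref{r:easy} and the {\Bad}ness of $\su$, there is a box $\sv\in\su^{\boxup}\cap\Proj_{k_1}(\pi)$ with $\Dep(\sigma_{k_1-\ell,k_1})(\sv)=0$. We take $\sv$ to be a box of $\Proj_{k_1}(\pi)$ lying on the portion of the crossing of the relevant $2\times1$ rectangle $R\subset\su^{\boxup}$. By definition of $\Dep(\sigma_{k_1-\ell,k_1})$ there is some $r\in\lb k_1-\ell+1,k_1\rb$ with $\Dep(\omega_r)(\sv')=0$, where $\sv'\in\Lambda_r$ is the ancestor of $\sv$. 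Thus $\sv'$ and its four neighbours in $\Lambda_r$ all have $\omega_r=0$. Since $r\le k_1$, every level-$k_1$ box adjacent to $\sv$ lies in $\sv'$ or in one of its four neighbours. Hence $\sv$ and all four of its level-$k_1$ neighbours are deleted in $\sigma_{k_1-\ell,k_1}$, and therefore also in $\sigma_{k_1}$. It then suffices to show that one of $\sv$, its predecessor, or its successor along $\Proj_{k_1}(\pi)$ is \Major and lies in $\su^{\boxup}$.

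The core step is the following elementary estimate. Set $s:=2^{n-k_1}$ and measure positions inside a level-$k_1$ box in microscopic units relative to its bottom-left corner. If $\pi$ enters a box at relative position $(x,y)$, then by directedness and Lemma~\ref{length of path} it spends at least $\min(s-x,s-y)$ steps there before exiting. By time reversal, if it exits at relative position $(x,y)$ it spent at least $\min(x,y)$ steps there (up to $\pm1$ lattice corrections). Suppose $\pi$ spends fewer than $s/2$ steps in $\sv$. Then the entry and exit points of $\pi$ in $\sv$ are close to each other in $\ell^1$, which forces the passage to clip either the top-left or the bottom-right corner of $\sv$.

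Consider the top-left case, where $\pi$ enters through the left edge at height $y>s/2$ and exits through the top edge at horizontal offset $x<s/2$. The successor box is the top neighbour, entered at $(x,0)$, so $\pi$ spends at least $\min(s-x,s)>s/2$ steps there. The predecessor is the left neighbour, exited at $(s,y)$, so $\pi$ spent at least $\min(s,y)>s/2$ steps there. The bottom-right case is symmetric, with the roles of the two coordinates exchanged. So in every case, either $\sv$ itself is \Major, or both its predecessor and its successor are \Major, and all three are deleted.

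The degenerate cases also need checking. If $\sv$ contains an endpoint of $\pi$, that endpoint is a corner of $[0,1]^2$. The path then starts at the bottom-left corner of $\sv$ (or ends at its top-right corner), so $\sv$ is traversed for at least $s$ steps and is \Major.

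The step I expect to be the main obstacle is guaranteeing that the \Major box lies inside $\su^{\boxup}$. The predecessor or successor of $\sv$ might fall outside $\su^{\boxup}$, for instance when $\sv$ sits in the first or last row or column of $R$. To handle this I will choose $\sv$ more carefully: among deleted boxes on the crossing of $R$, pick one that is not the first box of $\Proj_{k_1}(\pi)$ in $\su^{\boxup}$ or the last, which should be possible by redoing the argument of Remark~\ref{r:easy} on the sub-crossing. Alternatively, I will argue directly that whenever $\pi$ clips a corner of a boundary box of $R$, the adjacent box on the relevant side (predecessor or successor) is still in $\su^{\boxup}$. This works because of the way a vertical crossing of the top strip, or a horizontal crossing of the right strip, must enter and leave $\su^{\boxup}$ through its left/bottom and right/top edges respectively. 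Once this is settled, the lemma follows by the case analysis above.
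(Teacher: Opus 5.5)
Your argument follows the paper's proof. Badness of $\su$ gives a $\Dep$-closed $\sv$ on the crossing of the strip $R$, its lattice neighbours are deleted, and two consecutive boxes of $\Proj_{k_1}(\pi)$ together carry at least $2^{n-k_1}$ microscopic steps. The paper states this last fact as $\Size_n(\sv\cap\pi)+\Size_n(\sv'\cap\pi)\ge 2^{n-k_1}$. Your corner-clipping computation proves it, and your conclusion ``either $\sv$ is \Major or both its path-neighbours are'' is equivalent to it.

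The step you left open, containment in $\su^{\boxup}$, is the one gap, and it is easy to close. Your option (2) points the right way. By your own dichotomy you only need \emph{one} path-neighbour of $\sv$ in $\su^{\boxup}$, and the predecessor always qualifies. Since $\sv$ lies on the crossing of $R$, we have $\sv\in\su^{\boxup}\setminus\su$. Because $\pi$ is directed, it visits the rectangle $\su^{\boxup}$ in one contiguous stretch; the argument of Lemma~\ref{nonbacktracking} works for any axis-parallel rectangle. That stretch begins in $\su$: once $\pi$ is in $\su^{\boxup}\setminus\su$, one of its coordinates is already too large for it ever to enter $\su$, yet $\pi$ passes through $\su$. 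Hence the box visited just before $\sv$ lies in $\su^{\boxup}$, and it is deleted because it is lattice-adjacent to $\sv$. Equivalently, the crossing of $R$ contains at least $2^{\ell}\ge 2$ level-$k_1$ boxes, all inside $R$, so $\sv$ has a path-neighbour there. This is what the paper's one-line claim, that a deleted $\sv'\in\su^{\boxup}$ is visited consecutively with $\sv$, rests on.

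Drop your option (1). Remark~\ref{r:easy} produces only one $\Dep$-closed box on the crossing, and it may be the last box of $\pi$ in $\su^{\boxup}$, so you cannot insist on one that is neither first nor last. With the predecessor observation inserted, your proof is complete.
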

\begin{proof}
Let $\sv\in\su^{\boxup}\cap\Proj_{k_1}(\pi)$ be the box with $\Dep(\sigma_{k_1-\ell, k_1})(\sv)=0$ whose existence is guaranteed by the {\Bad}ness of $\su$.
Then by definition of $\Dep$, the box $\sv$ and all the boxes adjacent to it are deleted.
In particular, there is a deleted box $\sv'\in\su^{\boxup}\cap\Proj_{k_1}(\pi)$ such that $\pi$ visits $\sv,\sv'$ consecutively.
Then at least one of $\sv,\sv'$ must be \Major, since by the directedness of $\pi$, for any two consecutive boxes $\mathsf{v}$ and $\mathsf{v}'$ on $\Proj_{k_1}(\pi)$, we have the inequality
\begin{align*}
    \Size_{n}(\mathsf{v} \cap \pi) + \Size_{n}(\mathsf{v}' \cap \pi) \ge 2^{n - k_1}.
\end{align*}
This finishes the proof.
\end{proof}

Using Lemma~\ref{claim1}, for each \Bad box $\su\in\Lambda^{(\mu,\nu)}_{k_1-\ell}$ we select a deleted \Major box $\sm\in\su^{\boxup}\cap\Proj_{k_1}(\pi)$; see Figure~\ref{f:major} for an illustration.
To avoid ambiguity we select the coordinate-wise smallest such box (but any other deterministic rule for breaking ties would also work).
We denote by $\Remove_{k_1}(\pi)$ the collection of selected deleted \Major boxes.

\begin{figure}[h]
\includegraphics[width=0.5\textwidth]{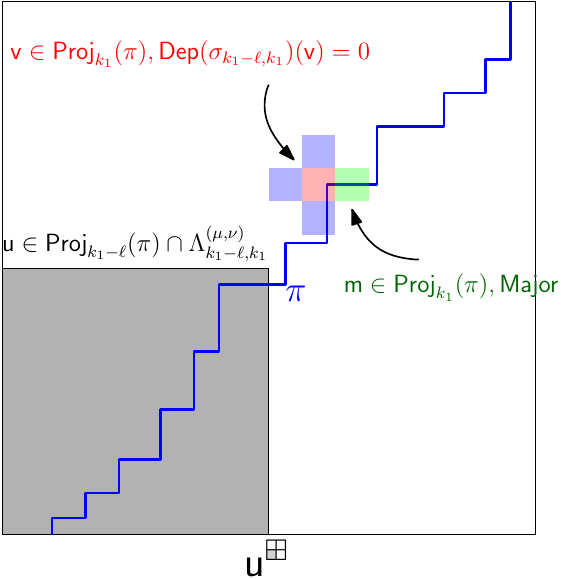}
\caption{Given that the gray box $\su$ is a \Bad box, $\pi$ (the blue path) must pass through a box $\sv \in \su^{\boxup}\cap\Lambda_{k_1} $ (the red box) satisfying $\Dep(\sigma_{k_1-\ell,k_1})(\sv) = 0$.
By the definition of $\Dep$, the box $\sv$ and its four neighbors are all deleted in $\sigma_{k_1-\ell,k_1}$.
Out of these five deleted boxes, there exists one within $\su^{\boxup}$ that is \Major (the green box $\sm$) as established in Claim~\ref{claim1}; we select this \Major box for $\Remove_{k_1}(\pi)$.}
\label{f:major}
\end{figure}

Since the boxes in $\Remove_{k_1}(\pi)$ are deleted, from now on we can just focus on the behavior of $\pi$ restricted to the complement of $\Remove_{k_1}(\pi)$. Before moving to Step 2, we record some observations about the removal set $\Remove_{k_1}(\pi)$ and its complement that will be crucial for iterating the above procedure. Let us at this point remark that while there are other boxes that are also deleted, we only include a carefully chosen subset of them in $\Remove_{k_1}(\pi)$ to maintain better geometric properties of the complement of the latter.  
\begin{lemma}\label{l:segments}
    The connected components of $\pi\setminus \bigcup_{\sv\in\Remove_{k_1}(\pi)}\sv$ are themselves directed paths in $\Lambda_n$.
    We refer to these paths as \emph{segments} of $\pi$, and denote them by
    \begin{align*}
        \Seg_{k_1}^{1},\Seg_{k_1}^{2},\dots, \Seg_{k_1}^{m_1}.
    \end{align*}
\end{lemma}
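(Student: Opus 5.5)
The plan is to combine the non-backtracking property (Lemma \ref{nonbacktracking}) with the fact that the removed boxes are disjoint dyadic boxes of a single level $k_1$. Write $\pi=(\su_1,\dots,\su_t)$ as a sequence of boxes in $\Lambda_n$. Each $\sm\in\Remove_{k_1}(\pi)$ is a box of $\Lambda_{k_1}$ with $k_1<n$, so every $\su_i$ is either contained in $\sm$ or disjoint from it. Consequently
\[
    \pi\setminus\bigcup_{\sv\in\Remove_{k_1}(\pi)}\sv
\]
is the union of those $\su_i$ that lie in no removed box, and it suffices to understand this index set.

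\textbf{Step 1.} Fix $\sm\in\Remove_{k_1}(\pi)$. By Lemma \ref{nonbacktracking}, the set $\{i:\su_i\subset\sm\}$ is a contiguous interval $\lb t_0(\sm),t_1(\sm)-1\rb$. Indeed, $\pi$ first meets $\sm$ at $t_0$ and leaves it at $t_1$. Since the boxes are nested, ``$\su_i\cap\sm\neq\varnothing$'' is the same as ``$\su_i\subset\sm$'', and after time $t_1$ the path never returns to $\sm$.

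\textbf{Step 2.} The removed boxes are distinct elements of $\Lambda_{k_1}$, hence pairwise disjoint. The intervals from Step 1 are therefore pairwise disjoint subintervals of $\lb 1,t\rb$, and their complement in $\lb 1,t\rb$ is a finite disjoint union of maximal intervals $J_1,\dots,J_{m_1}$, ordered increasingly.

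\textbf{Step 3.} For each $J_j=\lb s,s'\rb$, the box sequence $(\su_s,\dots,\su_{s'})$ is a consecutive subsequence of $\pi$, so it is itself a directed path in $\Lambda_n$; its union is connected in $\R^2$ because consecutive boxes share an edge. It remains to check that different $J_j$'s give different connected components. Consecutive intervals $J_j$ and $J_{j+1}$ are separated by at least one index $i$ with $\su_i$ contained in a removed box. It is enough to argue combinatorially: define segments to be the maximal runs of consecutive boxes, and observe that two boxes in different runs cannot be edge-adjacent as consecutive steps of the path.

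\textbf{Main obstacle.} Point-set connectedness is the delicate part. Half-open boxes from different runs might still touch at a boundary point. For example, after a removed box the path could later pass diagonally next to an earlier segment. The cleanest fix is to interpret ``connected components'' through the ordering along the path, that is, the maximal runs above; this is all that later use requires. If a genuinely topological statement is needed, I would argue as follows. By directedness, a box of a later run is $\succeq$ a removed box that is itself $\succeq$ the boxes of the earlier run, which forces a coordinate gap of at least $2^{-n}$ in one direction. Together with the half-open convention of Remark \ref{r:half-open}, this should give disjointness of closures up to corner contacts. I would simply define the segments as the maximal runs and note that this is what ``connected components'' means here.
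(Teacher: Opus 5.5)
Your Steps 1--3 are the paper's argument (directedness plus the non-backtracking Lemma~\ref{nonbacktracking}) written out in full, and they correctly show that the maximal runs of $\pi$ outside the removed boxes are directed paths in $\Lambda_n$. The topological worry does not need to be settled by reinterpreting ``connected components.'' Two boxes $\su_a,\su_b$ ($a<b$) of a directed path have connected union only if $b-a\le 2$. The only dangerous case is therefore a removed box that $\pi$ meets in a single $\Lambda_n$-box, which produces a north-east corner contact; such a contact \emph{does} connect half-open boxes, so your ``up to corner contacts'' sketch would not suffice. That case is excluded because every removed box is \Major: $\pi$ spends at least $2^{n-k_1-1}\ge 2$ consecutive steps inside it. Hence boxes of different runs are at least three steps apart, separated by a gap of width $2^{-n}$ in some coordinate, and the runs are literally the connected components.
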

\begin{proof}
    This is an immediate consequence of the directedness of $\pi$ and the non-backtracking property Lemma~\ref{nonbacktracking}.
\end{proof}

The next lemma is a uniform lower bound for the length of segments $\{\Seg_{k_1}^{i}\}_{i=1}^{m_1}$.

\begin{lemma}\label{l:separation}
    For any $1 \leq i \leq m_1$, 
    \begin{center}
        \textup{the $\dist$-distance between the bottom-left corner of the first box in $\Proj_{k_2}(\Seg_{k_1}^{i})$ and the top-right corner of the last box in $\Proj_{k_2}(\Seg_{k_1}^{i})$ is at least $2^{-(k_1-\ell)}$.}
    \end{center}
\end{lemma}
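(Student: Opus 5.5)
The plan is to exploit the fact that consecutive removed boxes in $\Remove_{k_1}(\pi)$ are far apart. Each removed box $\sm$ was selected inside $\su^{\boxup}$ for some \Bad box $\su\in\Lambda^{(\mu,\nu)}_{k_1-\ell}$, with $(\mu,\nu)$ fixed. Distinct boxes of this separated sublattice have bottom-left corners differing by a nonzero vector in $3\cdot 2^{-(k_1-\ell)}\Z^2$. Hence the enlarged boxes $\su^{\boxup}$, of side $2\cdot 2^{-(k_1-\ell)}$, are disjoint. Moreover, two such enlarged boxes that the directed path visits in order are separated by a gap of at least $2^{-(k_1-\ell)}$ in at least one coordinate direction.

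The first step is to describe a segment. By Lemma~\ref{l:segments} and Lemma~\ref{nonbacktracking}, each interior segment $\Seg^i_{k_1}$ starts right after $\pi$ exits some removed box $\sm\subset\su^{\boxup}$ and ends right before $\pi$ enters the next removed box $\sm'\subset\su'^{\boxup}$, with $\su\ne\su'$. The two endpoint segments instead start at $(0,0)$ or end at $(1-2^{-n},1-2^{-n})$.

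The second step is to lower bound the $\ell^1$ displacement of an interior segment. By directedness, $\cen(\su)\preceq\cen(\su')$ coordinatewise. Because $\su\neq\su'$ lie in the same $3\cdot2^{-(k_1-\ell)}$-lattice, some coordinate, say the first, satisfies $\bl(\su')_1-\bl(\su)_1\ge 3\cdot 2^{-(k_1-\ell)}$. The segment begins at a point whose first coordinate is at most $\bl(\su)_1+2\cdot2^{-(k_1-\ell)}$, since it lies in or on the boundary of $\su^{\boxup}$. It ends at a point whose first coordinate is at least $\bl(\su')_1$. So the first coordinate increases by at least $2^{-(k_1-\ell)}$ along the segment, and the other coordinate does not decrease.

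The last step passes to the projection. The first box of $\Proj_{k_2}(\Seg^i_{k_1})$ contains the starting point, so its bottom-left corner is coordinatewise $\preceq$ that point. Likewise the top-right corner of the last box is $\succeq$ the endpoint. Therefore the $\dist$-distance between these corners is at least the $\ell^1$ displacement of the segment, which is at least $2^{-(k_1-\ell)}$. This uses $k_2>k_1$, so that $\Proj_{k_2}$ is finer than the removed boxes.

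For the endpoint segments, the segment runs from a corner of $[0,1]^2$ to the first (or from the last) removed box. Both coordinates move monotonically, and the removed box lies at $\ell^1$ distance at least $2^{-(k_1-\ell)}$ from the corner unless it abuts it. I would handle this case by noting that if the removed box touches the corner, the segment is empty and can be discarded. Otherwise the same coordinate gap argument applies using the lattice offset $(\mu,\nu)\in\{1,2,3\}^2$.

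I expect the main obstacle to be the bookkeeping at the boundary. Points on the boundary of $\su^{\boxup}$ may belong to either side under the half-open convention, so one must check that the gap is $2^{-(k_1-\ell)}$ and not $2^{-(k_1-\ell)}-2^{-n}$. The remedy is to measure from $\bl$ of the first box and $\tr$ of the last box rather than from centers, which absorbs the one-box slack.
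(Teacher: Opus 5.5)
Your argument for \emph{interior} segments (those lying between two consecutive removed boxes) is correct, and it is the paper's argument. The paper's proof consists of two facts. First, distinct removed boxes lie in distinct enlarged boxes $\su^{\boxup}$ with $\su\in\Lambda^{(\mu,\nu)}_{k_1-\ell}$, and these enlarged boxes are $\dist$-separated by $2^{-(k_1-\ell)}$. Second, by Lemma~\ref{nonbacktracking}, the first and last boxes of $\Proj_{k_2}(\Seg^i_{k_1})$ are adjacent to distinct removed boxes. Your coordinatewise computation (offset $3\cdot 2^{-(k_1-\ell)}$ versus width $2\cdot 2^{-(k_1-\ell)}$, together with $\bl\preceq$ start point and $\tr\succeq$ end point) spells out what the paper leaves implicit. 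One small point: $\bl(\su')-\bl(\su)\in 3\cdot 2^{-(k_1-\ell)}\Z_{\ge 0}^2$ does not follow from directedness alone. You also need that a negative lattice offset would place all of $\su'^{\boxup}$ strictly to the left of or below $\su^{\boxup}$, which is a one-line check.

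Your endpoint-segment paragraph does not work, and no argument can make it work. The dichotomy ``the removed box either abuts the corner or lies at $\ell^1$-distance at least $2^{-(k_1-\ell)}$ from it'' is false, because removed boxes are $2^{-k_1}$-boxes and may sit anywhere inside $\su^{\boxup}$. The lattice offset does not save you. If $(\mu,\nu)=(3,3)$, the corner box $\su=[0,2^{-(k_1-\ell)})^2$ belongs to $\Lambda^{(\mu,\nu)}_{k_1-\ell}$, since $\mu=3$ allows $\bl(\su)_1=0$. Suppose this $\su$ is \Bad, the corner $2^{-k_1}$-box is retained, and the selected deleted \Major box is $[0,2^{-k_1})\times[2^{-k_1},2^{1-k_1})$. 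Then $\Seg^1_{k_1}=\pi\cap[0,2^{-k_1})^2$ is nonempty, yet its projected $\bl$-to-$\tr$ distance is at most $2^{1-k_1}<2^{-(k_1-\ell)}$ once $\ell\ge 2$. The same thing can happen at the top-right end for several choices of $(\mu,\nu)$. So the statement, read literally for $i\in\{1,m_1\}$, can fail. The paper's proof silently covers only interior segments: its claim that both ends of every segment are adjacent to removed boxes is false for the first and last segments. The correct repair is not a sharper boundary estimate. Instead, restrict the lemma to interior segments and treat the at most two boundary segments separately in the multiscale iteration.
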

\begin{proof}
    Note that any two distinct removed boxes in $\Remove_{k_1}(\pi)$ must lie in $\su^{\boxup}$ for distinct \Bad boxes $\su\in\Lambda_{k_1-\ell}^{(\mu,\nu)}$, since we selected just one \Major box per \Bad box.
    By the well-separatedness of $\Lambda_{k_1-\ell}^{(\mu,\nu)}$ (recall \eqref{e:separated-lattice}), the $\dist$-distance between such $\su^{\boxup}$ is at least $2^{-(k_1-\ell)}$. Meanwhile, by Lemma \ref{nonbacktracking}, for each segment $\Seg_{k_1}^{i}$, the first and last $2^{-k_2}\times 2^{-k_2}$ boxes of its projection $\Proj_{k_2}(\Seg_{k_1}^{i})$ must be adjacent to \emph{distinct} removed boxes in $\Remove_{k_1}(\pi)$.
    These two facts together imply the desired distance estimate.
\end{proof}

This distance estimate will allow us to later apply \eqref{one event} with $i=2$ (since $2^{-(k_1-\ell)} > 2^{-k_2+1}$) for $\Proj_{k_2}(\Seg_{k_1}^{i})$, $1\leq i\leq m_1$, to show that on the event $\mathcal{G}$, a constant fraction of the $2^{-k_2+\ell}\times 2^{-k_2+\ell}$ boxes $\su$ along each segment satisfy $\Dep(\sigma_{k_2-\ell,k_2})(\su)=0$; See Figure~\ref{f:seg}.

\begin{figure}[h]
        \includegraphics[width=0.5\textwidth]{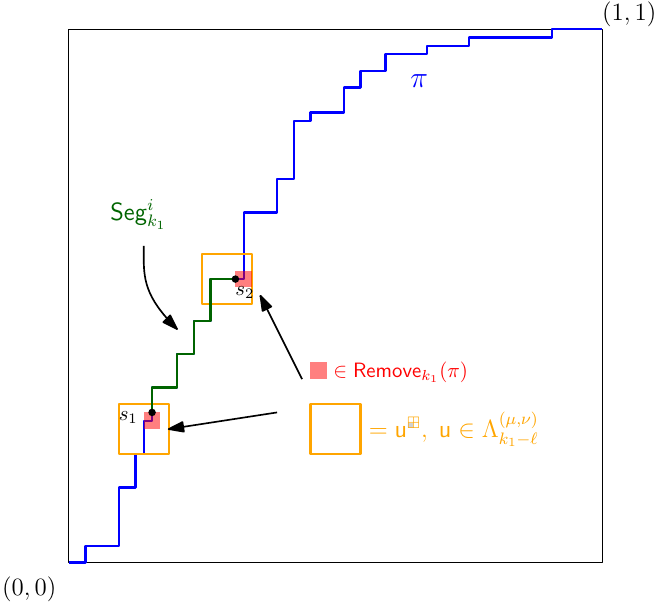}
        \caption{Illustrations of removing deleted boxes at one scale: The orange boxes represent $\su^{\boxup}$ for the $\Bad$ boxes $\su$ found in $\Lambda_{k_1-\ell}^{(\mu,\nu)}$. Following the removal procedure in Step 1, within each chosen $\su^{\boxup}$, we select a $\Major$ box (colored red) in $\Lambda_{k_1}$ that is deleted in $\sigma_{k_1-\ell,k_1}$. Doing so creates several segments of $\pi$ (such as the green segment). Due to the well-separatedness of the $\Bad$ boxes in $\Lambda_{k_1-\ell}^{(\mu,\nu)}$, for each segment, the $\dist$-distance between the bottom-left corner of the first box and the top-right corner of the last box in $\Proj_{k_2}(\Seg_{k_1}^{i})$ is at least $2^{-(k_1-\ell)}$ for each $1\leq i\leq m_1$.}
        \label{f:seg}
    \end{figure}

The last observation needed before proceeding to Step 2 is that the total length of the segments is at most a constant fraction of the length of $\pi$ 
(recall that the length of $\pi$ is $\Size_n(\pi) = 2^{n+1} - 1$).
\begin{lemma}\label{l:onescaleloss}
    For $\alpha>0$ defined in Proposition~\ref{p:onescale-dep}, we have
    \begin{align}\label{e:remaining-length}
    \Size_{n}\left(
        \bigcup_{i=1}^{m_{1}} \Seg_{k_1}^{i}
    \right) \leq \left(1 - \frac{\al}{8}\right) \cdot  2^{n + 1}.
    \end{align}
\end{lemma}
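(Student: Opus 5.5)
The plan is to count the microscopic boxes of $\pi$ that lie in the removed boxes, and show that they make up at least an $\al/8$ fraction of $2^{n+1}$. By Lemma~\ref{l:segments}, the segments are exactly $\pi$ with the boxes of $\Remove_{k_1}(\pi)$ deleted, so
\[
    \Size_n\Bigl(\bigcup_{i=1}^{m_1}\Seg_{k_1}^i\Bigr)
    = \Size_n(\pi) - \sum_{\sm\in\Remove_{k_1}(\pi)}\Size_n(\pi\cap\sm),
\]
where we used that distinct boxes in $\Lambda_{k_1}$ are disjoint. Since $\Size_n(\pi)=2^{n+1}-1\le 2^{n+1}$, it suffices to prove
\[
    \sum_{\sm\in\Remove_{k_1}(\pi)}\Size_n(\pi\cap\sm)\ \ge\ \frac{\al}{8}\,2^{n+1}.
\]

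\textbf{Step 1: each removed box contributes a lot.} Every $\sm\in\Remove_{k_1}(\pi)$ is \Major at level $k_1$. Hence $\Size_n(\pi\cap\sm)\ge 2^{n-k_1-1}$.

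\textbf{Step 2: there are many removed boxes, each counted once.} By construction we select exactly one $\sm$ for each \Bad box $\su\in\Proj_{k_1-\ell}(\pi)\cap\Lambda^{(\mu,\nu)}_{k_1-\ell}$, and $\sm\subset\su^{\boxup}$. The enlarged boxes $\su^{\boxup}$ are pairwise disjoint for distinct $\su$ in the separated lattice. So the map $\su\mapsto\sm$ is injective, and
\[
    |\Remove_{k_1}(\pi)| = \bigl|\{\su\in\Proj_{k_1-\ell}(\pi)\cap\Lambda^{(\mu,\nu)}_{k_1-\ell}:\ \su \text{ is \Bad}\}\bigr|.
\]
On $\mathcal{G}(1;\sa,\sfb)$, \eqref{e:onescale-dep} bounds this cardinality from below by $\al\cdot 2^{k_1}\cdot\dist(a,b)$, with $a=(0,0)$ and $b=(1-2^{-k_1},1-2^{-k_1})$. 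This gives $\dist(a,b)=2(1-2^{-k_1})\ge 1$ because $k_1\ge 1$. Therefore $|\Remove_{k_1}(\pi)|\ge \al 2^{k_1}$.

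\textbf{Step 3: combine.} Multiplying the two bounds gives
\[
    \sum_{\sm}\Size_n(\pi\cap\sm)\ \ge\ \al 2^{k_1}\cdot 2^{n-k_1-1} = \frac{\al}{2}\,2^{n} = \frac{\al}{4}\,2^{n+1}.
\]
This is stronger than the $\al/8$ needed, so there is room to spare.

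The main point requiring care is the bookkeeping in Step 2. One must confirm that the removed boxes are distinct and contained in $\Proj_{k_1}(\pi)$, so that their overlaps with $\pi$ are disjoint and all count toward $\Size_n(\pi)$. One must also apply the bound \eqref{e:onescale-dep} with the correct $\dist(a,b)$ normalization. The slack between $\al/4$ and $\al/8$ absorbs the $-1$ in $\Size_n(\pi)=2^{n+1}-1$ and any rounding in $\dist(a,b)$.
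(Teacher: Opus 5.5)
Your proposal is correct and follows the paper's proof essentially step for step. The paper also lower-bounds $|\Remove_{k_1}(\pi)|$ by $\al\, 2^{k_1}$ using \eqref{e:onescale-dep} and the one-to-one correspondence with \Bad boxes, uses the \Major property together with disjointness to remove at least $2^{n-k_1-1}$ microscopic boxes per removed box, and obtains the factor $1-\al/4$ before weakening it to $1-\al/8$.
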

\begin{proof}
    By the construction of $\Remove_{k_1}(\pi)$, we know that any box in $\Remove_{k_1}(\pi)$ corresponds to a unique and distinct $\Bad$ box found on the LHS of \eqref{e:onescale-dep} for $\Proj_{k_1}(\pi)$. {Hence, by \eqref{e:onescale-dep}, and using that in this case $a = (0,0), b = (1-2^{-k_1},1-2^{-k_1})$,} we have
    \begin{align*}
        |\Remove_{k_1}(\pi)| 
        \ge \al \cdot 2^{k_1} \cdot (2-2^{-k_1+1}) 
        \ge \al \cdot 2^{k_1}.
    \end{align*}
    Also, all the removed boxes $\sm\in\msf{Remove}_{k_1}(\pi)$ are \Major, so by definition they satisfy 
    \begin{align*}
        \Size_{n}( \mathsf{m} \cap \pi )\ge 2^{n-k_1-1}.
    \end{align*}
    Therefore, after deleting the boxes in $\Remove_{k_1}(\pi)$ from $\pi$, the remaining length is upper bounded by 
\begin{align*}
    \Size_{n}\left(
        \bigcup_{i=1}^{m_{1}} \Seg_{k_1}^{i}
    \right)
    &= \Size_{n}(\pi) -  \Size_{n} \left(\bigcup_{\mathsf{m}\in \Remove_{k_1}(\pi)} \mathsf{m} \cap \pi\right) \nonumber\\
    &\le 2^{n+1} - |\Remove_{k_1}(\pi)| \cdot 2^{n-k_1-1}\nonumber\\
    &\le 2^{n+1} - \al \cdot 2^{k_1}\cdot 2^{n-k_1-1}\nonumber\\
    &= \left(1 - \frac{\al}{4}\right)\cdot 2^{n+1}\nonumber\\
    &\le \left(1 - \frac{\al}{8}\right)\cdot 2^{n+1},
\end{align*}
where in the second line we used that the boxes in $\Remove_{k_1}(\pi)$ are disjoint,
and where in the last line we worsened the bound to simplify notation in the sequel. This finishes the proof.
\end{proof}
This lemma completes Step 1, and we move to Step 2.

\bigskip
\noindent\textbf{Step 2 (inductive step).} 
As shown in Step 1, the removal of deleted boxes at scale $k_1$ partitions the original path $\pi$ into several segments. The next step is to analyze how revealing the percolation environment at further scales affects the passage time of each segment. This analysis can be summarized as the following general proposition.

\begin{proposition}\label{l:onescaleloss2}
    For any $i \in\lb 1, I\rb$, where $I$ was defined in \eqref{not7684}, $(\sx, \sy) \in \sC_{k_{i-1}}$ and $\Seg \in \Pi_{n}(\bl(\sx), \tr(\sy))$, we have that on the event $\mathcal{G}$, we can remove a collection of deleted \Major boxes at level $k_i$ (recalling that $k_i = i\cdot(10\lceil \log_2 n\rceil + \ell)$), denoted by $\Remove_{k_i}(\Seg)$, from $\Proj_{k_i}(\Seg)$ such that the remaining length of $\Seg$ is at most a constant fraction of its original length, i.e.
    for $\alpha>0$ defined in Proposition~\ref{p:onescale-dep}, we have
    \begin{align}\label{one segment eq}
    \Size_{n}\left(\Seg \setminus \Remove_{k_i}(\Seg)\right)
    \le\left(1 -\frac{\al}{8}\right)\cdot \Size_{n}(\Seg).
\end{align}
Furthermore, removing the boxes in $\Remove_{k_i}(\Seg)$ creates further subsegments of $\Seg$. For any of those subsegments $\mathsf{SubSeg}$, the $\dist$-distance between the bottom-left corner of the first box in $\Proj_{k_{i+1}}(\mathsf{SubSeg})$ and the top-right corner of the last box in $\Proj_{k_{i+1}}(\mathsf{SubSeg})$ is at least $2^{-(k_i-\ell)}$.
\end{proposition}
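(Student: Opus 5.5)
The plan is to repeat Step 1 verbatim, with the whole path $\pi$ replaced by $\Seg$ and the event $\mathcal{G}(1;\cdot,\cdot)$ replaced by $\mathcal{G}(i;\cdot,\cdot)$.

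\textbf{Step (a): invoking $\mathcal{G}$.} Let $\sx',\sy'\in\Lambda_{k_i}$ be the first and last boxes of $\Proj_{k_i}(\Seg)$. By Lemma \ref{projection of path}, $\Proj_{k_i}(\Seg)$ is a directed path on $\Lambda_{k_i}$ connecting $\cen(\sx')$ and $\cen(\sy')$. The hypothesis $(\sx,\sy)\in\sC_{k_{i-1}}$ is meant to encode the separation condition $\dist(\bl(\sx'),\tr(\sy'))\ge 2^{-k_{i-1}+1}$; this is exactly what Lemma \ref{l:separation} delivers for segments, and what the present proposition's second assertion delivers inductively. I would read the statement so that this membership holds; if needed, this is the step where indexing must be adjusted. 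Hence $(\sx',\sy')\in\sC_{k_i}$, so $\mathcal{G}(i;\sx',\sy')$ holds on $\mathcal{G}$. This gives $(\mu,\nu)$ and at least $\al 2^{k_i}\dist(\cen(\sx'),\cen(\sy'))$ \Bad boxes $\su\in\Proj_{k_i-\ell}(\Seg)\cap\Lambda^{(\mu,\nu)}_{k_i-\ell}$.

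\textbf{Step (b): selecting \Major boxes.} For each such $\su$, apply Lemma \ref{claim1}, whose proof is deterministic and uses only directedness, the definition of $\Dep$, and \Bad{}ness, with $\pi$ replaced by $\Seg$ and $k_1$ by $k_i$. This produces a deleted box $\sm\in\su^{\boxup}$ at level $k_i$ with $\Size_n(\Seg\cap\sm)\ge 2^{n-k_i-1}$. Two points need care here.
\begin{itemize}
\item The pair of consecutive boxes $\sv,\sv'$ used in that lemma must both lie inside $\Seg$'s projection. At the ends of $\Seg$ this may fail, but it affects at most $O(1)$ \Bad boxes, which can simply be discarded.
\item The Major inequality $\Size_n(\sv\cap\Seg)+\Size_n(\sv'\cap\Seg)\ge 2^{n-k_i}$ holds for interior consecutive pairs by directedness.
\end{itemize}
Define $\Remove_{k_i}(\Seg)$ as the set of selected boxes. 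Since the enlarged boxes $\su^{\boxup}$ for distinct $\su\in\Lambda^{(\mu,\nu)}_{k_i-\ell}$ are disjoint, the removed boxes are disjoint. This gives
\[
\Size_n(\Seg\cap\textstyle\bigcup\Remove_{k_i})\ge (\al 2^{k_i}\dist(\cen(\sx'),\cen(\sy'))-O(1))\,2^{n-k_i-1}.
\]
By Lemma \ref{length of path}, $\Size_n(\Seg)\le 2^n\dist(\bl(\sx'),\tr(\sy'))+3\le 2^n(\dist(\cen(\sx'),\cen(\sy'))+2^{-k_i+1})+3$. Since $\dist(\cen(\sx'),\cen(\sy'))\gtrsim 2^{-k_{i-1}}\gg 2^{-k_i}$, the boundary errors are negligible, and the removed length is at least $\frac{\al}{8}\Size_n(\Seg)$ once $n$ is large. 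This gives \eqref{one segment eq}. The slack from $\al/4$ down to $\al/8$ in Lemma \ref{l:onescaleloss} is precisely what absorbs these errors.

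\textbf{Step (c): the separation claim.} Copy the proof of Lemma \ref{l:separation}. Lemma \ref{nonbacktracking} implies that each subsegment of $\Seg\setminus\Remove_{k_i}(\Seg)$ is a directed path. A subsegment either has both ends adjacent to distinct removed boxes, whose $\su^{\boxup}$ are $\dist$-separated by at least $2^{-(k_i-\ell)}$ by the well-separatedness of $\Lambda^{(\mu,\nu)}_{k_i-\ell}$, or it abuts an endpoint of $\Seg$.

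\textbf{Main obstacle.} The hard part is these end pieces: subsegments touching an endpoint of $\Seg$, and the \Bad boxes discarded near the ends. For such a piece, the separation must instead come from the fact that the nearest removed box lies at distance at least about $2^{-(k_i-\ell)}$ from the end. The first thing I would try is to discard in Step (b) all \Bad boxes within $\dist$-distance $2\cdot 2^{-(k_i-\ell)}$ of either endpoint; this costs $O(1)$ boxes, which is negligible against $\al 2^{k_i-k_{i-1}}$. Failing that, I would allow short end pieces that are simply not counted, i.e. bounded trivially, in the next iteration.
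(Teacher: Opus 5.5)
Your proposal is correct and follows the paper's route. You invoke $\mathcal{G}(i;\cdot,\cdot)$ on the first and last boxes of $\Proj_{k_i}(\Seg)$ (the paper reads the separation hypothesis exactly as you do), pick one deleted \Major box per \Bad box via Lemma~\ref{claim1}, and get separation from the well-separated sublattice as in Lemma~\ref{l:separation}; the only difference is that you compare the removed length to $\Size_n(\Seg)\approx 2^n\dist(\cen(\sx'),\cen(\sy'))$ directly, rather than through the paper's bound $\Size_{k_i}(\Proj_{k_i}(\Seg))\ge 2^{k_i-n-1}\Size_n(\Seg)$. Your first fix for the end pieces, discarding the $O(1)$ \Bad boxes within distance $2\cdot 2^{-(k_i-\ell)}$ of the endpoints of $\Seg$, is the right one and covers the two end subsegments that the paper's appeal to Lemma~\ref{l:separation} does not address, whereas the fallback of leaving short end pieces uncounted would not prove the separation claim as written.
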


\begin{remark}
The key point of this bound is that the constant $\al/8$ is independent of the scale $k_i$.
{The proof idea is  the same as that of Lemma~\ref{l:onescaleloss}}: 
1) the number of boxes removed at scale $k_2$ is at least a scale-independent constant fraction of $\Size_{k_2}(\Proj_{k_2}(\Seg))$, regardless of the geometry of $\Seg$; 
2) the boxes we remove are \Major, which means that removing the parts of $\Seg$ inside these boxes also effectively reduces the microscopic length of $\Seg$ (i.e. its $\Size_n$).
\end{remark}

\begin{proof}[Proof of Proposition \ref{l:onescaleloss2}]
The case $i = 1$ has been established in Step 1, so we focus on the case $i \in \lb 2, I\rb$ and how $\sigma_{k_i-\ell, k_i}$ affects $\Seg$.

We apply the same procedure as in Step 1 to $\Seg$, yielding a collection $\Remove_{k_i}(\Seg)$ of deleted \Major $2^{-k_i}\times 2^{-k_i}$ boxes (at level $k_i$, with respect to $\Seg$) in $\Proj_{k_i}(\Seg)$. 
Then the exactly same argument in Lemma~\ref{l:separation} shows that for any subsegment $\mathsf{SubSeg}$ of $\Seg$ created by removing the boxes in $\Remove_{k_i}(\Seg)$, the $\dist$-distance between the bottom-left corner of the first box in $\Proj_{k_{i+1}}(\mathsf{SubSeg})$ and the top-right corner of the last box in $\Proj_{k_{i+1}}(\mathsf{SubSeg})$ is at least $2^{-(k_i-\ell)}$. Therefore, it remains to verify the upper bound \eqref{one segment eq}.

Let $\msf{s}_1$ and $\msf{s}_2$ be the first and last box in $\Proj_{k_i}(\Seg)$ respectively.
By our choice of $\Seg$, 
\[
    \dist(\bl(\msf{s}_1),\tr(\msf{s}_2)) \ge 2^{-k_{i-1} + 1}.
\]
Hence, we know that on the event $\mathcal{G}$, the estimate \eqref{one event} holds for $\Proj_{k_i}(\Seg)$.
Thus, applying \eqref{one event} to $\Proj_{k_i}(\Seg)$ yields
\begin{align}\label{k_2 eq1}
    |\Remove_{k_i}(\Seg)| 
    \overset{\eqref{one event}}&{\ge} \al\cdot \dist(\cen(\mathsf{s}_1), \cen(\mathsf{s}_2)) \cdot 2^{k_i}\nonumber\\
    &= \al \cdot \left(\Size_{k_i}(\Proj_{k_i}(\Seg)) - 1\right)\nonumber\\
    &\ge \frac{\al}{2}\cdot \Size_{k_i}(\Proj_{k_i}(\Seg)),
\end{align}
where the equality is by Lemma \ref{length of path}. For any \Major box $\sm \in \Remove_{k_i}(\Seg)$, by definition  we have 
\[
    \Size_n(\sm \cap \Seg) \ge 2^{n-k_i-1}.
\]
Then arguing as in the proof of Lemma~\ref{l:onescaleloss}, since the removed boxes are disjoint and \Major, we obtain that 
\begin{align}\label{k_2 eq2}
    \Size_{n}\left(\Seg \setminus \Remove_{k_i}(\Seg)\right) 
    &=\Size_{n}(\Seg) - \Size_{n} \left(\bigcup_{\mathsf{m}\in \Remove_{k_i}(\Seg)} \mathsf{m} \cap \Seg\right) \nonumber\\
    &\le \Size_{n}(\Seg) - |\Remove_{k_i}(\Seg)| \cdot 2^{n-k_i-1}\nonumber\\
    \overset{\eqref{k_2 eq1}}&{\le}
    \Size_{n}(\Seg) - \frac{\al}{2} \cdot \Size_{k_i}(\Proj_{k_i}(\Seg))\cdot 2^{n-k_i-1}.
\end{align}
Towards upper bounding \eqref{k_2 eq2}, we need to lower bound $\Size_{k_i}(\Proj_{k_i}(\Seg))$. 
For any $2^{-k_i} \times 2^{-k_i}$ box $\sfb \in \Proj_{k_i}(\Seg)$,
the intersection $\Seg \cap \sfb$ is a directed path crossing $\sfb$ (in $\Lambda_n$),
so the $\Size_n$ of this intersection is at most that of a directed path in $\Lambda_n$ going from the bottom-left corner to the top-right corner of $\sfb$, which is $2^{n-k_i+1}-1$.
That is, for any $\sfb \in \Size_{k_i}(\Proj_{k_i}(\Seg))$, we have
\begin{align}\label{e: intersection size}
    \Size_n(\Seg\cap\mathsf{b})\leq 2^{n-k_i+1} - 1 < 2^{n-k_i+1}.
\end{align}
This implies the following lower bound for $\Size_{k_i}(\Proj_{k_i}(\Seg))$ in terms of $\Size_n(\Seg)$:
\begin{align}\label{k_2 eq3}
    \Size_{k_i}(\Proj_{k_i}(\Seg)) \ge \Size_n(\Seg) \cdot 2^{-n+k_i-1}.
\end{align}
Indeed, by \eqref{e: intersection size} we have
\begin{align*}
    \Size_n(\Seg)
    &=\sum_{\mathsf{b}\in\Proj_{k_i}(\Seg)}\Size_n(\Seg\cap\mathsf{b})\\
    &\le \Size_{k_i}(\Proj_{k_i}(\Seg))\cdot \max_{\mathsf{b}\in\Proj_{k_i}(\Seg)}\Size_n(\Seg\cap\mathsf{b})\\
    \overset{\eqref{e: intersection size}}&{\le} \Size_{k_i}(\Proj_{k_i}(\Seg))\cdot 2^{n-k_i+1}.
\end{align*}
Now plugging \eqref{k_2 eq3} into \eqref{k_2 eq2}, we get the desired upper bound for the remaining length of $\Seg$:
\begin{align}\label{one segment}
    \Size_{n}\left(\Seg \setminus \Remove_{k_2}(\Seg)\right)
    &\le
    \Size_n(\Seg) - \frac{\al}{8}\cdot \Size_n(\Seg)\nonumber\\
    &=\left(1 -\frac{\al}{8}\right)\cdot \Size_{n}(\Seg).
\end{align}
\end{proof}

With this proposition in hand, we can now see how to iterate the procedure. We first apply Proposition \ref{l:onescaleloss2} to each segment $\Seg_{k_1}^{1},\dots,\Seg_{k_1}^{m_1}$ created in Step 1, thereby obtaining collections $\Remove_{k_2}(\Seg_{k_1}^{1}), \dots, \Remove_{k_2}(\Seg_{k_1}^{m_1})$ of deleted \Major boxes. By \eqref{one segment eq}, for each $1\leq i \leq m_1$, we have
\begin{align}\label{one segment eq2}
    \Size_{n}\left(\Seg_{k_1}^{i} \setminus \Remove_{k_2}(\Seg_{k_1}^{i})\right)
    \le \left(1 -\frac{\al}{8}\right)\cdot \Size_{n}(\Seg_{k_1}^{i}).
\end{align}
Moreover, the remaining parts of $\{\Seg_{k_1}^{i}\}_{i=1}^{m_1}$ decompose into several directed paths, each of which has starting and ending boxes of $\dist$-distance at least $2^{-k_2 + \ell}$, yielding a new family of path segments $\Seg_{k_2}^{1},\dots, \Seg_{k_2}^{m_{2}} \subset \bigcup_{i=1}^{m_1}\Seg_{k_1}^i$. We can further upper bound the total length of these new segments using \eqref{e:remaining-length} and \eqref{one segment eq2}:
\begin{align*}
    \Size_{n} \left(
            \bigcup_{j=1}^{m_{2}}
            \Seg_{k_2}^{j}
        \right) 
    &= \sum_{i=1}^{m_{1}}\Size_{n}\left(\Seg_{k_1}^{i}\setminus \Remove_{k_2}(\Seg_{k_1}^{i})\right) \nonumber\\
    \overset{\eqref{one segment eq2}}&{\le} \left(1-\frac{\al}{8}\right)\sum_{i=1}^{m_{1}}\Size_{n}\left(\Seg_{k_1}^{i}\right) \nonumber\\
    &=  \left(1-\frac{\al}{8}\right)\cdot \Size_{n} \left( \bigcup_{i=1}^{m_{1}}\Seg_{k_1}^{i}\right).
\end{align*}
We then inductively perform the procedure of Step 2 at scales $k_3,k_4,\dots$.
As we saw above, having performed this process up to scale $k_{i-1}$, the key property that allows us to repeat at scale $k_i$ is:
\begin{center}
    \emph{For each segment at scale $k_i$, the bottom-left point of its starting box and the top-right point of its ending box are of $\dist$-distance at least $2^{-k_{i-1} + \ell}$,
    which lets us apply \eqref{one event} for each segment to get a constant fraction of deleted \Major boxes at this scale.}
\end{center}
Since by induction this condition is guaranteed by Proposition \ref{l:onescaleloss2}, for every $i$, the same reasoning as at scale  $k_2$ allows to produce new segments at the next scale $k_{i+1}$, each of which also satisfies the above condition at scale $k_{i+1}$, and their total length is upper bounded by $1-\frac{\al}{8}$ times that of the segments at scale $k_i$:
\[
 \Size_{n} \left(
            \bigcup_{j=1}^{m_{i-1}}
            \Seg_{k_{i-1}}^{j}
        \right)\le\left(1-\frac{\al}{8}\right)\cdot \Size_{n} \left( \bigcup_{j=1}^{m_{i}}\Seg_{k_i}^{j}\right).
\]

Since between each iteration we skip $k_{i}-k_{i-1} = 10\ceil{\log_2 n} + \ell$ many scales,
we can repeat this procedure $I$ times, where we recall $I=\left\lfloor\frac{n}{10\ceil{\log_2 n} + \ell}\right\rfloor$.
After the $I\th$ round, we have segments $\Seg_{k_I}^{1},\dots,\Seg_{k_I}^{m_I} \subset \pi$ whose total length is $o(2^n)$:
\begin{align*}
    \Size_n\left(\bigcup_{i=1}^{m_I} \Seg^i_{k_I}\right)
    &\le \left(1-\frac{\al}{8}\right)^{I-1} \cdot \Size_n\left( \bigcup_{j=1}^{m_{1}}\Seg_{k_1}^{j}\right)\\
    \overset{\eqref{e:remaining-length}}&{\le} \left(1-\frac{\al}{8}\right)^I \cdot 2^{n+1}\\
    &\le 2^{n}\cdot e^{-cn/\log n}.
\end{align*}
Moreover, all other microscopic boxes along $\pi$ are deleted:
\begin{align*}
    \sigma_{n}(\su) = 0\quad \text{for all}\quad \mathsf{u} \in 
    \left(\pi \setminus \bigcup_{i=1}^{m_{I}}\Seg_{k_{I}}^{i}\right)\cap \Lambda_{n},
\end{align*} 
because they are contained in the removed \Major boxes at some scale $k_i$ for some $1\leq i \leq I$.
Finally, recalling the definition of passage time (Definition \ref{d:lpp}), we deduce that
\begin{align*}
        N_{\sigma_n}(\pi) 
        \le \Size_{n}\left(
            \bigcup_{i=1}^{m_{I}}
            \Seg_{k_{I}}^{i}
        \right)
        &\le 2^{n} \cdot e^{-cn/\log n}.
\end{align*}
Maximizing over all directed paths $\pi$ completes the proof of Theorem \ref{t:sublinear}.
\end{proof}

\section{Unrectifiable directed fractal percolation}\label{s:8x10}

In this section we prove \cref{t:8x10}.  As indicated already, the proof strategy is to consider appropriate ``lifts'' of paths  in $d$ dimensions that are rather tortuous. The construction of such fractal undirected paths is strongly inspired by the arguments in \cite{DGZPercolationThickPoints2026}. Accordingly, we begin by recording some pertinent definitions and results from the latter. We will then, in Section \ref{s:lifting}, present our construction.

\subsection{Constructing paths in $\R^d$}
Recall that $\ee_i$ is the $i\th$ standard basis vector.
A subset $T\subset\Z^d$ is called a \emph{tube} if there exists $x\in 8\Z^d$ and $y=x+8\eta\ee_i$, for some  $\eta\in\{-1,+1\}$ and some $i\in\lb 1,d \rb$,
such that
$T=\{z\in\Z^d : \dist(z,\sL) \le 2\}$,
where $\sL$ is the line segment between $x$ and $y$.
We write $T=T_{x,y}$ to emphasize the dependence on $x,y$.

The next lemma is a slight reformulation of \cite[Lemma 2.4]{DGZPercolationThickPoints2026}.
We write
\begin{align}\label{e:q}
    \qq:=\left\lfloor\frac{d}{10}\right\rfloor,
\end{align}
and we assume from now on that $d\ge 1000$ so that $\qq \ge 100$.
\begin{lemma}[{\cite[Lemma 2.4]{DGZPercolationThickPoints2026}}]\label{l:tube}
    Fix $x\in\Z^d$ and let $y:=x + 8\eta \ee_i$ for some $i\in\lb 1,d\rb$ and some $\eta\in\{-1,1\}$.
    Also let $z:=x+\ee_j$ for some $j\in\lb 1,d\rb$ (note that $j=i$ is allowed).
    Then there exists a set $\cT^z_{x,y}$ 
    consisting of $\qq$-many
    paths with the following properties:
    \begin{enumerate}[label=(\alph*)]
        \item \label{p1} Each path in $\cT^z_{x,y}$ is a self-avoiding path of length $11$ contained in $T_{x,y}$,
        where length means the number of vertices on the path and containment is implied in terms of sets of vertices.
        \item \label{p2} Each path $\Path\in\cT^z_{x,y}$ starts at $\Path(0)=z$ and ends at distance $1$ from $y$, i.e. $\dist(\Path(10), y)=1$.
        \item \label{p3} Different paths in $\cT_{x,y}^z$ intersect only at their starting point $z$.
        \item \label{p4} 
        Write $B_x:=x+[0, 8)^d$ and $B_y:=y+[0, 8)^d$.
        Every path $\Path\in\cT^z_{x,y}$ starts in $B_x$, ends in $B_y$, and is contained in $B_x\cup B_y$.
    \end{enumerate}

    Moreover, if we do not fix the starting point $z$, there exists a set $\cT_{x,y}$ of $\qq$-many disjoint paths, each of which starts at distance $1$ from $x$, ends at distance $1$ from $y$, and satisfies properties 
    \ref{p1} and \ref{p4}.
\end{lemma}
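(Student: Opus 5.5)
The statement is purely combinatorial, so the plan is to write the paths down explicitly. Each path takes one "detour" step into a private coordinate direction, runs parallel to $\ee_i$, and then corrects its endpoint. Fix $x$, $i$, $\eta$, $y=x+8\eta\ee_i$ and $z=x+\ee_j$. Since $\qq\le d/10$ and $d\ge 1000$, we can choose $\qq$ distinct indices $k_1,\dots,k_\qq\in\lb 1,d\rb\setminus\{i,j\}$. The path $\Path_k$ will be the one whose intermediate vertices all carry a $+\ee_k$ component. This component makes different paths meet only at $z$, which gives property \ref{p3}.

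\textbf{Case $j\ne i$.} Define $\Path_k$ by the step sequence: one step $+\ee_k$, then $8$ steps $+\eta\ee_i$, then one step $-\ee_j$. Its vertices are
\[
x+\ee_j+\ee_k+t\eta\ee_i \quad (t\in\lb 0,8\rb), \qquad \text{followed by } y+\ee_k,
\]
preceded by $z$ itself.
\begin{itemize}[label=$\circ$]
\item Length and endpoint: there are $10$ steps, hence $11$ vertices, and the path ends at $y+\ee_k$, which is at distance $1$ from $y$. This gives \ref{p1} and \ref{p2}.
\item Tube: every vertex is within $\ell^1$-distance $2$ of $\sL$, since it deviates from $\sL$ only by $\ee_j$ and/or $\ee_k$.
\item Self-avoidance: the $i$-coordinate is monotone and the last step changes a different coordinate.
\end{itemize}

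\textbf{Case $j=i$.} Here $z=x+\ee_i$. If $\eta=+1$, only $7$ steps in direction $\ee_i$ are needed, so we pad the path: two steps $+\ee_k$, seven steps $+\ee_i$, one step $-\ee_k$. This ends at $y+\ee_k$, and its intermediate vertices deviate from $\sL$ by at most $2\ee_k$, so they stay in the tube. If $\eta=-1$, we need $9$ steps in direction $-\ee_i$: one step $+\ee_k$, then nine steps $-\ee_i$, again ending at $y+\ee_k$. In both subcases every intermediate vertex has positive $k$-coordinate offset, so \ref{p3} still holds.

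\textbf{Property \ref{p4}.} Offsets $\ee_j,\ee_k,2\ee_k$ lie in $[0,8)$ in their coordinates. In coordinate $i$ the path ranges over $[x_i,x_i+8]\cup\ldots$ when $\eta=+1$ and over $[x_i-8,x_i+1]$ when $\eta=-1$, and $B_x\cup B_y$ covers $[x_i,x_i+16)$, respectively $[x_i-8,x_i+8)$, in that coordinate. Paths start in $B_x$ because $z\in B_x$. They end in $B_y$ because $y+\ee_k\in B_y$.

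\textbf{The "moreover" part.} Choose $2\qq\le d-1$ distinct indices forming pairs $(k,k')$, all different from $i$. Let $\Path_k$ start at $x+\ee_k$, take one step $+\ee_{k'}$, then $8$ steps $\eta\ee_i$, then one step $-\ee_{k'}$. It ends at $y+\ee_k$, which is at distance $1$ from $y$. The tube and box checks are the same as above. Every vertex carries the private coordinate $\ee_k$, so the paths are pairwise disjoint.

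The only real obstacle is bookkeeping. One must get exactly $11$ vertices in every case, which is what forces the padding when $j=i$, $\eta=1$. One must also keep every vertex within $\ell^1$-distance $2$ of $\sL$, which is why each path uses at most two unit off-axis offsets at any time.
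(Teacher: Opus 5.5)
Your proposal is correct and follows essentially the same route as the paper. Both give an explicit construction in which each path detours into a private coordinate direction, runs parallel to the tube axis, and then corrects its endpoint; that private coordinate is what yields property (c) and the disjointness in the ``moreover'' part. The differences are cosmetic. The paper first reduces to $x=\0$, $y=\pm 8\ee_1$, $z\in\{\ee_1,\ee_2\}$ by symmetry. In the case $j=i$, $\eta=+1$ it pads with two private directions $\ee_{2\ell},\ee_{2\ell+1}$ rather than your double step $2\ee_k$; both versions have $11$ vertices and stay within $\ell^1$-distance $2$ of $\sL$.
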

\begin{proof}
    We will give an explicit construction.
    We only describe the construction in the case where $x=\0$, where $y=\pm 8\ee_1$, and where $z\in\{\ee_1,\ee_2\}$.
    From this special case, we can deduce the result for arbitrary $x,y,z$ by translating and using the symmetry of the coordinates.

    We split the construction of $\cT_{x,y}^z$ into the following four cases.

    \noindent
    \textbf{Case 1 ($y=8 \ee_1, z=\ee_1$):}
    Define $\cT^z_{x,y} :=  \{\Path^\ell : \ell\in\lb 1,\qq\rb\}$, where $\Path^\ell:\lb0,10\rb \to T_{x,y}$ is the path
    \begin{align}\label{e:case1}
        \Path^\ell(t) :=
        \begin{cases}
            z&\quad t=0\\
            tz+\ee_{2\ell}&\quad t\in\lb 1, 8\rb\\
            8z + \ee_{2\ell} + \ee_{2\ell+1}&\quad t=9\\
            8z + \ee_{2\ell+1}&\quad t=10
        \end{cases}.
    \end{align}
    In words, $\Path^\ell$ starts at $z$, takes one $\ee_{2\ell}$-step, seven $\ee_1$-steps, one $\ee_{2\ell+1}$-step, and one $-\ee_{2\ell}$-step to arrive at $8z+\ee_{2\ell+1} = y + \ee_{2\ell+1}$.
    See Figure \ref{411-1}.

    \bigskip
    \noindent
    \textbf{Case 2 ($y=8 \ee_1, z=\ee_2$):}
    Define $\cT^z_{x,y} :=  \{\Path^\ell : \ell\in\lb 3,\qq+2\rb\}$, where $\Path^\ell:\lb0,10\rb \to T_{x,y}$ is the path
    \begin{align}\label{e:case2}
        \Path^\ell(t) :=
        \begin{cases}
            z&\quad t=0\\
            z+ (t-1)\ee_1+\ee_\ell&\quad t\in\lb 1, 9\rb\\
            8\ee_1 + \ee_\ell&\quad t=10
        \end{cases}.
    \end{align}
    In words, $\Path^\ell$ starts at $z$, takes one $\ee_{\ell}$-step, eight $\ee_1$-steps, and one $-z$-step to arrive at $y+\ee_\ell$.
    See Figure \ref{411-2}.

    \bigskip

    \noindent
    \textbf{Case 3 ($y=-8 \ee_1, z=\ee_1$):}
    Define $\cT^{z}_{x,y} := \{\Path^\ell : \ell\in\lb 2,\qq+1\rb\}$, where $\Path^\ell:\lb0,10\rb\to T_{x,y}$ is the path
    \begin{align}\label{e:case3}
        \Path^\ell(t) :=
        \begin{cases}
            z&\quad t=0\\
            z + \ee_\ell&\quad t=1\\
            -(t-2)z+\ee_\ell&\quad t\in\lb 2, 10\rb
        \end{cases}.
    \end{align}
    In words, $\Path^\ell$ starts at $z$ and takes one $\ee_{\ell}$-step and nine $-\ee_1$-steps to arrive at $y+\ee_\ell$.
    See Figure \ref{411-3}.

    \bigskip

    \noindent
    \textbf{Case 4 ($y=-8 \ee_1, z=\ee_2$):}
    Define $\cT^{z}_{x,y} := \{\Path^\ell : \ell\in\lb 3,\qq+2\rb\}$, where $\Path^\ell:\lb0,10\rb\to T_{x,y}$ is the path
    \begin{align}\label{e:case4}
        \Path^\ell(t) :=
        \begin{cases}
            z&\quad t=0\\
            z + \ee_\ell&\quad t=1\\
            z - (t-1)\ee_1 +\ee_\ell&\quad t\in\lb 2, 9\rb\\
            - 8\ee_1 + \ee_\ell&\quad t=10
        \end{cases}.
    \end{align}
    In words, $\Path^\ell$ starts at $z$, takes one $\ee_\ell$ step, eight $-\ee_1$-steps, and one $-z$-step to arrive at $y+\ee_\ell$.
    See Figure \ref{411-4}.

    \bigskip

    Finally, we define $\cT_{x,y}$, again assuming that $x=\0$ and $y=\pm 8\ee_1$, and considering separately the following two cases.

    \noindent
    \textbf{Case 1 ($y=8 \ee_1$):}
    Define $\cT_{x,y} := \{\Path^\ell : \ell\in\lb 1,\qq\rb\}$, where $\Path^\ell:\lb0,10\rb\to T_{x,y}$ is the path
    \begin{align}\label{e:Txycase1}
        \Path^\ell(t) :=
        \begin{cases}
            \ee_{2\ell}&\quad t=0\\
            \ee_{2\ell} + t\ee_1&\quad t\in\lb 1,8\rb\\
            \ee_{2\ell} + 8\ee_1 + \ee_{2\ell+1}&\quad t=9\\
            8\ee_1 + \ee_{2\ell+1}&\quad t=10
        \end{cases}.
    \end{align}
    In words, $\Path^\ell$ starts at $\ee_{2\ell}$, takes eight $\ee_1$-steps, one $\ee_{2\ell+1}$-step, and one $-\ee_{2\ell}$-step to arrive at $y+\ee_{2\ell+1}$.
    See Figure \ref{411-5}.

    \bigskip
    \noindent\textbf{Case 2 ($y = -8 \ee_1$):}
    Define $\cT_{x,y} := \{\Path^\ell : \ell\in\lb 1,\qq\rb\}$, where $\Path^\ell:\lb0,10\rb\to T_{x,y}$ is the path
    \begin{align}\label{e:Txycase2}
        \Path^\ell(t) :=
        \begin{cases}
            \ee_{2\ell}&\quad t=0\\
            \ee_{2\ell} - t\ee_1&\quad t\in\lb 1,8\rb\\
            \ee_{2\ell} - 8\ee_1 + \ee_{2\ell+1}&\quad t=9\\
            -8\ee_1 + \ee_{2\ell+1}&\quad t=10
        \end{cases}.
    \end{align}
    In words, $\Path^\ell$ starts at $\ee_{2\ell}$, takes eight $-\ee_1$-steps, one $\ee_{2\ell+1}$-step, and one $-\ee_{2\ell}$-step to arrive at $y+\ee_{2\ell+1}$.
    See Figure \ref{411-6}.

    \bigskip

It is straightforward to verify that all the above sets satisfy the conclusions of Lemma \ref{l:tube}.
Indeed, properties \ref{p1}, \ref{p2}, \ref{p4} are straightforward to check by direct inspection of \eqref{e:case1}--\eqref{e:Txycase2}.
For Property \ref{p3} in \eqref{e:case1}--\eqref{e:case4}, it suffices to note that in each of the four cases, for $\ell\ne\ell'$, after departing from $z$ the paths $\Path^\ell$ and $\Path^{\ell'}$ spend the rest of their journeys inside disjoint hyperplanes.
A similar argument implies the counterpart disjointness in \eqref{e:Txycase1}--\eqref{e:Txycase2}.
\end{proof}

\begin{figure}[tbhp]
    \centering

    \begin{subfigure}[t]{0.4\textwidth}
        \centering
        \includegraphics[width=\textwidth]{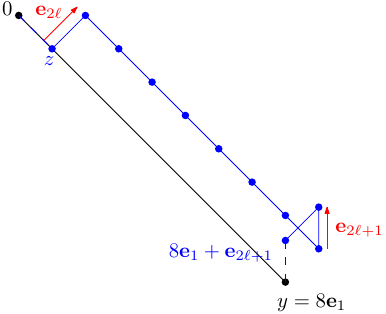}
        \caption{Case 1: $y=8\ee_1$, $z=\ee_1$.}
        \label{411-1}
    \end{subfigure}
    \hfill
    \begin{subfigure}[t]{0.4\textwidth}
        \centering
        \includegraphics[width=\textwidth]{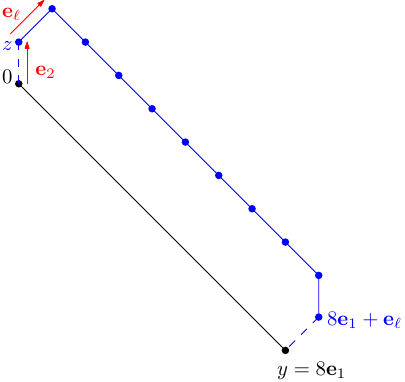}
        \caption{Case 2: $y=8\ee_1$, $z=\ee_2$.}
        \label{411-2}
    \end{subfigure}
    \vskip\baselineskip
    \begin{subfigure}[t]{0.4\textwidth}
        \centering
        \includegraphics[width=\textwidth]{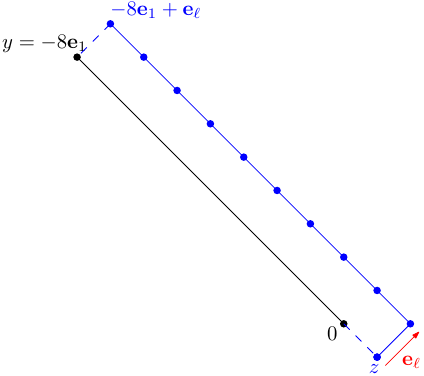}
        \caption{Case 3: $y=-8\ee_1$, $z=\ee_1$.}
        \label{411-3}
    \end{subfigure}
    \hfill
    \begin{subfigure}[t]{0.4\textwidth}
        \centering
        \includegraphics[width=\textwidth]{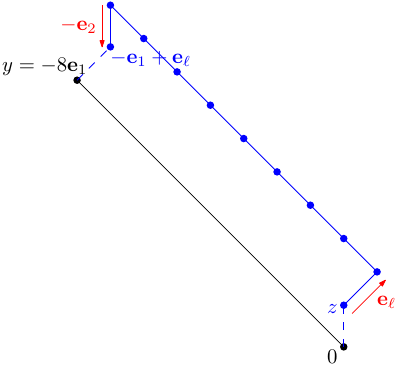}
        \caption{Case 4: $y=-8\ee_1$, $z=\ee_2$.}
        \label{411-4}
    \end{subfigure}
    \vskip\baselineskip
    \begin{subfigure}[t]{0.4\textwidth}
        \centering
        \includegraphics[width=\textwidth]{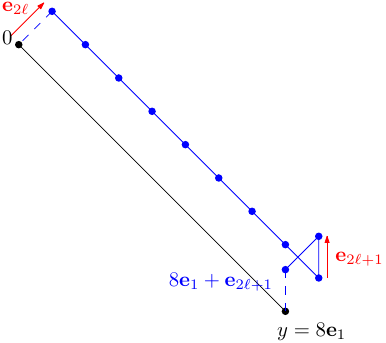}
        \caption{Case 5: $y=8\ee_1$}
        \label{411-5}
    \end{subfigure}
    \hfill
    \begin{subfigure}[t]{0.4\textwidth}
        \centering
        \includegraphics[width=\textwidth]{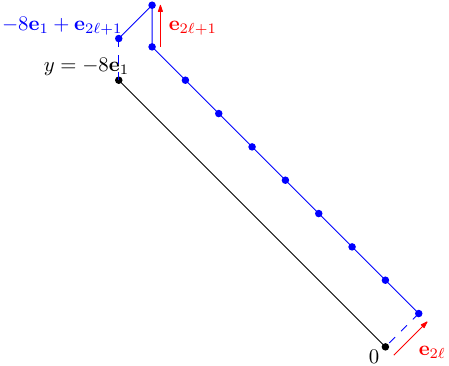}
        \caption{Case 6: $y=-8\ee_1$}
        \label{411-6}
    \end{subfigure}
    \caption{The construction underlying Lemma \ref{l:tube}. 
    Each panel depicts a path $\Path^\ell$ (blue) corresponding to one of the cases \eqref{e:case1}--\eqref{e:Txycase2}.}
    \label{f:tube}
\end{figure}

\begin{remark}[Scaled tubes]
    The notion of a tube and \cref{l:tube} have natural analogues in any scaled lattice $8^{-(k+1)}\Z^d$, for $k\ge 0$.
    Namely, for adjacent vertices $x,y\in 8^{-k}\Z^d$,
    the tube $T_{x,y}$ is given by $\{z\in 8^{-(k+1)}\Z^d : \dist(z,\sL) \le 2\cdot 8^{-(k+1)}\}$, and the natural analogue of \cref{l:tube} in $8^{-(k+1)}\Z^d$ 
    has an identical proof after scaling everything by $8^{-(k+1)}$.
\end{remark}

Our aim now is to use \cref{l:tube} to construct the paths whose lifted versions will underlie the proof of \cref{t:8x10}.
For this we need another definition:
\begin{definition}[Refinements of an edge, {\cite[Definition 2.5]{DGZPercolationThickPoints2026}}]\label{def:refinements}
    For an integer $k\ge 0$ consider vertices $x,y$ which are adjacent in $8^{-k}\Z^d$, and the tube $T_{x,y}$ in $8^{-(k+1)}\Z^d$.
    The \emph{refinements} of the  edge $(x,y)$ are those paths in $\cT_{x,y} \cup \bigcup_{z}\cT_{x,y}^z$, where the union is over all $z$ of the form $z = x+8^{-(k+1)}\ee_i$ for $i\in\lb 1,d\rb$ (as in the statement of \cref{l:tube}).

    To help parse this, observe that the refinements are connected paths on $8^{-(k+1)}\Z^d$, whose endpoints are adjacent to $x$ and $y$ (here adjacency is with respect to the graph structure of $8^{-(k+1)}\Z^d$).
    Further, the endpoints lie in $x + [0,8^{-k})^d$ and $y+[0,8^{-k})^d$ respectively.
\end{definition}

We now construct the set of paths which will be lifted in the upcoming section.
Recall we are assuming $d$ is large enough that $\qq:=\floor{d/10}\ge 100$.
We fix an integer $M\ge 10$ and write
\begin{align}\label{e:length}
    \cL_0 := M,\qquad\qquad
    \cL_{k+1} := 10(\cL_k-1) + 1
    \quad\text{for } k\ge 0.
\end{align}
For each $k\ge 0$, we now inductively define a set $\Paths{k}$ of self-avoiding lattice paths on $8^{-k}\Z^d$ of length $\cL_k$.

\begin{definition}[The set of paths $\Paths{k}$ {\cite[Definition 2.8]{DGZPercolationThickPoints2026}}]\label{d:paths}

    Let $\Paths{0}$ be the set of coordinate-wise increasing paths of length $M$ in $\Z^d$ starting from the origin $\0$.
    That is, the paths in $\Paths{0}$ are all those of the form $\Path = (\0,\; \ee_{i_1},\; \ee_{i_1}+\ee_{i_2},\;\dots,\;\ee_{i_1}+\ee_{i_2}+\cdots+\ee_{i_{M-1}})$ for $i_1,\dots,i_{M-1}\in\lb 1,d\rb$.

    We assume now that $\Paths{k}$ has been defined for some integer $k \ge 0$ and consists of paths on $8^{-k}\Z^d$, and define $\Paths{k+1}$ inductively.
    To each path $\Path = (x_0,\dots,x_{\cL_k-1}) \in \Paths{k}$ on $8^{-k}\Z^d$, we associate a family of paths on $8^{-(k+1)}\Z^d$ given by sequentially refining the edges $(x_0,x_1),(x_1,x_2),\dots$ as follows.
    \begin{enumerate}[label=(\arabic*)]
        \item\label{refine1} We deterministically choose a subset of cardinality $\qq-10$ from $\cT_{x_0,x_1}$, and then choose a refinement of the edge $(x_0,x_1)$ from this subset.
        By \cref{l:tube}\ref{p4}, any such refinement ends at $x_1 + 8^{-(k+1)}\ee_j$ for $j\in\lb 1,d\rb$.

        \item\label{refine2} Suppose we have specified refinements of the edges $(x_0,x_1),(x_1,x_2),\ldots, (x_{i-1}, x_i)$ for some $i<\cL_k-1$.
        Let $z$ be the endpoint of the refinement of $(x_{i-1}, x_i)$.
        We claim that at most $10$ paths in $\cT_{x_i, x_{i+1}}^z$ intersect the refinement of $(x_{i-1}, x_i)$ at a point other than $z$.
        To see this, note that the refinement of $(x_{i-1}, x_i)$ contains $11$ vertices (one of which is $z$), so there are only $10$ vertices in $8^{-(k+1)}\Z^d$ besides $z$ at which a path in $\cT_{x_i,x_{i+1}}^z$ can intersect the refinement of $(x_{i-1},x_i)$.
        Since different paths in $\cT_{x_i,x_{i+1}}^z$ are disjoint except at their starting point $z$, the claim follows.
        Therefore, we can deterministically choose a subset of $\cT_{x_i, x_{i+1}}^z$ with cardinality $\qq - 10$ whose constituent paths do not intersect the refinement of $(x_{i-1}, x_i)$ except at $z$,
        and then choose a refinement of $(x_i,x_{i+1})$ from this subset.
        Here we used that, by Lemma \ref{l:tube}\ref{p4}, we have $z = x_i + 8^{-(k+1)}\ee_j$ for some $j\in\lb 1,d\rb$, which permits us to apply Lemma \ref{l:tube} with $x=x_i$ and $y = x_{i+1}$.

        \item\label{refine3} We concatenate the chosen refinements of $(x_i, x_{i+1})$ for $i\in\lb 0,\cL_k-2\rb$, yielding a self-avoiding path on $8^{-(k+1)}\Z^d$ of length $\cL_{k+1}$ (see \eqref{e:length}).
        Below we indicate why the path is indeed self-avoiding.
        First, by the previous step of the construction, concatenating the refinements of any two consecutive edges yields a self-avoiding path.
        So fix any two non-consecutive edges $e=(x_i,x_{i+1})$ and $e'=(x_j,x_{j+1})$ on $\Path$.
        By the induction hypothesis $\Path$ is self-avoiding, so $\{x_i,x_{i+1}\}\cap\{x_j,x_{j+1}\}=\varnothing$ and hence $e\cap e'=\varnothing$.
        It follows by lattice geometry that $\dist(e,e')\ge 8^{-k}$, where $\dist$ is the $\ell^1$-distance on $\R^d$.
        On the other hand, the construction in the proof of Lemma \ref{l:tube} implies that any refinement of $e$ lies within $\ell^1$-distance $2\cdot 8^{-(k+1)}$ of $e$, and similarly for $e'$.
        It follows that the $\ell^1$-distance between any two such refinements is at least $8^{-k} - 4\cdot 8^{-(k+1)} > 0$, implying self-avoidance.
\end{enumerate}
Repeating the above construction for every possible sequence of refinements yields a collection of $(\qq-10)^{\cL_k-1}$ many self-avoiding paths on $8^{-(k+1)}\Z^d$ of length $\cL_{k+1}$; we call these paths the \emph{refinements} of $\Path$.

Finally, we form the set $\Paths{k+1}$ by collecting all the refinements of all paths in $\Paths{k}$.
By induction, this defines $\Paths{k}$ for all $k \ge 0$.
\end{definition}

\begin{remark}[Skeletons of paths]\label{r:refinements}
    For each $\Path\in\Paths{n}$, there is a unique sequence of paths
    \[
        \Path_0,\Path_1,\dots,\Path_n=\Path
    \]
    such that $\Path_k\in\Paths{k}$ for all $k\in\lb 0,n\rb$, and such that $\Path_k$ is a refinement of $\Path_{k-1}$ for all $k\in\lb1,n\rb$. 
    {This follows from a similar argument as in Definition \ref{d:paths}\ref{refine3}: any refinement of $\Path_{k-1}\in\cP^{k-1,M}$ must lie within distance $2\cdot 8^{-k}$ of $\Path_{k-1}$, whereas for any two distinct edges $e,e'$ in $8^{-(k-1)}\Z^d$, there must exist  points $x\in e$ and $x'\in e'$ separated by distance at least $8^{-(k-1)}$.}

    We refer to the path $\Path_k$ as the \emph{$k$-skeleton} of $\Path$.
    Also, for any $\ell>k$ and any $\Path_k\in\Paths{k}$, we refer to the paths in $\Paths{\ell}$ obtained by repeatedly refining $\Path_k$ as \emph{refinements of $\Path_k$ at scale $\ell$}.
    We will also use this terminology for iterated refinements of a single edge $(x,y)$ in $8^{-k}\Z^d$ as in Definition \ref{def:refinements}.
    It may be helpful to note that we have an equipartition of $\cP^{\ell,M}$ indexed by the paths in $\cP^{k,M}$: the partition element corresponding to $P_k\in \cP_{k,M}$ is just the set of all paths in $\cP^{\ell,M}$ obtained from iteratively refining $P_k$.
    In other words, viewing the path construction as a tree, the refinements of $P_k$ at scale $\ell>k$ are exactly the vertices at depth $\ell$ lying on the subtree below $P_k$.
\end{remark}

\begin{remark}[Path endpoints]\label{r:27-displacement}
    Let $\bS^{d-1}$ be the unit $\ell^1$-sphere in $\R^d$.
    Recall that the paths $\cP^{0,M}$ all start at the origin and end in $(M-1)\bS^{d-1}$.
    However, this is not true for paths in $\cP^{n,M}$:
    each time a path is refined, its endpoints are slightly displaced.
    These displacements decay geometrically, and hence the total displacement of the endpoints is $O(1)$.
    In fact, the $\ell^1$-displacement of each endpoint is upper bounded by $\frac{2}{7} = 2 \sum_{k=1}^\infty 8^{-k}$.
    In particular, for any $n\ge 1$, every path $\Path\in\cP^{n,M}$ 
    satisfies
    \begin{align}\label{e:displacement-27}
        \norm{\Path(0)}_1 \le \frac{2}{7}
        \qquad\text{and}\qquad
        \norm{\Path(M-1)}_1 \ge M-1-\frac{2}{7}.
    \end{align}
    This can be proved by applying \cite[Lemma 2.12, Claim 1]{DGZPercolationThickPoints2026} to the first and last edge of $P_0$.
\end{remark}

\subsection{Lifting paths to $\R^{d+1}$}\label{s:lifting}

We next discuss \emph{lifting} the paths in $\Paths{n}$ to $\R^{d+1}$.
This will provide a family of paths in $\R^{d+1}$ that will be used to prove \cref{t:8x10}.

To start, we parametrize the paths in $\Paths{k}$ constructed in \cref{d:paths}.

\begin{definition}[Time parametrization]\label{r:time-parametrization}
    For $k\ge 0$, we regard each $\Path_k\in\Paths{k}$ as a function
    \[
        \Path_k : 
        10^{-k}\lb 0, \cL_k-1\rb
        \to 8^{-k}\Z^d,
    \]
    where recall $\cL_k$ is the number of vertices on $\Path_k$.
    By \eqref{e:length} we have $10^{-k}(\cL_k - 1) = M-1$, so we can identify $\Path_k$ with a continuous path $\Path_k:[0,M-1]\to\R^d$ by including the edges between the lattice points in $8^{-k}\Z^d$, so that each edge of length $8^{-k}$ is traversed in time $10^{-k}$.
    We emphasize that $\Path_k(t)\in 8^{-k}\Z^d$ if and only if $t\in 10^{-k}\Z$.

    Moreover, note that for any refinement $\Path_{k+1}$ of $\Path_k$ and any edge $(\Path_k(t),\Path_k(t+10^{-k}))$ in the lattice $8^{-k}\Z^d$ (note that this means $t\in 10^{-k}\Z$),
    the portion of $\Path_{k+1}$ that refines the edge $(\Path_k(t),\Path_k(t+10^{-k}))$ uses \emph{exactly} the times in $[t, t+10^{-k}]$. 
    This is because the refinement contains exactly $10$ edges in $8^{-(k+1)}\Z^d$, each of which is traversed in time $10^{-(k+1)}$.
    It follows by induction that for all $\ell>k$ and any refinement $\Path_\ell$ of $\Path_k$ at scale $\ell$, the portion of $\Path_\ell$ corresponding to successive refinements of the edge $(\Path_k(t),\Path_k(t+10^{-k}))$ uses \emph{exactly} the times in $[t, t+10^{-k}]$. 
\end{definition}

We now lift the paths to $\R^{d+1}$:

\begin{definition}[Lifts of paths]\label{d:lift}
    Fix $n\ge 1$ and $M\ge 10$.
    For any path $\Path\in \Paths{n}$, its \emph{lift} is the path $\wt{\Path} : [0, M-1]\to \R^{d+1}$ given by
    \begin{align*}
        \wt{\Path}(t) := (\Path(t), t),
    \end{align*}
    where the time parametrization for $\Path$ is defined in \cref{r:time-parametrization}.
\end{definition}

The next technical lemma ensures that the lifted paths are candidates for directed percolation as in \eqref{e:8x10-critical-prob}.

\begin{lemma}\label{l:lifts-are-admissible}
    Fix $n\ge 1$ and $M\ge 10$.
    Let $\wt{\Zdir_n^{d+1}}$ be the scaled directed lattice defined in \eqref{e:8x10-lattice}.
    {For any path $\Path \in \Paths{n}$, its lift $\wt{\Path}$, restricted to times $t\in10^{-n}\Z\cap [0,M-1]$, is a path in the directed lattice $\wt{\Zdir_n^{d+1}}$.
    Further, the continuous path $\wt{\Path}$ connects $\mathbb{S}^d$ to $M\mathbb{S}^d$,}
    where $\mathbb{S}^d$ is the unit $\ell^1$-sphere in $\R^{d+1}$.
\end{lemma}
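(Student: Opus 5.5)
Both claims follow by unwinding the definitions, so I will check them in order.

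\textbf{Directed lattice property.} By Definition \ref{r:time-parametrization}, $\Path=\Path_n\in\Paths{n}$ is a nearest-neighbour lattice path on $8^{-n}\Z^d$. Each edge has length $8^{-n}$ and is traversed in time $10^{-n}$, and $\Path(t)\in 8^{-n}\Z^d$ exactly when $t\in 10^{-n}\Z$.

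For $t\in 10^{-n}\Z\cap[0,M-1-10^{-n}]$, the lift satisfies
\[
\wt\Path(t+10^{-n})-\wt\Path(t)=\bigl(\pm 8^{-n}\ee_i,\;10^{-n}\bigr)=\cS_n(\pm\ee_i+\ee_{d+1})
\]
for some $i\in\lb 1,d\rb$. Moreover $\wt\Path(t)\in 8^{-n}\Z^d\times 10^{-n}\Z=\cS_n(\Z^{d+1})$. By Definition \ref{def:directed} with $\e=1$, the pair $\bigl(\cS_n^{-1}\wt\Path(t),\,\cS_n^{-1}\wt\Path(t+10^{-n})\bigr)$ is an edge of $\Zdir_1^{d+1}$. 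Hence the restricted lift is a path in $\wt{\Zdir_n^{d+1}}=\cS_n(\Zdir_1^{d+1})$.

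The continuous lift is the linear interpolation between these vertices, since both $\Path$ and $t\mapsto t$ are affine on each interval $[t,t+10^{-n}]$. So it agrees with the continuous identification of that lattice path.

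\textbf{Crossing from $\bS^d$ to $M\bS^d$.} I will use \eqref{e:displacement-27} together with continuity of $t\mapsto\norm{\wt\Path(t)}_1$ on $[0,M-1]$.

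At $t=0$ we have
\[
\norm{\wt\Path(0)}_1=\norm{\Path(0)}_1\le \tfrac27<1.
\]
At $t=M-1$ we have
\[
\norm{\wt\Path(M-1)}_1=\norm{\Path(M-1)}_1+(M-1)\ge 2(M-1)-\tfrac27\ge M,
\]
where the last inequality uses $M\ge 10$. By the intermediate value theorem, the lift hits $\bS^d$ at some time $t_0$ and hits $M\bS^d$ at some later time $t_1>t_0$. The segment $\wt\Path|_{[t_0,t_1]}$ is then a connection from $\bS^d$ to $M\bS^d$.

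\textbf{Possible gap.} If "connecting" is instead read as requiring lattice vertices inside the unit ball and outside the $M$-ball, I would round $t_0$ and $t_1$ outward to adjacent lattice times. Since $\norm{\wt\Path(t)}_1$ changes by at most $8^{-n}+10^{-n}$ per lattice step, the margins $1-\tfrac27$ and $M-1-\tfrac27$ absorb this. This interpretive point about $\theta$ is the only real subtlety; the rest is bookkeeping of the scalings.
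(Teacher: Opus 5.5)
Your proof is correct and follows the paper's argument essentially verbatim: for the lattice claim you use the increment $\wt\Path(t+10^{-n})-\wt\Path(t)=\pm 8^{-n}\ee_i+10^{-n}\ee_{d+1}$, and for the crossing claim you combine the bounds of Remark~\ref{r:27-displacement} with $\norm{\wt\Path(t)}_1=\norm{\Path(t)}_1+t$ and continuity. Your additional care (restricting increments to $t\le M-1-10^{-n}$, ordering $t_0<t_1$, and the rounding remark) is harmless and slightly more precise than the paper's version.
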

\begin{proof}
    Fix $t\in 10^{-n}\Z \cap [0,M-1]$.
    By \cref{r:time-parametrization}, we have $\Path(t) \in 8^{-n}\Z^d$,
    and moreover $\Path(t+10^{-n}) - \Path(t) = \pm 8^{-n}\ee_i$ for some $i\in\lb 1,d\rb$. 
    Therefore,
    $\wt{\Path}(t) \in 8^{-n}\Z^d \times 10^{-n}\Z$, and 
    \begin{align*}
        \wt{\Path}(t+10^{-n}) - \wt{\Path}(t)
        &= \pm 8^{-n}\ee_i
        +  10^{-n}\ee_{d+1}.
    \end{align*}
    This implies the first claim.
    It remains to show that $\wt{\Path}$ connects $\mathbb{S}^d$ and $M\mathbb{S}^d$.
    By Remark \ref{r:27-displacement}, each path $\Path\in \Paths{n}$ satisfies $\norm{\Path(0)}_1 \le \frac{2}{7}$ and $\norm{\Path(M-1)}_1 \ge M-1-\frac{2}{7}$.
    Since $\lVert\wt{\Path}(t)\rVert_1 = \norm{\Path(t)}_1 + t$, we get
    \begin{align*}
        \norm{\wt{\Path}(0)}_1 \le \frac{2}{7}
        \qquad\textup{and}\qquad \norm{\wt{\Path}(M-1)}_1 \ge 2(M-1)-\frac{2}{7} \ge M.
    \end{align*}
    Since $\wt{\Path} : [0,M-1]\to\R^{d+1}$ is continuous, it must pass through $\mathbb{S}^d$ and $M\mathbb{S}^d$.
\end{proof}

We turn next to the proof of \cref{t:8x10}. The proof implements a second moment argument adapting the proof of \cite[Theorem 1.5]{DGZPercolationThickPoints2026}.
To aid exposition, we first review the latter in the upcoming \cref{s:second-moment}, and then deduce 
\cref{t:8x10}
in \cref{s:proof-8x10}.

\subsection{Second moment calculation}\label{s:second-moment}
While the fractal percolation model is defined by independently retaining compact boxes at different scales, it will be technically convenient to in fact prove a mild strengthening of \cref{t:8x10} where the boxes are half-open (so that they are pairwise disjoint).
This was also done in the proof of Theorem \ref{t:sublinear}, see Remark \ref{r:half-open}. 
Towards this we introduce the following notation.

For $k\in\Zpos$ we denote by $\BB_k$ the set of boxes in $\R^d$ of the form
\begin{align}\label{boxpoint}
    x+\kbox{k},
    \qquad x\in 8^{-k}\Z^d,
\end{align}
and denote by $\BBlift_k$ the set of space-time boxes in $\R^{d+1}$ of the form
\begin{align}\label{boxpoint1}
    (x,t) + \kboxlift{k},
    \qquad (x,t)\in 8^{-k}\Z^d \times 10^{-k}\Z.
\end{align}
Note that the fractal percolation configuration obtained by independently retaining each box in $\BB_k$ (resp. $\BBlift_k$) with probability $p$ is naturally identified with a subset of $A_\infty(d,8)$ (resp. of $\wt{A}_\infty(d,8)$),
where the latter were defined in the discussion following \eqref{e:fractal-perc-definition} and Section \ref{s:8x10-intro}.

For a path $\Path\in\cP^{n,M}$, we denote by $\BB_k(\Path)$ the set of boxes $B\in\BB_k$ with $\Path\cap B\ne \varnothing$.
Similarly, for the lift $\Pathlift$ of $\Path$ (\cref{d:lift}), we denote by $\BBlift_k(\Pathlift)$ the set of boxes $\wt{B}\in \BBlift_k$ with $\Pathlift\cap \wt{B}\ne\varnothing$.

For $n\ge 1$, we say that a path $\Path\in\Paths{n}$ is \emph{good} if
\[
    \text{all of the boxes in } \bigcup_{k=0}^n \BB_k(\Path)
    \text{ are retained.}
\]
\begin{remark}
    Let us mention that the above notion of ``good path'' differs somewhat from that in \cite{DGZPercolationThickPoints2026}.
    The difference stems from the fact that \cite{DGZPercolationThickPoints2026} actually study the set of thick points of the $d$-dimensional branching random walk.
    Since the notion of thickness does not depend on the first $k$ scales for any fixed $k$, \cite{DGZPercolationThickPoints2026} only need to construct a path that is good at a $1-o(1)$ fraction of the scales.
    However, in fractal percolation, there is always a positive probability of the unit ball being disconnected from infinity, which motivates the above notion of goodness.
\end{remark}

The above preparation allows us to state the key assertion of \cite{DGZPercolationThickPoints2026}, which is proved using the second moment method to establish existence of good paths.
\begin{proposition}
    [{\cite[Proposition 2.1]{DGZPercolationThickPoints2026}}]
    \label{p:DGZ2.1}

    Fix any $p\in(0,1)$.
    For all sufficiently large $d$, we have
    \begin{align*}
        \inf_{M\ge 10}\,
        \inf_{n\ge 1}
        \P_p\left(
            \textup{there exists a good path in } \Paths{n}
        \right) > 0.
    \end{align*}
\end{proposition}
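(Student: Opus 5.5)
The plan is the second moment method applied to the random variable
\[
    Z_n := \sum_{\Path\in\Paths{n}} \1\{\Path\text{ is good}\}.
\]
By Paley--Zygmund, $\P_p(Z_n>0)\ge \E[Z_n]^2/\E[Z_n^2]$. It therefore suffices to show $\E[Z_n^2]\le C\,\E[Z_n]^2$ with $C=C(p,d)$ independent of $n$ and $M$.

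\textbf{First moment.} For fixed $\Path$, $\P(\Path\text{ good})=p^{\sum_{k=0}^n|\BB_k(\Path)|}$. I first bound $|\BB_k(\Path)|$. An edge of $\Path_k$ together with its refinements stays within $O(8^{-k})$ of that edge and lies in $B_x\cup B_y$ by Lemma \ref{l:tube}\ref{p4}. Hence $|\BB_k(\Path)|$ is comparable to the number of vertices $\cL_k$ of the $k$-skeleton, up to an absolute constant factor (I expect a factor like $2$). This gives
\[
    \E[Z_n]=\sum_{\Path}p^{\sum_k|\BB_k(\Path)|}.
\]
Each refinement step multiplies the count of paths by $(\qq-10)^{\cL_k-1}$ while costing at most $p^{O(\cL_k)}$. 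Since $\qq\asymp d\to\infty$, the entropy $(\qq-10)$ per edge beats $p^{-O(1)}$ once $d$ is large.

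\textbf{Second moment.} Write
\[
    \E[Z_n^2]=\sum_{\Path,\Path'}\P(\Path\text{ good})\,\P(\Path'\text{ good})\,p^{-\sum_k|\BB_k(\Path)\cap\BB_k(\Path')|}.
\]
Decompose pairs according to their skeleton overlap, i.e., the set of scales $k$ and edges of the $k$-skeletons at which $\Path_k$ and $\Path'_k$ share boxes. The key structural input is Remark \ref{r:refinements}. Two paths sharing a $k$-box essentially share a $k$-skeleton edge or are nearby. Given a common edge at scale $k$, the conditional probability that the refined sub-paths continue to share boxes at scale $k+1$ is at most $O(1/\qq)$, because the $\qq-10$ refinement choices use disjoint hyperplanes by Lemma \ref{l:tube}\ref{p3}.

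Summing over overlap patterns then reduces the ratio $\E[Z_n^2]/\E[Z_n]^2$ to the expected value of
\[
    \exp\Bigl(\log(1/p)\cdot \#\{\text{shared boxes}\}\Bigr)
\]
for two independent uniformly chosen paths. The shared boxes are organized as a branching-type structure in which each shared box at scale $k$ spawns a shared box at scale $k+1$ with probability $O(1/\qq)$. Its expected weighted size is finite as long as $p^{-O(1)}\cdot O(1/\qq)<1$, which holds for $d$ large. Boxes at coarse scales, and the initial overlap near the origin where all $\Paths{0}$ paths start, contribute only a bounded factor. Here I would use that $\Paths{0}$ consists of increasing paths of length $M$, so two independent such paths meet in $O(1)$ vertices in expectation once $d$ is large. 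This bound must be uniform in $M$, and it works because random increasing walks in high dimension separate quickly.

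\textbf{Main obstacle.} The hardest step is the overlap control: showing that two refinements of \emph{different} skeleton edges, or of nearby non-identical skeletons, rarely occupy common boxes at finer scales. This requires a return-probability estimate for the induced random walk of relative displacements, uniform over scales and over $M$. This is presumably the lemma of Appendix \ref{a:return-prob}. I would try to prove it by coupling the relative position of the two paths at successive scales to a lazy random walk in $\Z^d$. Its return probability is $O(1/d)$, which would give the geometric decay needed to close the bound on the ratio.
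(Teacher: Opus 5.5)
Your architecture matches the paper's: Paley--Zygmund, reduction to an exponential moment of the overlap of two independent uniform paths, subcritical branching with offspring probability $O(1/d)$, and a high-dimensional return estimate. However, the step you single out as the main obstacle is misdiagnosed, and the route you propose for it would not close the argument.

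The missing idea is \emph{locality of refinements}. By Lemma~\ref{l:tube}\ref{p4} and induction (this is Lemma~\ref{l:DGZ-2.12}), every refinement of $\Path_k$ at every finer scale stays inside the union of the boxes $x+\kbox{k}$ over vertices $x$ of $\Path_k$. Hence $\BB_k(\Path)=\BB_k(\Path_k)$, and overlaps between $\Path_\ell$ and $Q_\ell$ for $\ell>k$ can only occur inside boxes of \emph{common} vertices of $\Path_k$ and $Q_k$. This has three consequences.
\begin{itemize}
\item Nearby but vertex-disjoint skeletons never produce overlaps at finer scales.
\item $Y_k=|\Path_k\cap Q_k|$ vanishes as soon as $Y_{k-1}=0$, so the overlap process is a branching process \emph{without immigration}, started from $Y_0=|\Path_0\cap Q_0|$.
\item The only fine-scale input is the $O(1/d)$ probability that refinements coincide inside a shared box, i.e.\ Lemma~\ref{l:domination} together with \eqref{e:100}.
\end{itemize}
The absence of immigration is exactly what makes the bound uniform in $n$. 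A per-scale estimate, even one uniform over scales, would allow new overlaps at each of the $n$ scales and make $\bfE[p^{-\sum_{k\le n}Y_k}]$ grow with $n$. Moreover, a random walk of relative displacements is not the right object for fine-scale overlaps. Refined paths are not monotone (the tube paths backtrack), so set intersections at scale $k\ge 1$ are not same-time coincidences.

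The random-walk estimate enters only at scale $0$. Because paths in $\Paths{0}$ are coordinatewise increasing, $|\Path_0\cap Q_0|$ is dominated by the number of visits to $\0$ of the walk with increments $\ee_i-\ee_j$. What is needed there is not ``$O(1)$ meetings in expectation'' but a geometric tail $\bfP(|\Path_0\cap Q_0|>k)\le q_d^{k}$ with $q_d\to0$ (Lemmas~\ref{l:Y0-geometric} and~\ref{l:return-probability}). Then $q_d\,p^{-C}<1$ and the exponential moment is finite.

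Separately, your unnormalized count needs care. With $Z_n$ the ratio $\E[Z_n^2]/\E[Z_n]^2$ is an average over pairs drawn from the measure tilted by $\P(\Path\text{ good})$. Reducing it to uniform pairs therefore needs $\P(\Path\text{ good})$ to be constant in $\Path$. Knowing $|\BB_k(\Path)|$ only up to a factor $2$ does not suffice, since the weights could then differ by factors exponential in $\sum_k\cL_k$. Locality in fact gives $|\BB_k(\Path)|=\cL_k$ exactly. Alternatively, normalize each indicator by $\P(\Path\text{ good})$, as the paper does. The first-moment entropy count plays no role in Paley--Zygmund.
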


For completeness we sketch the proof of \cref{p:DGZ2.1}.
Consider the random variable
\[
    \cN=\cN(p,n,M) := \sum_{\Path \in \Paths{n}} \frac{1}{\P_p(\Path \textup{ is good})}\1_{\Path \textup{ is good}}.
\]
By the Paley--Zygmund inequality,
\begin{align}\label{e:pz}
    \P_p(\cN > 0) \ge \frac{\E_p[\cN]^2}{\E_p[\cN^2]}.
\end{align}
Thus to prove \cref{p:DGZ2.1}, it suffices to show that the ratio $\frac{\E_p[\cN]^2}{\E_p[\cN^2]}$ is bounded away from $0$ uniformly in $n$ and $M$.
Towards this, observe that since $\E_p[\cN]=|\Paths{n}|$, we have
\begin{align}\label{e:exp-moment}
    \frac{\E_p[\cN^2]}{\E_p[\cN]^2}
    &= 
    \frac{1}{|\Paths{n}|^2}
    \sum_{\Path, Q \in \Paths{n}} \frac{\P_p(\Path \text{ and } Q \text{ are good})}{\P_p(\Path \text{ is good}) \cdot \P_p(Q \text{ is good})}\nonumber\\
    &= 
    \frac{1}{|\Paths{n}|^2}
    \sum_{\Path, Q \in \Paths{n}} \frac{p^{\sum_{k=0}^{n}|\BB_k(\Path)\cup\BB_k(Q)|}}{p^{\sum_{k=0}^n |\BB_k(\Path)|} \cdot p^{\sum_{k=0}^n |\BB_k(Q)|}}\nonumber\\
    &= 
    \frac{1}{|\Paths{n}|^2}
    \sum_{\Path, Q \in \Paths{n}} p^{-\sum_{k=0}^{n}|\BB_k(\Path)\cap\BB_k(Q)|}\nonumber\\
    &= \bfE\left[
        p^{-\sum_{k=0}^{n}|\BB_k(\Path)\cap\BB_k(Q)|}
    \right],
\end{align}
where $\bfE$ denotes the expectation with respect to two independent uniform paths $\Path,Q\in\Paths{n}$.
Thus the problem reduces to upper bounding the exponential moments of the random variable $\sum_{k=0}^{n}|\BB_k(\Path)\cap\BB_k(Q)|$.
For this, the key observation is that $\sum_{k=0}^{n}|\BB_k(\Path)\cap\BB_k(Q)|$ is dominated by a branching process that is subcritical for sufficiently large $d$.
We turn now to making this precise.

The hierarchical description of the paths manifests in the following bound, quoted from
\cite[(2.9)]{DGZPercolationThickPoints2026}:
\begin{align}\label{e:100}
    \sum_{k=0}^n |\BB_k(\Path)\cap\BB_k(Q)| \le 
    100 \sum_{k=0}^n |\Path_k \cap Q_k|.
\end{align}
Note that if there were an exact tree-like structure, the constant $100$ could be replaced by $1$. 
Nonetheless, the above reduces upper bounding \eqref{e:exp-moment} to upper bounding exponential moments of $\sum_{k=0}^n |\Path_k \cap Q_k|$.

Finally, the key advantage of the ``space'' or ``room'' in high enough dimensions is that there can be many paths that are nearly disjoint of each other.
This is reflected in the following bound, which
captures the aforementioned branching process structure.

\begin{lemma}[{\cite[Lemma 2.11]{DGZPercolationThickPoints2026}}]\label{l:domination}
    There exists $c>0$ such that the following holds for all $n\ge 1$, all $M\ge 10$, and all $d\ge 1000$.
    Let $\Path,Q$ be chosen independently and uniformly at random from $\Paths{n}$.
    Write $Y_k := |\Path_k \cap Q_k|$ for $k\in\lb 0,n\rb$.
    Then for any $k\in\lb 1,n\rb$, conditional on 
    $Y_0,Y_1,\dots,Y_{k-1}$,
    \[
        Y_k \quad\textup{is stochastically dominated by}\quad
        11\sum_{i=1}^{2Y_{k-1}}Z_i,
    \]
    where $Z_i$ are i.i.d. Bernoulli random variables with success probability 
    $\frac{c}{d}$.
\end{lemma}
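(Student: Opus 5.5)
The plan is to expose the tree structure behind the coupling of $\Path$ and $Q$. Conditionally on the skeletons $\Path_0,\dots,\Path_{k-1}$ and $Q_0,\dots,Q_{k-1}$, the refinements $\Path_k$ and $Q_k$ are generated edge by edge (Definition \ref{d:paths}). Each edge of $\Path_{k-1}$ is refined by a choice that is uniform over a deterministic set of $\qq-10$ candidate refinements, and the sets for different edges are chosen sequentially; the same holds for $Q_{k-1}$, independently of $\Path$. Indeed, by the equipartition in Remark \ref{r:refinements}, a uniform element of $\Paths{n}$ induces, at each level, uniform choices among the $\qq-10$ options given the past.

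I would bound $Y_k=|\Path_k\cap Q_k|$ by counting intersections edge by edge. Any vertex of $\Path_k$ lies on the refinement of some edge $e$ of $\Path_{k-1}$, and this refinement lies within $\ell^1$-distance $2\cdot 8^{-k}$ of $e$, inside $B_x\cup B_y$ by Lemma \ref{l:tube}\ref{p4}. So a common vertex forces the refined edges $e\in\Path_{k-1}$, $e'\in Q_{k-1}$ to be close. The key reduction is to show that such a pair must share a vertex of $\Path_{k-1}\cap Q_{k-1}$, up to a bounded number of configurations: by the geometry of $8^{-(k-1)}\Z^d$, two lattice edges at distance less than $8^{-(k-1)}-4\cdot 8^{-k}$ must share an endpoint. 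Each vertex $v\in\Path_{k-1}\cap Q_{k-1}$ is incident to at most two edges of $\Path_{k-1}$, which gives the $2Y_{k-1}$ trials. For each such edge $e$ of $\Path_{k-1}$ at $v$, the refinement of $e$ has at most $11$ vertices, which gives the factor $11$. The trial indicator $Z_i$ is the event that the refinement of $e$ meets the refinements of the at most two $Q$-edges at $v$.

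The main step, and the expected obstacle, is the estimate $\P(Z_i=1\mid\text{past},\ Q_k)\le c/d$. Fix the refinements of $Q$ near $v$; they are at most two paths with at most $22$ vertices in total. By Lemma \ref{l:tube}\ref{p3}, the candidate refinements of $e$ are disjoint away from the common start $z$, and in the $\cT_{x,y}$ case they are fully disjoint. Hence each fixed vertex other than $z$ is hit by at most one candidate, and at most $22$ of the $\qq-10$ candidates can meet $Q$'s refinements there. The exceptional shared start point $z$ requires care. If $\Path$ and $Q$ arrive at $v$ via the same refined endpoint $z$, intersection there is forced. To handle this, I would charge such forced intersections to the previous edge's trial, or absorb them into the constant $11$ by noting that the endpoint $z$ belongs to the refinement of the incoming edge. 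Since the choice for $e$ is uniform over $\qq-10\ge d/10-11$ options, the probability is at most $22/(\qq-10)\le c/d$ when $d\ge1000$.

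Finally, one must make these bounds hold sequentially, with the trials conditionally dominated by independent Bernoulli$(c/d)$ variables. I would order the trials along $\Path_{k-1}$ and reveal the refinement choices one at a time. Each choice is uniform given everything revealed so far, and only its intersection with the already-fixed refinements of $Q_k$ matters. A standard sequential coupling then yields the stochastic domination by $11\sum_{i\le 2Y_{k-1}}Z_i$ with i.i.d. $Z_i$.
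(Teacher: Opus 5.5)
The paper does not prove this lemma; it imports it from DGZ, so there is no in-paper proof to compare against. Your outline is the natural argument behind the constants $11$ and $2Y_{k-1}$, and its architecture is sound. Conditionally on the skeletons, $P_k$ and $Q_k$ are independent, and each refinement choice is uniform over $\qq-10$ candidates given the past. Refinements of a $P$-edge and a $Q$-edge with no common endpoint are disjoint, and this is exact, with no exceptional configurations: intersecting refinements force the parent edges to be within $\ell^1$-distance $4\cdot 8^{-k}$, while endpoint-disjoint lattice edges of $8^{-(k-1)}\Z^d$ are at distance at least $8^{-(k-1)}$. Finally, candidates are disjoint away from their common start. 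However, the final coupling step fails as you have set it up, and the treatment of $z$ needs to be made precise.

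\textbf{Indexing of the trials.} You index trials by pairs $(v,e)$. Then an edge $e=(v,w)$ of $P_{k-1}$ with both endpoints common yields two trials decided by the \emph{same} random refinement of $e$. Suppose $Q_{k-1}$ also contains $e$; for instance, at $k=1$ with a shared first edge, both first refinements are uniform on the same $\qq-10$ candidates. Then the two trials succeed simultaneously with probability at least $1/(\qq-10)\asymp 1/d$. This is incompatible with domination by two independent Bernoulli$(c/d)$ variables once $d$ is large. The fix is to index trials by the distinct edges of $P_{k-1}$ having an endpoint in $P_{k-1}\cap Q_{k-1}$. There are still at most $2Y_{k-1}$ of them, and nothing is lost, since a single refinement meets $Q_k$ in at most $11$ vertices.

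\textbf{Treatment of $z$.} For the first edge, let the trial set be its whole refinement, whose candidates are fully disjoint. For every later edge $e$, let the trial be the event that $\mathrm{ref}(e)\setminus\{z\}$ meets $Q_k$, where $z$ is the starting point of $\mathrm{ref}(e)$. These pieces partition the vertex set of $P_k$, so $Y_k\le 11\sum_e W_e$. A piece can meet $Q_k$ only if its parent edge has an endpoint in $P_{k-1}\cap Q_{k-1}$. Given $Q_k$ and the earlier choices of $P$, the sets $\mathrm{ref}(e)\setminus\{z\}$ over the $\qq-10$ candidates are pairwise disjoint. Moreover, $Q_k$ has at most $44$ vertices in the tube of $e$, coming from refinements of at most four $Q$-edges sharing an endpoint with $e$. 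Hence the conditional success probability is at most $44/(\qq-10)\le c/d$ for $d\ge 1000$. With these two changes your sequential coupling gives $Y_k\preceq 11\,\mathrm{Bin}(2Y_{k-1},c/d)$ conditionally on the skeletons. Averaging over the skeletons then yields the statement conditional on $Y_0,\dots,Y_{k-1}$.
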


Since the success probability $\frac{c}{d}$ tends to $0$ as $d\to\infty$, 
\cref{l:domination} implies that $\sum_{k=0}^n |\Path_k\cap Q_k|$ is stochastically dominated by a branching process that can be made arbitrarily subcritical by increasing $d$.
As a result, one can show that there exists $C>0$ such that for all $\lambda>0$ and all sufficiently large $d$ (depending only on $\lambda$ and $p$),
\begin{align}\label{e:expmoment}
    \bfE\left[
        p^{-\lambda\sum_{k=0}^n |\Path_k\cap Q_k|}
    \right]    
    \le \bfE\left[
        p^{-C\lambda |\Path_0\cap Q_0|}
    \right].
\end{align}
We next bound $|\Path_0\cap Q_0|$ using the following lemma, which is essentially a strengthening of the fact that random walk is transient in dimension $d\ge 3$:

\begin{lemma}[$0$-skeleton intersections]\label{l:Y0-geometric}
    Let $P_0,Q_0$ be as above. 
    There exists $q_d\in(0,1)$ depending only on $d$ such that 
    \begin{align*}
        \bfP(|P_0 \cap Q_0| \ge k) \le q_d^k
        \qquad\textup{for each integer $k\ge 0$}.
    \end{align*}
    Moreover, $q_d\to 0$ as $d\to\infty$.
\end{lemma}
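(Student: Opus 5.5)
We read $|P_0\cap Q_0|$ the way it enters \cref{l:domination}: as the number of \emph{edges} shared by the two $0$-skeletons. With this reading the case $k=1$ is meaningful. Both paths start at $\0$, so counting shared vertices would force $|P_0\cap Q_0|\ge 1$ always, whereas $Y_k$ in \cref{l:domination} grows by refining $2Y_{k-1}$ common pieces, i.e.\ common edges. The plan is to realise $P_0,Q_0$ as two independent random walks and reduce everything to a return-probability estimate for their difference.

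\textbf{Random walk representation.} Since $\Paths{0}$ is in bijection with $\lb 1,d\rb^{M-1}$, a uniform $P_0$ is the walk $S_t=\ee_{i_1}+\dots+\ee_{i_t}$, $t\in\lb 0,M-1\rb$, with i.i.d.\ uniform steps. Likewise $Q_0$ is an independent walk $S'_t$. Every vertex of either path at step $t$ has $\ell^1$-norm exactly $t$, so the two paths can only share vertices, and hence edges, at equal times. Thus an edge is shared at time $t$ if and only if $S_t=S'_t$ and $i_{t+1}=i'_{t+1}$. Write $D_t:=S_t-S'_t$, which is a Markov chain on $\Z^d$, and let $G_d:=\sum_{s\ge1}\P(D_s=0\mid D_0=0)$.

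\textbf{Geometric tail via the Markov property.} Let $\tau_1<\tau_2<\dots$ be the times at which a common edge is used. Right after $\tau_j$ we again have $D_{\tau_j+1}=0$, and the future increments are fresh and independent. Hence, by a union bound over $s$, conditionally on $\tau_1,\dots,\tau_j$,
\[
\P(\tau_{j+1}<\infty\mid \tau_1,\dots,\tau_j)\le \sum_{s\ge 0}\P(D_s=0\mid D_0=0)\cdot\frac1d=\frac{1+G_d}{d}.
\]
Here the factor $1/d$ is the probability that the next steps agree, which is independent of $D_s$. The same bound holds for $\tau_1$, since $D_0=0$. Iterating gives $\bfP(|P_0\cap Q_0|\ge k)\le q_d^k$ with $q_d:=(1+G_d)/d$. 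Truncation at time $M-1$ only helps, so the bound is uniform in $M$.

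\textbf{Main obstacle: bounding $G_d$.} It remains to show $G_d$ is bounded uniformly in large $d$, so that $q_d\to0$; this is where the content lies, and it is presumably the appendix lemma on random walk return probabilities. We have $\P(D_s=0)=\sum_x\P(S_s=x)^2$, the collision probability of a multinomial$(s;1/d,\dots,1/d)$ vector. We would split the sum in two ranges.
\begin{itemize}
\item For $s\le d$: use $\P(D_s=0)\le\max_x\P(S_s=x)$. The walks can only collide if each walk's multiset of $s$ coordinates agrees with the other's, which gives a bound of order $s!/d^{s}$ times combinatorial factors. This is at most $C/d$ for $s=1,2$ and decays geometrically in $s$ for $s\ll d$.
\item For $s\ge d$: use the local CLT / Fourier bound $\P(D_s=0)\le (Cd/s)^{d/2}$, coming from the $d$ nearly independent coordinate counts of variance $\approx s/d$. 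This is summable in $s$ as soon as $d\ge3$, and the sum tends to $0$ as $d\to\infty$.
\end{itemize}
Together these give $G_d=O(1)$, and in fact $G_d=O(1/d)$. Hence $q_d\le C/d\to0$. The delicate point is obtaining the intermediate range uniformly in $d$. If that fails, we would use the Fourier representation $\P(D_s=0)=\int_{[-\pi,\pi]^d}\bigl|\tfrac1d\sum_j e^{i\theta_j}\bigr|^{2s}\,d\theta/(2\pi)^d$ and bound the integrand directly.
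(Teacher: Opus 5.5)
Your reduction is sound and matches the paper's. $D_t=S_t-S'_t$ is exactly the walk $X$ of \cref{l:return-probability}, and your stopping-time/union-bound argument correctly gives a geometric tail with parameter $(1+G_d)/d$, uniformly in $M$. You should also say why $P_0,Q_0$ are independent and uniform on $\Paths{0}$: every path in $\Paths{k}$ has the same number of refinements.

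The gap is in the only nontrivial step, proving $G_d=\sum_{s\ge1}\P(D_s=\0)=o(d)$ uniformly in $d$. Your range $s\le d$ is not actually delicate. There $\max_x\P(S_s=x)=s!/d^{s}$ exactly, and $s!/d^s$ is nonincreasing for $s\le d$, so this range contributes at most $1/d+8/d^2$.

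The range $s\ge d$ is where the sketch fails, for two reasons.
\begin{itemize}
\item The claimed bound $\P(D_s=\0)\le (Cd/s)^{d/2}$ is false for fixed $d$ and $s\to\infty$. The walk $D$ lives on the rank-$(d-1)$ lattice $\{x:\sum_i x_i=0\}$, so $\P(D_s=\0)\asymp s^{-(d-1)/2}$; in particular summability needs $d\ge 4$, not $d\ge3$.
\item More seriously, even the corrected bound $(Cd/s)^{(d-1)/2}$, summed over $s\ge d$, is of order $C^{d/2}$. For a fixed constant $C$ this is $o(d)$ only if $C\le 1$, and tends to $0$ only if $C<1$. A local CLT with unspecified constants therefore proves nothing here, and uniformity in $d$ near $s\approx d$ is exactly what the lemma is about.
\end{itemize}
You flag this as delicate but do not resolve it.

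Within your route, Poissonization closes it. If $N_1,\dots,N_d$ are i.i.d.\ $\mathrm{Poi}(s/d)$, then for every $x$,
\[
\P(S_s=x)=\frac{\prod_{i=1}^d \P(N_i=x_i)}{\P(\mathrm{Poi}(s)=s)}\le e\sqrt{s}\,\bigl(2\pi\lfloor s/d\rfloor\bigr)^{-d/2}\le e\sqrt{s}\,\Bigl(\frac{d}{\pi s}\Bigr)^{d/2}\qquad (s\ge d).
\]
This uses $e^{-m}m^m/m!\le (2\pi m)^{-1/2}$ and $s!\le e\sqrt{s}\,(s/e)^s$. The sum over $s\ge d$ is then $O(\sqrt{d}\,\pi^{-d/2})$, which gives $G_d=O(1/d)$ as you wanted.

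The paper avoids local estimates altogether. It writes $G_d$ as the average over uniform $\theta\in[-\pi,\pi]^d$ of $\phi_d/(1-\phi_d)$, where $\phi_d(\theta)=|\frac1d\sum_j e^{\mathrm{i}\theta_j}|^2$. It then splits according to three regimes:
\begin{itemize}
\item $1-\phi_d>\frac12$, which contributes at most $2/d$;
\item $\d\le 1-\phi_d\le\frac12$, handled by Hoeffding;
\item $1-\phi_d<\d$, handled by the small-ball bound that $1-\phi_d<\e$ has probability at most $(C\e^{1/2}d)^{d-1}$.
\end{itemize}

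Separately, counting shared \emph{edges} is not the right repair of the $k=1$ issue. You are correct that with shared vertices the displayed bound fails at $k=1$, since both skeletons contain $\0$; the statement is off by one. But the paper's $|P_k\cap Q_k|$ does count vertices:
\begin{itemize}
\item its proof identifies $|P_0\cap Q_0|$ with the number of visits of $X$ to $\0$ up to time $M-1$;
\item the factor $11$ in \cref{l:domination} is the number of vertices of a refinement;
\item \eqref{e:100} must pay for the box $[0,1)^d$, which lies in $\BB_0(\Path)\cap\BB_0(Q)$ whether or not $P_0$ and $Q_0$ share an edge.
\end{itemize}
So your claim that \cref{l:domination} is about common edges is a misreading. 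The intended statement is $\bfP(|P_0\cap Q_0|\ge k)\le q_d^{k-1}$ for $k\ge1$, which is exactly the geometric law used in \eqref{e:expmoment2}. Your Markov argument adapts verbatim. Each further common vertex is a return of $D$ to $\0$, which by the strong Markov property has conditional probability $q_d=G_d/(1+G_d)\le G_d$. So the vertex version follows as soon as $G_d\to0$ is actually proved.
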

\begin{proof}
    By \cref{d:paths},
    the number of paths in $\Paths{n}$ with a given $0$-skeleton does not depend on the $0$-skeleton (see also Remark \ref{r:refinements}).
    Therefore, since $P_n,Q_n$ are independent and uniformly distributed in $\Paths{n}$, 
    their $0$-skeletons $P_0,Q_0$ are independent and uniformly distributed in $\Paths{0}$.
    
    Let $X$ be a random walk in $\Z^d$ started at $\0$, with independent increments $X(t+1)-X(t)$ drawn uniformly from $\{\ee_i - \ee_j : i,j\in\lb 1,d\rb\}$.
    By \cref{d:paths}, $|P_0\cap Q_0|$ is equal in distribution to the number of times $X$ visits the origin up to time $M-1$.
    So $|P_0\cap Q_0|$ is stochastically dominated by the total number of visits to the origin, denoted  $N:=|\{t\ge 0 : X(t) = \0\}|$.
    By the strong Markov property, $N$ is a Geometric random variable with $\bfP(N\ge k) = q_d^k$,
    where $q_d := \bfP(\exists t\ge 1 : X(t)=\0)$ is the probability of $X$ returning to $\0$.
    Finally, that $q_d \to 0$ will be proved later in \cref{l:return-probability}.
\end{proof}

By \cref{l:Y0-geometric} and a simple computation with Geometric random variables, for $d$ large enough that $q_d\, p^{-C\lambda} \le 1/2$, we have
\begin{equation}\label{e:expmoment2}
    \begin{split}
        \bfE\left[
            p^{-C\lambda |P_0 \cap Q_0|}
        \right]
        &\le 
        \frac{(1-q_d)p^{-C\lambda}}{1 - q_d\, p^{-C\lambda}}\\
        &\le 2 p^{-C\lambda}.
    \end{split}
\end{equation}
Plugging in $\lambda = 100$, 
we get that for all sufficiently large $d$,
\begin{align*}
    \frac{\E_p[\cN^2]}{\E_p[\cN]^2}
    \overset{\eqref{e:exp-moment}, \eqref{e:100}}{\ge}
    \frac{1}{\bfE\left[
            p^{-100 \sum_{k=0}^n |P_k \cap Q_k|}
        \right]}
    \overset{\eqref{e:expmoment}}{\ge}
    \frac{1}{\bfE\left[
            p^{-100C |P_0 \cap Q_0|}
        \right]}
    \overset{\eqref{e:expmoment2}}{\ge}
    \frac12 p^{100C},
\end{align*}
uniformly in $n\ge 1$ and $M\ge 10$.
This together with \eqref{e:pz} implies \cref{p:DGZ2.1}.
\qed \\

Versions of the above estimates continue to hold for the lifted paths (Definition \ref{d:lift}), which we turn to next.

\subsection{Proof of Theorem \ref{t:8x10}}\label{s:proof-8x10}

We start by redefining the notion of good paths in terms of lifts:
for $n\ge 1$, a path $\Path\in\Paths{n}$ is \emph{good} if
\[
    \text{all boxes in } \bigcup_{k=0}^n \BBlift_k(\Pathlift)
    \text{ are retained.}
\]
By \cref{l:lifts-are-admissible}, in order to prove Theorem \ref{t:8x10}, it suffices to show that for all $p\in(0,1)$ and all sufficiently large $d$,
\begin{align}\label{e:8x10-reformulated}
    \inf_{M\ge 10}\;
    \inf_{n\ge 1}
    \P_p\left(
        \textup{there exists a good path in }\Paths{n}
    \right)
    > 0.
\end{align}
By the same argument used to prove \cref{p:DGZ2.1}, 
we find that to prove \eqref{e:8x10-reformulated}, it will suffice to show that the exponential moment
\[
    \bfE\left[
        p^{-\sum_{k=0}^n |\BBlift_k(\Pathlift)\cap\BBlift_k(\wt{Q})|}
    \right]
\]
is bounded uniformly in $n\ge 1$ and $M\ge 10$.
This will be deduced from the following technical result which relates the intersection size of the lifted paths and their projections.
\begin{proposition}\label{p:boxcount-final}
    For any $n\ge 1$ and $M\ge 10$ and $k\in\lb 0,n\rb$, and any two paths $\Path,Q\in \Paths{n}$,
    we have
    \begin{align*}
        |\BBlift_k(\Pathlift) \cap \BBlift_k(\wt{Q})|
        \le 5 |\BB_k(\Path) \cap \BB_k(Q)|.
    \end{align*}
\end{proposition}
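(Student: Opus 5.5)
The plan is to compare the two sides box by box through the projection map $\wt{B} = B\times [s,s+10^{-k}) \mapsto B$ from $\BBlift_k$ to $\BB_k$. If $\wt{B}\in\BBlift_k(\Pathlift)\cap\BBlift_k(\wt{Q})$, then there are times $t,t'\in[s,s+10^{-k})$ with $\Path(t)\in B$ and $Q(t')\in B$. Hence $B\in\BB_k(\Path)\cap\BB_k(Q)$, so the projection maps $\BBlift_k(\Pathlift)\cap\BBlift_k(\wt{Q})$ into $\BB_k(\Path)\cap\BB_k(Q)$. It therefore suffices to prove the following single-path statement: for every $\Path\in\Paths{n}$ and every $B\in\BB_k$, the set of times $\{t\in[0,M-1] : \Path(t)\in B\}$ meets at most $5$ of the intervals $[j10^{-k},(j+1)10^{-k})$, $j\in\Z$. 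The fibre of the projection over $B$ then has at most $5$ elements, which gives the factor $5$.

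To prove the single-path statement I would use the $k$-skeleton $\Path_k$ of Remark \ref{r:refinements} together with the time parametrization of Definition \ref{r:time-parametrization}. On each interval $[j10^{-k},(j+1)10^{-k}]$, the path $\Path$ consists exactly of the iterated refinements of the single edge $e_j=(\Path_k(j10^{-k}),\Path_k((j+1)10^{-k}))$. By Lemma \ref{l:tube}\ref{p4} and the geometric decay of displacements in Remark \ref{r:27-displacement}, that portion of $\Path$ stays within $\ell^1$-distance $\tfrac{2}{7}8^{-k}$ of $e_j$. Consequently, if $\Path$ meets $B$ during time block $j$, then $e_j$ lies within distance $\tfrac27 8^{-k}$ of $B$. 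Since $e_j$ has lattice endpoints in $8^{-k}\Z^d$, this forces an endpoint of $e_j$ to lie in the corner set $\cen$-neighbourhood of $B$, essentially $\bl(B)$ or one of the lattice points of $\overline{B}$ adjacent to it. So the count reduces to showing that the edges of $\Path_k$ coming within $\tfrac27 8^{-k}$ of $B$ occur at at most $5$ indices $j$, and I would aim to show these indices are consecutive.

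For the consecutiveness I would argue by induction on $k$, mirroring the self-avoidance argument in Definition \ref{d:paths}\ref{refine3}. At $k=0$, $\Path_0$ is coordinatewise increasing, so it passes near any unit box only during a bounded run of consecutive steps: a monotone path cannot return once it has increased the $\ell^1$-norm by more than $2$. For the inductive step, the edges of $\Path_k$ near $B$ all come from refinements of the edges of $\Path_{k-1}$ near the parent box of $B$. By induction these parent edges form a short consecutive run, and within a single tube $T_{x,y}$ the explicit paths of Lemma \ref{l:tube} move monotonically along the tube direction $\pm\ee_i$, apart from one or two transverse steps at the ends. So inside one refinement the times spent near $B$ are consecutive, and across the junction of two consecutive refinements they remain consecutive.

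The main obstacle is the sharp constant $5$. This requires careful case bookkeeping over the six constructions \eqref{e:case1}--\eqref{e:Txycase2}. One has to check that a scale-$k$ box, which has side $8^{-k}$ while the refinement moves distance $8^{-k}$ per time $10^{-k}$, can be touched by the limiting path during at most five consecutive time blocks. The five blocks arise from entering with transverse and corner steps, the main traversal, and exiting, and the corner cases at refinement junctions must also be controlled. If the sharp count proves awkward, I would first establish a bound with some absolute constant, which suffices for \eqref{e:8x10-reformulated} after adjusting $\lambda$ in \eqref{e:expmoment}, and then tighten the casework to reach $5$.
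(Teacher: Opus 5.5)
Your reduction via the projection $B\times[s,s+10^{-k})\mapsto B$ is exactly the paper's map $\phi$, and the single-path statement you reduce to is true. The gap is in how you propose to prove it.

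You weaken Lemma \ref{l:tube}\ref{p4} to the statement that, during block $j$, $\Path$ stays within $\ell^1$-distance $\tfrac27 8^{-k}$ of the skeleton edge $e_j$. A lattice edge of $8^{-k}\Z^d$ at $\ell^1$-distance less than $8^{-k}$ from $\overline{B}$ must meet $\overline{B}$. So your criterion flags every block whose skeleton edge is incident to any of the $2^d$ corners of $\overline{B}$, and these corners span $\ell^1$-diameter $d\,8^{-k}$. In high dimension this is far too lossy.

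Concretely, the claim you reduce to (at most five flagged indices) is false already at $k=0$. Take $M=10$, $\Path_0=(\0,\ee_1,\ee_1+\ee_2,\dots,\ee_1+\cdots+\ee_9)\in\Paths{0}$ and $B=[0,1)^d\in\BB_0(\Path_0)$. Every vertex of $\Path_0$ is a corner of $\overline{B}$, so all nine blocks are flagged, and of order $d$ blocks once $M>d$. Yet $\Path$ meets $B$ only during the first block.

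This has three consequences:
\begin{itemize}
\item Your base case fails: a monotone path can stay near a unit box for a long run of steps.
\item The proposed induction through tubes and junctions is only a sketch, and it would have to fight this overcounting.
\item The fallback to ``some absolute constant'' is not available on this route either. The flagged count grows with $d$, and a $d$-dependent constant is useless in \eqref{e:expmoment}, where $\lambda$ is fixed before $d$ is taken large.
\end{itemize}

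The missing idea is to keep exact containment instead of a distance bound. Iterating Lemma \ref{l:tube}\ref{p4}, and using that half-open $8$-adic boxes nest, gives \cref{l:DGZ-2.12}\ref{212_2}: on $[j10^{-k},(j+1)10^{-k}]$ the path $\Path$ lies in $(x+[0,8^{-k})^d)\cup(y+[0,8^{-k})^d)$, where $e_j=(x,y)$. The boxes of $\BB_k$ are pairwise disjoint, so $\Path$ can be in $B$ during a closed block only if $\bl(B)$ is an endpoint of that block's skeleton edge. The path $\Path_k$ is self-avoiding, so it visits $\bl(B)$ at most once, say at time $\tau$. Hence $\Path$ can be in $B$ only at times in $[\tau-10^{-k},\tau+10^{-k}]$. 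That interval meets at most three of the blocks $[j10^{-k},(j+1)10^{-k})$, comfortably within $5$, and no tube casework is needed.

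The paper packages these two facts as \cref{l:time-between-entry-exit} and \cref{l:box-times-controlled-by-top-scale}. The first says the $k$-skeleton spends time at most $2\cdot 10^{-k}$ in $B$, using $B\cap 8^{-k}\Z^d=\{\bl(B)\}$ and self-avoidance. The second says that $\Path$'s times in $B$ lie within $10^{-k}$ of the skeleton's. Together they confine all visits to a window of length $4\cdot 10^{-k}$, hence at most five lifted boxes.
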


Postponing the proof of \cref{p:boxcount-final} for a moment, we now quickly deduce \cref{t:8x10}:

\begin{proof}[Proof of \cref{t:8x10}]
    As just mentioned, by \cref{l:lifts-are-admissible}, it suffices to prove \eqref{e:8x10-reformulated}.
The same argument as in \eqref{e:pz}--\eqref{e:exp-moment} implies that 
    \begin{align*}
        \P_p\left(
            \textup{there exists a good path in }\Paths{n}
        \right)
        &\ge 
        \frac{1}{\bfE\left[
            p^{-\sum_{k=0}^n |\BBlift_k(\Pathlift)\cap\BBlift_k(\wt{Q})|}
        \right]},
    \end{align*}
    where the expectation is over $P,Q$ drawn independently and uniformly at random from $\Paths{n}$.
    By \cref{p:boxcount-final} and \eqref{e:100},
    \begin{align*}
        \frac{1}{\bfE\left[
            p^{-\sum_{k=0}^n |\BBlift_k(\Pathlift)\cap\BBlift_k(\wt{Q})|}
        \right]}
        &\ge
        \frac{1}{\bfE\left[
            p^{-500\sum_{k=0}^n |\Path_k\cap Q_k|}
        \right]}.
    \end{align*}
    Applying \eqref{e:expmoment}--\eqref{e:expmoment2} with $\lambda=500$, we conclude that for all sufficiently large $d$, uniformly in $n,M$,
    \begin{align*}
        \P_p\left(
            \textup{there exists a good path in }\Paths{n}
        \right)
        &\ge \frac12 p^{500C},
    \end{align*}
    proving \eqref{e:8x10-reformulated} and hence also \cref{t:8x10}.
\end{proof}

It remains to prove \cref{p:boxcount-final}.

\addtocontents{toc}{\SkipTocEntry}
\subsection*{Proof of Proposition \ref{p:boxcount-final}}

We will actually prove the following stronger statement:

\begin{proposition}\label{p:boxcount}
    Fix any path $\Path\in\Paths{n}$, any $k\in\lb 0,n\rb$, and any box $B\in \BB_k(\Path)$.
    Then there exist at most five boxes of the form
    \begin{align*}
        B \times [t, t+10^{-k})
        \quad\text{in}\quad \BBlift_k(\Pathlift),\qquad t\in10^{-k}\Z.
    \end{align*}
\end{proposition}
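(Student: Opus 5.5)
The set of times $t$ with $\Pathlift(t)\in B\times\R$ is exactly the set of times $P(t)\in B$. So the plan is to show that the set of times at which $\Path$ visits $B$ is contained in an interval of length less than $3\cdot 10^{-k}$. Such an interval meets at most $4$ of the intervals $[t,t+10^{-k})$ with $t\in 10^{-k}\Z$, and $5$ leaves slack for boundary effects. The tool is the skeleton structure of Remark \ref{r:refinements} together with the exact time parametrization of Definition \ref{r:time-parametrization}.

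First consider the $k$-skeleton $\Path_k$ of $\Path$. By Definition \ref{r:time-parametrization}, for each $s\in 10^{-k}\Z$ the portion of $\Path$ on $[s,s+10^{-k}]$ is the iterated refinement of the edge $(\Path_k(s),\Path_k(s+10^{-k}))$. By Lemma \ref{l:tube}\ref{p4}, at each scale the refinement of an edge $(x,y)$ stays inside $B_x\cup B_y$. Iterating and summing the geometric displacements as in Remark \ref{r:27-displacement}, this portion stays within $\ell^1$-distance $\frac{2}{7}8^{-k}$ of the segment $[\Path_k(s),\Path_k(s+10^{-k})]$. More precisely, it stays in the union of the two $8^{-k}$-boxes $\Path_k(s)+\kbox{k}$ and $\Path_k(s+10^{-k})+\kbox{k}$, up to a slight spillover near the lower-left faces.

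Hence, if $\Path(t)\in B$ with $t\in[s,s+10^{-k}]$, then $B$ lies at bounded lattice distance, say at most $1$ or $2$ box steps, from the skeleton vertex $\Path_k(s)$ or from $\Path_k(s+10^{-k})$. The key counting step is that the vertices of $\Path_k$ near a given box are few and consecutive in time. For $k\ge1$, two nonconsecutive edges of $\Path_k$ come from refinements of distinct edges of $\Path_{k-1}$, or of the same parent edge. Within one refinement of a tube edge (Lemma \ref{l:tube}), the path is essentially straight: one transverse step, a run of steps along $\ee_i$, and one or two transverse steps. So it meets a fixed $8^{-k}$-box neighbourhood in only $O(1)$ consecutive vertices. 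Refinements of distinct parent edges are separated by distance $\gtrsim 8^{-(k-1)}-4\cdot 8^{-k}$ (Definition \ref{d:paths}\ref{refine3}), except near a shared parent vertex, where the two refinements are consecutive in time. For $k=0$ the same role is played by the monotonicity of $\Path_0$: a coordinatewise increasing path visits each box neighbourhood during a time window of bounded length.

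Combining these facts, the times with $\Path(t)\in B$ lie in a window $[s_0,s_0+m10^{-k}]$ with $m$ small, and the proposition follows once the geometry gives $m+1\le5$. The main obstacle is making this last count sharp enough to get exactly $5$ rather than some larger constant. That requires care with the half-open boxes and with the displacement of refined endpoints into the neighbouring box, as in Definition \ref{def:refinements}. I would handle this by first proving the containment claim at the scale-$k$ level, namely that $\Path$ restricted to $[s,s+10^{-k}]$ lies in $(\Path_k(s)+\kbox{k})\cup(\Path_k(s+10^{-k})+\kbox{k})$, by induction on the refinement depth. I would then check case by case, using the explicit paths \eqref{e:case1}--\eqref{e:Txycase2}, how many consecutive skeleton vertices can have $B$ as their own box or an adjacent one.
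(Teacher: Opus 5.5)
Your overall strategy is the right one: confine the times at which $\Path$ lies in $B$ to a short window, using the $k$-skeleton and the exact time parametrization of Definition \ref{r:time-parametrization}. The containment you plan to prove by induction, that $\Path$ restricted to $[s,s+10^{-k}]$ lies in $(\Path_k(s)+\kbox{k})\cup(\Path_k(s+10^{-k})+\kbox{k})$, is exactly Lemma \ref{l:DGZ-2.12}\ref{212_2}. There is in fact no spillover: the tube paths of Lemma \ref{l:tube} end at $y$ plus a \emph{positive} unit vector, and finer boxes anchored at lattice points of $B_x\cup B_y$ stay inside $B_x\cup B_y$.

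However, the proposal stops exactly where a proof is needed. The bound on the window is deferred to an unexecuted ``case by case'' count of consecutive skeleton vertices whose box is $B$ ``or an adjacent one.'' That count rests only on heuristics about near-straightness of tube paths and separation of refinements of distinct parent edges. As framed, it is not obviously at most five. With your radius of ``one or two box steps,'' a straight run $x-2\cdot 8^{-k}\ee_1,\dots,x+2\cdot 8^{-k}\ee_1$ in $\Path_k$ (such runs occur inside tube paths) gives a window of length $6\cdot 10^{-k}$. That window admits up to seven values of $t$. So your stated window of length below $3\cdot 10^{-k}$ is true, but nothing in the proposal establishes it.

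The missing observation makes all of this unnecessary. The half-open boxes $v+\kbox{k}$, $v\in 8^{-k}\Z^d$, partition $\R^d$, and $B=x+\kbox{k}$ contains the single lattice point $x$. So if $\Path(r)\in B$ with $r\in[s,s+10^{-k}]$ and $s\in 10^{-k}\Z$, the containment forces $B$ to \emph{be} the box of $\Path_k(s)$ or of $\Path_k(s+10^{-k})$. That is, $x$ is an endpoint of that skeleton edge. Since $\Path_k$ is self-avoiding, it visits $x$ at a single time $t_x\in 10^{-k}\Z$. Hence every visit time of $\Path$ to $B$ lies in $[t_x-10^{-k},t_x+10^{-k}]$, and only $t\in\{t_x-10^{-k},t_x,t_x+10^{-k}\}$ are possible. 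Nothing about parent edges, tube shapes, or monotonicity of $\Path_0$ is needed beyond self-avoidance of $\Path_k$.

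The paper runs the same argument slightly more loosely, in three steps:
\begin{enumerate}
\item Lemma \ref{l:time-between-entry-exit} uses self-avoidance and $B\cap 8^{-k}\Z^d=\{x\}$ to get $t_k^+-t_k^-\le 2\cdot 10^{-k}$.
\item Lemma \ref{l:box-times-controlled-by-top-scale} is the partition argument above and gives $[t_n^-,t_n^+]\subset[t_k^--10^{-k},t_k^++10^{-k}]$.
\item A window of length $4\cdot 10^{-k}$ admits at most five values of $t$.
\end{enumerate}
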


This indeed implies \cref{p:boxcount-final}:

\begin{proof}[Proof of Proposition \ref{p:boxcount-final}]
    Fix paths $\Path,Q\in\Paths{n}$, and consider the map
    \[
        \phi : \BBlift_k(\Pathlift) \cap \BBlift_k(\wt{Q}) \longrightarrow \BB_k(\Path) \cap \BB_k(Q)
        \qquad\text{given by}\qquad
        B\times [t,t+10^{-k}) \mapsto B.
    \]
    By \cref{p:boxcount}, the preimage of any box in $\BB_k(\Path) \cap \BB_k(Q)$ contains at most five boxes in $\BBlift_k(\Pathlift) \cap \BBlift_k(\wt{Q})$.
    Therefore, 
    \begin{align*}
        |\BBlift_k(\Pathlift) \cap \BBlift_k(\wt{Q})|
        &= \sum_{B\in \BB_k(\Path)\cap\BB_k(Q)} |\phi^{-1}(B)|
        \le 5|\BB_k(\Path)\cap\BB_k(Q)|
    \end{align*}
    as desired.
\end{proof}

We turn now to proving Proposition \ref{p:boxcount}.
Let us first motivate the argument. In essence, Proposition \ref{p:boxcount} says that the paths in $\Paths{n}$ do not \emph{backtrack} much at any scale.
That is, for $\Path \in \Paths{n}$ and $B\in\BB_k(\Path)$,
after $\Path$ visits $B$ for the first time, any subsequent visits to $B$ must be completed within an ``on-scale'' time horizon of length $O(10^{-k})$.
The proof amounts to making this last statement precise.

We now state three technical preparatory lemmas before proving \cref{p:boxcount}.
The first statement quantifies that $P_\ell$ is a \emph{local} refinement of $P_k$ for any $\ell > k$.

\begin{lemma}[{\cite[Lemma 2.12, Claim 3]{DGZPercolationThickPoints2026}}]\label{l:DGZ-2.12}
    Fix $k\ge 0$ and a path $\Path_k\in\Paths{k}$.
    Also fix $\ell>k$ and a refinement $\Path_\ell\in\Paths{\ell}$ of $\Path_k$ at scale $\ell$.
    Then the following hold.
    \begin{enumerate}[label=(\alph*)]
        \item\label{212_1}
        We have $\Path_\ell \subset \Path_k + \kbox{k}$.
        Moreover, for all vertices $x\in 8^{-k}\Z^d\cap \Path_k$, we have $\Path_\ell \cap (x+\kbox{k})\ne \varnothing$.
        \item\label{212_3} 
        We have $\BB_{k}(\Path_\ell)=\BB_{k}(\Path_{k})$.
        \item\label{212_2} 
        For any edge $(x,y)=(\Path_k(t), \Path_k(t+10^{-k}))$ in $\Path_k$, we have
        \begin{align*}
            \{\Path_\ell(s) : s\in[t,t+10^{-k}]\} \subset (x+\kbox{k}) \cup (y+\kbox{k}).
        \end{align*}
    \end{enumerate}   
\end{lemma}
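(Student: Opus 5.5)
We prove the three parts by induction on $\ell-k$, using only the explicit structure of refinements from Lemma \ref{l:tube} and Definition \ref{d:paths}. The basic input is a one-step statement. Let $(x,y)$ be an edge of $8^{-k}\Z^d$, and let $R$ be a refinement of $(x,y)$ in $8^{-(k+1)}\Z^d$. By Lemma \ref{l:tube}\ref{p4}, $R$ is contained in $(x+\kbox{k})\cup(y+\kbox{k})$. Moreover, $R$ starts in $x+\kbox{k}$ and ends in $y+\kbox{k}$. The scaled version of \ref{p4} gives the half-open containment in $B_x\cup B_y$. We also need the endpoint information: refinements start at $x$ or at $x+8^{-(k+1)}\ee_j$, and end at $y+8^{-(k+1)}\ee_j$. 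Since $j$ ranges over positive directions only, these points do lie in the half-open boxes.

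\textbf{Part \ref{212_2}.} Take an edge $(x,y)=(\Path_k(t),\Path_k(t+10^{-k}))$. By Definition \ref{r:time-parametrization}, the portion $\Path_\ell|_{[t,t+10^{-k}]}$ consists of iterated refinements of this single edge. We claim the stronger statement that an iterated refinement of $(x,y)$ at any scale $\ell>k$ stays in $(x+\kbox{k})\cup(y+\kbox{k})$.

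For the induction step, the scale-$(k+1)$ refinement consists of $10$ edges $(x',y')$ of $8^{-(k+1)}\Z^d$. Each later refinement of an edge $(x',y')$ stays in $(x'+\kbox{k+1})\cup(y'+\kbox{k+1})$. The vertices $x',y'$ lie in $(x+\kbox{k})\cup(y+\kbox{k})$, and both sets are $8^{-(k+1)}$-adapted unions of boxes. Hence each small box $x'+\kbox{k+1}$ lies inside the big box containing $x'$. This is the nesting of the $8$-adic half-open partitions, and it closes the induction.

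\textbf{Part \ref{212_1}.} The first claim follows by taking the union of Part \ref{212_2} over all edges of $\Path_k$. For the second claim, fix a vertex $x$ of $\Path_k$. The refinement of any edge incident to $x$ has an endpoint in $x+\kbox{k}$. By induction, the iterated refinement of an edge still has an endpoint in $x+\kbox{k}$: the endpoint at scale $k+1$ lies in a small box nested in $x+\kbox{k}$, and its own later refinements hit that small box. Therefore $\Path_\ell$ meets $x+\kbox{k}$.

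\textbf{Part \ref{212_3}.} This follows from Part \ref{212_1}. The continuous path $\Path_k$ meets exactly the boxes $x+\kbox{k}$ for its vertices $x$. Its edges run in positive or negative directions, but half-open boxes mean each edge lies in the union of the two endpoint boxes. Part \ref{212_1} then gives both inclusions.

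The main obstacle is the boundary bookkeeping with half-open boxes. A negative-direction edge from $x$ to $x-8^{-k}\ee_i$ passes along the boundary. One must check that its open segment lies in $y+\kbox{k}$, not in $x+\kbox{k}$, and that the refinement endpoints of Lemma \ref{l:tube} are offset by $+\ee_j$ and so land inside the correct half-open box. I would verify this directly from the cases \eqref{e:case1}--\eqref{e:Txycase2}.
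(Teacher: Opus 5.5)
Your argument is correct. For parts \ref{212_3} and \ref{212_2} it is essentially the paper's argument. The paper also deduces \ref{212_3} from \ref{212_1}. It proves \ref{212_2} by ``Lemma~\ref{l:tube}\ref{p4} and induction'', prefaced by a distance estimate via \ref{p1} that does not really enter. Your nesting observation (a vertex $x'\in 8^{-(k+1)}\Z^d$ lying in $x+\kbox{k}$ satisfies $x'+\kbox{k+1}\subset x+\kbox{k}$) is exactly what makes that induction close.

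The genuine difference is part \ref{212_1}. The paper simply imports it from \cite[Lemma 2.12, Claim 3]{DGZPercolationThickPoints2026}. You instead derive the containment from \ref{212_2}, and the non-emptiness from the fact that every refinement starts and ends at positive unit offsets of the edge's endpoints. Your order of deduction is therefore \ref{212_2} $\Rightarrow$ \ref{212_1} $\Rightarrow$ \ref{212_3}. This makes the lemma self-contained, at the cost of the half-open boundary check you correctly flag.

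Two small remarks. First, refinements never start at $x$ itself, only at $x+8^{-(k+1)}\ee_j$; this slip is harmless. Second, the cleanest base case for your induction is $\ell=k$. There the ``iterated refinement'' is the edge $[x,y]$ itself, which lies in $(x+\kbox{k})\cup(y+\kbox{k})$ by half-openness. With that base case, the inductive step needs only the vertex-level containment of Lemma~\ref{l:tube}\ref{p4}, not a continuous-path version of it.
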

\begin{proof}

    Claim \ref{212_1} is exactly \cite[Lemma 2.12, Claim 3]{DGZPercolationThickPoints2026}.
    Recalling the definition of $\cB_k$ from \eqref{boxpoint},
    Claim \ref{212_3} follows immediately from Claim \ref{212_1}, and actually appears below \cite[(2.9)]{DGZPercolationThickPoints2026}.

{Let us prove Claim \ref{212_2}. By Definition \ref{r:time-parametrization}, the set $\{\Path_\ell(s) : s\in[t,t+10^{-k}]\}$ is exactly the refinement at scale $\ell$ of the edge $(x,y)$.
    By Lemma \ref{l:tube}\ref{p1} and induction, any such refinement lies within distance $2\sum_{j=k+1}^\ell 8^{-j} \le 
    \frac{2}{7}\cdot 8^{-k}$ of the edge $(x,y)$.
    The claim then follows from Lemma \ref{l:tube}\ref{p4} and induction.}
\end{proof}

The next lemma upper bounds the time that a path $\Path_k\in\Paths{k}$ can spend in a given box $B\in\BB_k$.
The proof is just brute force casework using only that $\Path_k$ is a self-avoiding lattice path on $8^{-k}\Z^d$.

\begin{lemma}\label{l:time-between-entry-exit}
    Fix $k\ge 0$, any path $\Path_k\in\Paths{k}$, and any box $B\in\BB_{k}(\Path_k)$.
    Let
    \[
        t_k^- := \inf\{t\in[0,M-1]:\Path_k(t)\in B\},
        \qquad t_k^+ := \sup\{t\in[0,M-1]:\Path_k(t)\in B\}.
    \]
    Then we have $t^+_k - t^-_k  \le 2\cdot 10^{-k}$.
\end{lemma}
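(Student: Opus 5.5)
The plan is to reduce everything to the lattice structure of $8^{-k}\Z^d$ and the half-open shape of $B$. Write $B = x + \kbox{k}$ with $x\in 8^{-k}\Z^d$, and view $\Path_k$ as the continuous path of \cref{r:time-parametrization}. Each point $\Path_k(t)$ is either a lattice vertex (when $t\in 10^{-k}\Z$) or lies in the interior of a single edge. I would first classify which vertices and which edges can meet $B$, then use self-avoidance to pin down the set of times $\{t : \Path_k(t)\in B\}$.

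\textbf{Step 1 (vertices in $B$).} Since $B$ is a half-open cube of side $8^{-k}$ with corner in $8^{-k}\Z^d$, the only lattice point of $8^{-k}\Z^d$ in $B$ is $x$ itself.

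\textbf{Step 2 (edges meeting $B$).} Consider an edge between $y$ and $y+8^{-k}\ee_i$. An interior point of this edge has all coordinates other than the $i$-th in $8^{-k}\Z$. For such a point to lie in $B$, these coordinates must equal those of $x$, and the $i$-th coordinate must lie in $[x_i, x_i+8^{-k})$. Together with Step 1, this shows that the only edges meeting $B$ are:
\begin{itemize}
\item edges incident to $x$, which meet $B$ at the vertex $x$;
\item the edges $[x, x+8^{-k}\ee_i]$, which meet $B$ exactly in the half-open segment $[x, x+8^{-k}\ee_i)$.
\end{itemize}
In particular, since $B\in\BB_k(\Path_k)$, the path must visit the vertex $x$, say at a time $s\in 10^{-k}\Z$.

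\textbf{Step 3 (self-avoidance).} Because $\Path_k$ is self-avoiding, $x$ is visited exactly once, at time $s$. The times in $B$ are therefore only:
\begin{itemize}
\item the time $s$ itself;
\item possibly the interval $(s, s+10^{-k})$, if the edge traversed after time $s$ goes in a positive coordinate direction.
\end{itemize}
The edge arriving at $x$ touches $B$ only at its endpoint, so it contributes only the time $s$. Hence $t_k^- = s$ and $t_k^+ \le s + 10^{-k}$, which gives
\[
t_k^+ - t_k^- \le 10^{-k} \le 2\cdot 10^{-k}.
\]

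The only delicate point is the boundary bookkeeping in Step 2: correctly using that $B$ is half-open, so that edges from $x - 8^{-k}\ee_i$ into $x$, and edges lying on the far faces of the cube, meet $B$ at most at the vertex $x$ or not at all. Once that classification is done, the bound follows directly from self-avoidance.
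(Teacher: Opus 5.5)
Your Step 3 contains a false claim: ``the edge arriving at $x$ touches $B$ only at its endpoint.'' That is true only when the path arrives at $x$ from $x-8^{-k}\ee_j$. Paths in $\Paths{k}$ also take negative steps; for instance, the refinements in Lemma~\ref{l:tube} end with a $-\ee_{2\ell}$- or $-z$-step. So the path may instead arrive at $x$ from $x+8^{-k}\ee_j$, i.e.\ along one of the edges $[x,x+8^{-k}\ee_j]$. Your own Step 2 shows that such an edge meets $B$ in the half-open segment $[x,x+8^{-k}\ee_j)$. Step 2 does not depend on the direction of traversal, but Step 3 tacitly assumes these edges are only traversed \emph{outward} from $x$.

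If the path enters $x$ this way, it lies in $B$ for all $t\in(s-10^{-k},s]$, so $t_k^-=s-10^{-k}$ rather than $s$. If it then leaves $x$ along $+\ee_i$ (with $i\neq j$ by self-avoidance), it lies in $B$ for all $t\in(s-10^{-k},s+10^{-k})$, and so $t_k^+-t_k^-=2\cdot 10^{-k}$. This configuration genuinely occurs. The (rescaled) refinement \eqref{e:Txycase1} of the first edge of a path in $\Paths{0}$ ends with a $-\ee_{2\ell}$-step into its endpoint $w$, and every refinement in \eqref{e:case1}--\eqref{e:case4} starts with a positive step. Hence the scale-$1$ box with corner $w$ has $t_1^+-t_1^-=2\cdot 10^{-1}$. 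Your intermediate bound $t_k^+-t_k^-\le 10^{-k}$ is therefore false, and the constant $2$ in the lemma is sharp.

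The repair is short and is exactly the paper's argument. Since $x$ is the only lattice point in $B$ and is visited exactly once, at time $s$, every time spent in $B$ lies on the edge entering $x$ or the edge leaving $x$. These occupy the time intervals $[s-10^{-k},s]$ and $[s,s+10^{-k}]$. So $\{t:\Path_k(t)\in B\}\subset(s-10^{-k},s+10^{-k})$, giving $t_k^+-t_k^-\le 2\cdot 10^{-k}$. This corresponds to the paper's case split according to whether $\Path_k$ traverses zero, one, or two edges of $B$, where the last case forces the two edges to be traversed consecutively through $x$.
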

\begin{proof}
    We may write $B=x+\kbox{k}$ for some $x\in 8^{-k}\Z^d$.
    In particular, $8^{-k}\Z^d\cap B=\{x\}$.
    Thus, since $\Path_k$ is a self-avoiding lattice path on $8^{-k}\Z^d$, which intersects $B$ by hypothesis, one of the following three cases (depicted in Figure \ref{f:entry-exit}) must occur:
    \begin{enumerate}[label={\rm (\arabic*)}]
        \item\label{case1} $\Path_k\cap B = \{x\}$, or
        \item\label{case2} $\Path_k\cap B$ is a single edge of $B$, or
        \item\label{case3} $\Path_k\cap B$ is the union of two edges of $B$.
    \end{enumerate}

    In Case \ref{case1} (Figure \ref{f:entry-exit}, top right) we have $t^+_k = t^-_k$.
    In Case \ref{case2} (Figure \ref{f:entry-exit}, bottom left) we have $t^+_k = t^-_k + 10^{-k}$, because $\Path_k$ spends time $10^{-k}$ traversing each of its edges 
    by \cref{r:time-parametrization}.

    Finally, suppose Case \ref{case3} occurs (Figure \ref{f:entry-exit}, bottom right).
    Since $\Path_k$ is self-avoiding, there is a unique time $t\in[0,M-1]$ with $\Path_k(t)=x$.
    But \emph{every} edge of $B$ contains the point $x$, so $\Path_k$ must traverse the two edges in $\Path_k\cap B$ in consecutive order.
    It follows that $t^+_k = t^-_k + 2\cdot 10^{-k}$, since $\Path_k$ spends time $10^{-k}$ on each edge.

    In all three cases we have $t^+_k - t^-_k \le 2\cdot 10^{-k}$ as claimed.
\end{proof}

\begin{figure}[tbhp]
    \centering

    \includegraphics[width=\textwidth]{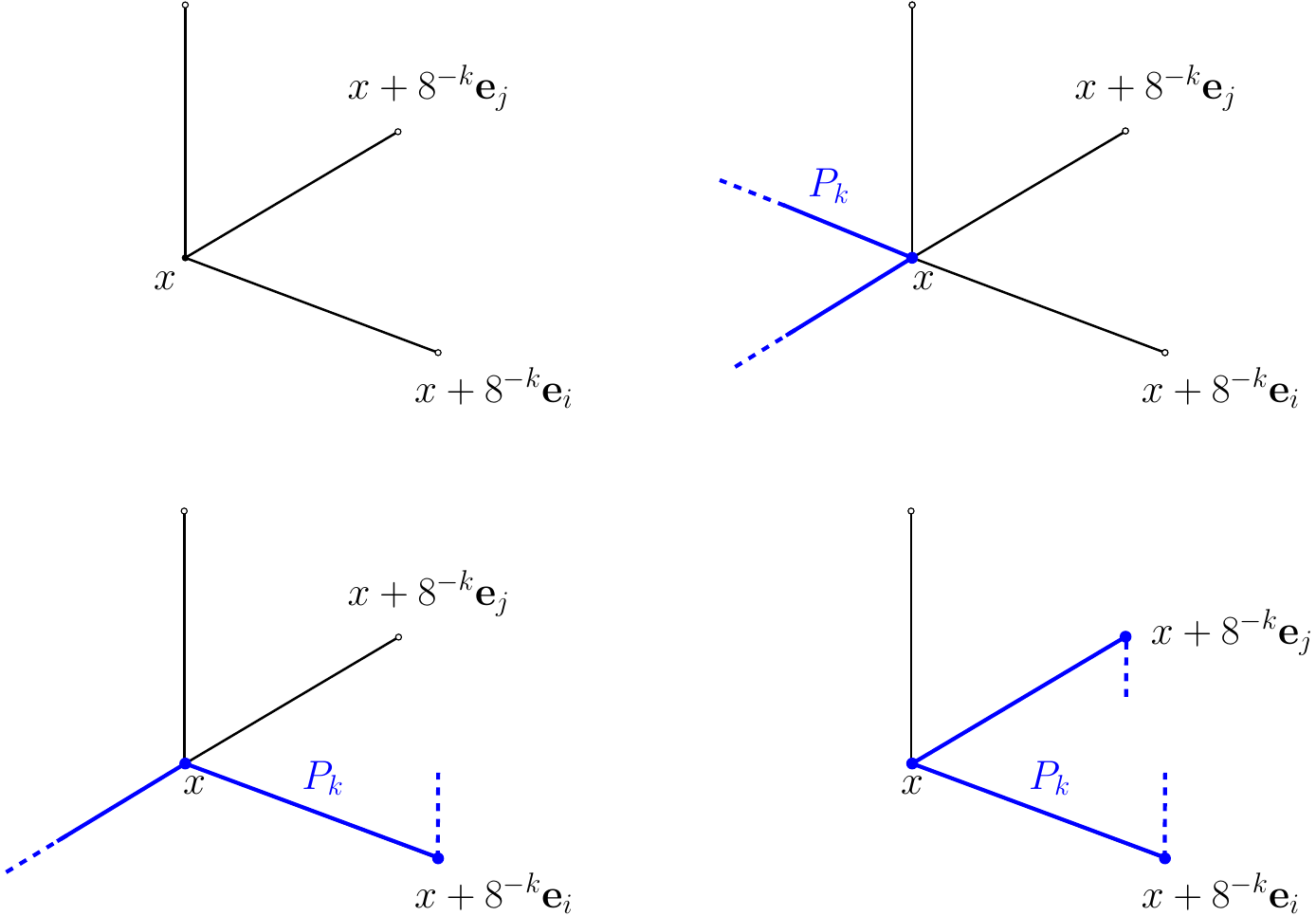}

    \caption{The proof of Lemma \ref{l:time-between-entry-exit}.
    A self-avoiding path $\Path_k$ in $8^{-k}\Z^d$ intersects a box $B=x+[0,8^{-k})^d$ for some $x\in 8^{-k}\Z^d$.
    Since $B\cap 8^{-k}\Z^d = \{x\}$, self-avoidance implies that $\Path_k$, extended to a continuous path in $\R^d$, traverses either zero, one, or two edges of $B$.
    \\
    \textbf{Top left:}
    The vertex $x\in 8^{-k}\Z^d$, and some edges of the box $B = x + [0,8^{-k})^d$. The edges of $B$ are exactly those line segments with endpoints $x$ and $x + 8^{-k}\ee_i$ for $i\in\lb 1,d\rb$.
    Note that the endpoints $x+8^{-k}\ee_i$ do not belong to $B$.
    \\
    \textbf{Top right:}
    Case \ref{case1}.
    The path $\Path_k$ (blue) exits $B$ immediately after entering, so that $\Path_k\cap B = \{x\}$.
    \\
    \textbf{Bottom left:}
    Case \ref{case2}.
    The path $\Path_k$ traverses a single edge in $B$ before exiting (depicted here as $(x, x+8^{-k}\ee_i)$).
    \\
    \textbf{Bottom right:}
    Case \ref{case3}.
    The path $\Path_k$ traverses two edges in $B$ before exiting (depicted here as $(x, x+8^{-k}\ee_i)$ and $(x, x+ 8^{-k}\ee_j)$).
    }
    \label{f:entry-exit}
\end{figure}

Recall that to prove \cref{p:boxcount}, we must show that once $\Path\in\Paths{n}$ visits $B\in\BB_k$ for the first time, after a time interval of length $O(10^{-k})$, it exits $B$ for the last time.
While this was established for the $k$-skeleton of $\Path$ in the above \cref{l:time-between-entry-exit},  
this alone does not imply the same for $\Path$ itself: in principle, iteratively refining the $k$-skeleton might create a path that returns to $B$ long after the $k$-skeleton exits $B$ for the last time.
This possibility is ruled out by the following lemma which is itself a consequence of Lemma \ref{l:DGZ-2.12}, which proves that refinements are local.

\begin{lemma}[Comparing box entrance/exit times across scales]\label{l:box-times-controlled-by-top-scale}
    Fix $k\ge 0$, any path $\Path_k\in\Paths{k}$, and any box $B\in\BB_{k}(\Path_k)$.
    Fix also any sequence of refinements $\{\Path_\ell\}_{\ell > k}$ of $\Path_k$.
    For $\ell\ge k$, let
    \[
        t_\ell^- := \inf\{t\in[0,M-1]:\Path_\ell(t)\in B\},
        \qquad t_\ell^+ := \sup\{t\in[0,M-1]:\Path_\ell(t)\in B\}.
    \]
    Then for all $\ell>k$, we have
    \[
        [t^-_\ell, \; t^+_\ell]
        \subset
        [t_k^- - 10^{-k}, \;\; t_k^+ + 10^{-k}].
    \]
\end{lemma}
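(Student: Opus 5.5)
The plan is to read off the claim from Lemma~\ref{l:DGZ-2.12}\ref{212_2} and the time-locality in Definition~\ref{r:time-parametrization}. We show that for $s$ outside $[t_k^- - 10^{-k},\, t_k^+ + 10^{-k}]$, $\Path_\ell(s)\notin B$. First note that $t_k^\pm$ lie in $10^{-k}\Z$ up to an edge: by the case analysis of Lemma~\ref{l:time-between-entry-exit}, $\Path_k$ first touches $B$ either at the vertex $x$ (writing $B=x+\kbox{k}$), at a time in $10^{-k}\Z$, or along an edge $(x,x+8^{-k}\ee_i)$ entered from the endpoint outside $B$. Since such an edge is open at $x+8^{-k}\ee_i$, the entry time is still the vertex time of $x$ when approaching from outside. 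In every case $t_k^-=\tau$ with $\Path_k(\tau)=x$. Moreover $t_k^+$ is the time at which $\Path_k$ reaches the far endpoint of the last edge in $B$ (a supremum, not attained), so $t_k^+\in 10^{-k}\Z$ as well.

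Next, fix $s\in[0,M-1]$ with $s< t_k^- - 10^{-k}$ (the case $s>t_k^+ + 10^{-k}$ is symmetric). Choose $t\in 10^{-k}\Z$ with $s\in[t,t+10^{-k}]$. Then $t+10^{-k}\le t_k^-$ forces $t+10^{-k}<t_k^-$ unless $s$ equals $t_k^- -10^{-k}$ exactly, which is excluded by strictness. Hence the edge $(y,y')=(\Path_k(t),\Path_k(t+10^{-k}))$ has both endpoints at times strictly before $t_k^-$, so $y,y'\notin B$. Since $y,y'\in 8^{-k}\Z^d$ and $B\cap 8^{-k}\Z^d=\{x\}$, the boxes $y+\kbox{k}$ and $y'+\kbox{k}$ are different from $B$. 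They are therefore disjoint from $B$, because boxes of $\BB_k$ partition $\R^d$. By Lemma~\ref{l:DGZ-2.12}\ref{212_2}, $\Path_\ell(s)\in(y+\kbox{k})\cup(y'+\kbox{k})$, so $\Path_\ell(s)\notin B$. This gives $t_\ell^-\ge t_k^- - 10^{-k}$ and symmetrically $t_\ell^+\le t_k^+ + 10^{-k}$. The sup/inf over the closed complement only matters on boundaries, and we absorb this since the interval in the claim is closed.

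The main obstacle is the bookkeeping at the endpoints. The points are: whether $t_k^\pm$ are multiples of $10^{-k}$; the fact that the vertex times before $t_k^-$ and after $t_k^+$ correspond to vertices outside $B$; and self-avoidance of $\Path_k$, which ensures that vertices visited after $t_k^+$ are not $x$ (otherwise $\Path_k$ would revisit $B$). Once each $s$ outside the enlarged window lies in an edge of $\Path_k$ whose two endpoint boxes avoid $B$, the lemma follows immediately from locality of refinements. A clean fallback avoids locating $t_k^\pm$ exactly. Let $\tau$ be the unique time with $\Path_k(\tau)=x$, which exists by Case analysis since any edge meeting $B$ contains $x$, and $B\in\BB_k(\Path_k)$. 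Then Lemma~\ref{l:time-between-entry-exit} gives $[t_k^-,t_k^+]\supseteq\{\tau\}$. Any edge of $\Path_k$ not incident to $x$ has both endpoint boxes different from $B$. The edges incident to $x$ occupy times within $[\tau-10^{-k},\tau+10^{-k}]$, which is contained in $[t_k^- -10^{-k},t_k^+ +10^{-k}]$. So we conclude the same way.
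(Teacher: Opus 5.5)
Your argument is correct and is the paper's proof run in contrapositive form. The paper takes $s$ with $\Path_\ell(s)\in B$, applies Lemma~\ref{l:DGZ-2.12}\ref{212_2} to the $k$-edge covering time $s$, and deduces that an endpoint of that edge lies in $B$, so its time lies in $[t_k^-,t_k^+]$. You instead show that for $s$ outside the enlarged window both endpoint times lie strictly outside $[t_k^-,t_k^+]$, so both endpoint boxes are disjoint from $B$.

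Your preliminary claims locating $t_k^\pm$ are not right in every case. For example, if $\Path_k$ reaches $x$ from $x+8^{-k}\ee_i$, it is already in $B$ on $(\tau-10^{-k},\tau]$, so $t_k^-=\tau-10^{-k}$ rather than $\tau$. Nothing in your argument uses these claims, though. Your fallback is also valid and in fact gives the sharper window $[\tau-10^{-k},\tau+10^{-k}]$.
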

\begin{proof}

It is equivalent to show that for all $s\in[0,M-1]$ and all $\ell>k$, 
we have the implication
\[
    \Path_\ell(s)\in B \qquad\implies\qquad s\in [t^-_k - 10^{-k},\;\; t^+_k+10^{-k}].
\]
Fix $s\in[0,M-1]$ and $\ell>k$, and assume that $\Path_\ell(s)\in B$.
Let $s_*$ be the unique time in $10^{-k}\Z$ satisfying $s\in [s_*,s_*+10^{-k})$, and write $x:=\Path_k(s_*)$ and $y:=\Path_k(s_*+10^{-k})$.
Then the point $\Path_\ell(s)$ lies on a refinement of the edge $(x,y)$ at scale $\ell$.
It follows by \cref{l:DGZ-2.12}\ref{212_2} that during the time interval $[s_*,\;s_*+10^{-k}]$, the refinement $\Path_\ell$ is contained in $(x+\kbox{k})\cup (y+\kbox{k})$.
But since we assumed that $\Path_\ell(s)\in B$, we must have either $B=x+\kbox{k}$ or $B=y+\kbox{k}$, and in particular either $x\in B$ or $y\in B$.
Now, if $x\in B$ then $s_* \in [t^-_k,\; t^+_k]$, and it follows that $[s_*,\; s_*+10^{-k}] \subset [t^-_k,\; t^+_k + 10^{-k}]$.
Similarly, if $y\in B$ then $s_*+10^{-k} \in [t^-_k,\; t^+_k]$, and it follows that $[s_*,\; s_*+10^{-k}]\subset [t^-_k - 10^{-k},\; t^+_k]$.
In either case we have $s\in [t^-_k - 10^{-k},\;\; t^+_k + 10^{-k}]$ as desired.
\end{proof}

We now prove \cref{p:boxcount}.

\begin{proof}[Proof of \cref{p:boxcount}]
    We analyze the cases $k<n$ and $k=n$ separately.
    
    Suppose $k<n$.
    Fix $B\in\BB_{k}(\Path) = \BB_{k}(\Path_k)$, where the equality is by \cref{l:DGZ-2.12}\ref{212_3}.
    For $\ell\in\{k,n\}$, let $t^-_\ell, t^+_\ell$ be defined as in \cref{l:box-times-controlled-by-top-scale} (they are the first and last time the $\ell$-skeleton is inside $B$).
    Applying \cref{l:box-times-controlled-by-top-scale} followed by \cref{l:time-between-entry-exit} yields
    \begin{align*}
        [t_n^-,\; t_n^+]
        &\subset [t^-_{k} - 10^{-k}, \;\; t^+_{k} + 10^{-k}]\\
        &\subset [t^-_{k} - 10^{-k}, \;\; t^-_{k} + 3\cdot 10^{-k}].
    \end{align*}
    Now, suppose that
    \begin{align}\label{e:five-boxes-1}
        B\times [t,t+10^{-k}) \in \BBlift_{k}(\Pathlift)\qquad\text{for some } t\in 10^{-k}\Z.
    \end{align}
    Then $t\in [t^-_n,\; t^+_n]$ by definition of $t^-_n,t^+_n$, and hence by the preceding display we have
    \[
        t\in[t^-_{k} - 10^{-k},\;\; t^-_{k} + 3\cdot 10^{-k}].
    \]
    Since $t\in 10^{-k}\Z$ by assumption, and since any interval of length $4\cdot 10^{-k}$ contains at most five points in $10^{-k}\Z$, we deduce that there exist at most five boxes of the form \eqref{e:five-boxes-1}, as desired.

    The case $k=n$ follows by a simpler version of the above argument.
    By \cref{l:time-between-entry-exit} we have
    \[
        [t^-_n,\; t^+_n] \subset 
        [t^-_n,\;\; t^-_n+ 2\cdot 10^{-n}].
    \]
    Since the interval on the right-hand side has length $2\cdot 10^{-n}$, there exist at most \emph{three} boxes of the form $B\times [t,t+10^{-n})\in \BBlift_n(\Pathlift)$ with $t\in 10^{-n}\Z$.
    This finishes the proof of \cref{p:boxcount}. 
\end{proof}

\appendix

\section{Proof of Lemma \ref{l:easycross-dep}}\label{a:easycross}
In this appendix, we prove Lemma \ref{l:easycross-dep}.
The proof follows the same strategy 
as the main result of \cite{ChaAbsenceDirectedFractal1995}.
\begin{figure}[h]
    \includegraphics[width = 0.48\textwidth]{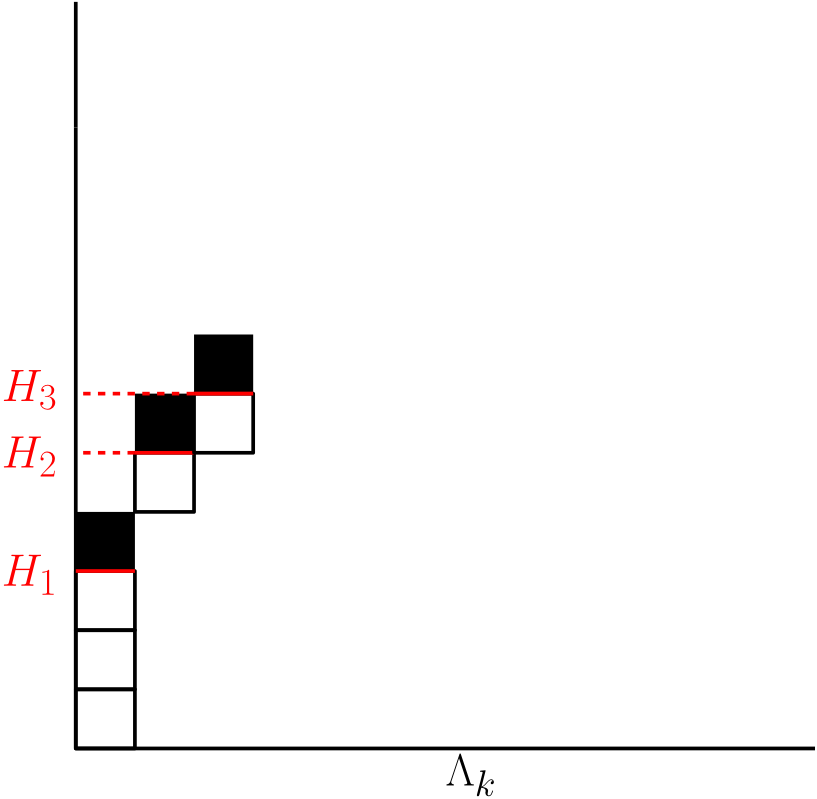}
    \includegraphics[width = 0.48\textwidth]{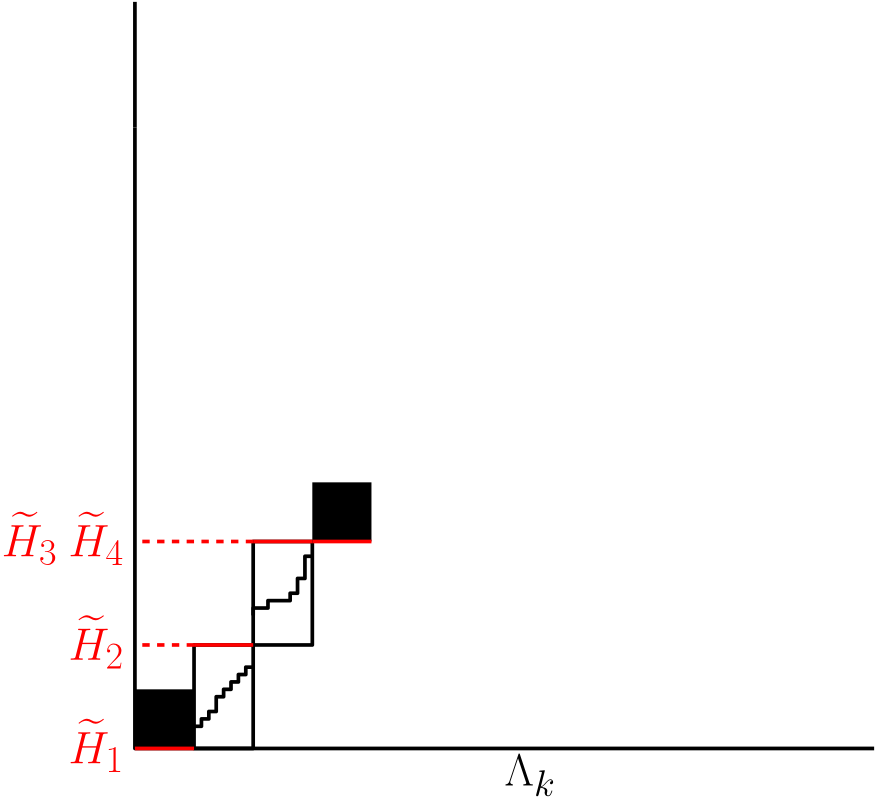}
    \includegraphics[width = 0.48\textwidth]{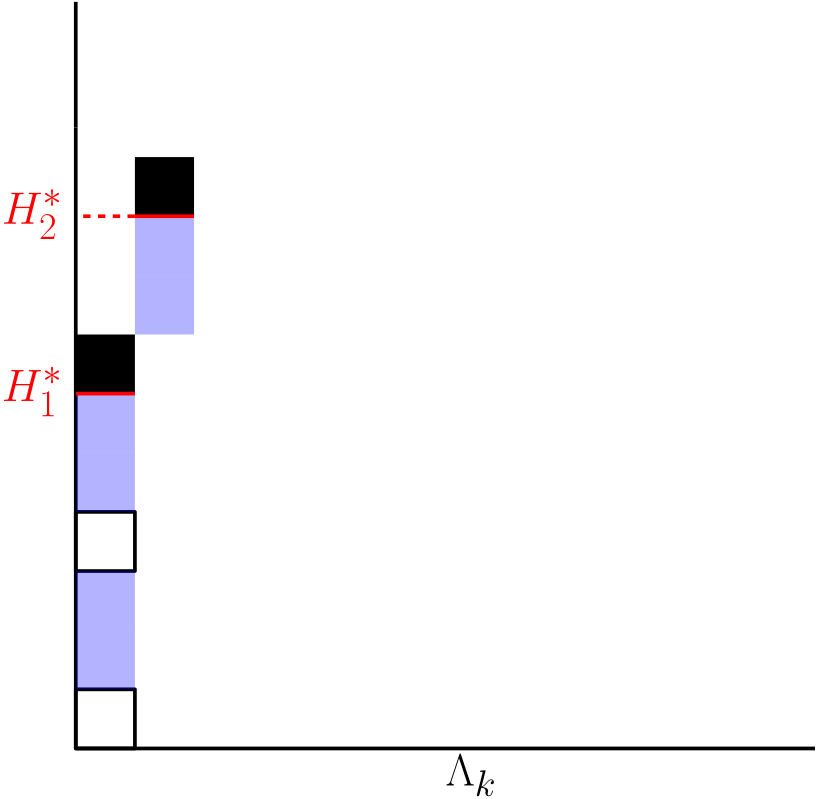}
    \includegraphics[width = 0.48\textwidth]{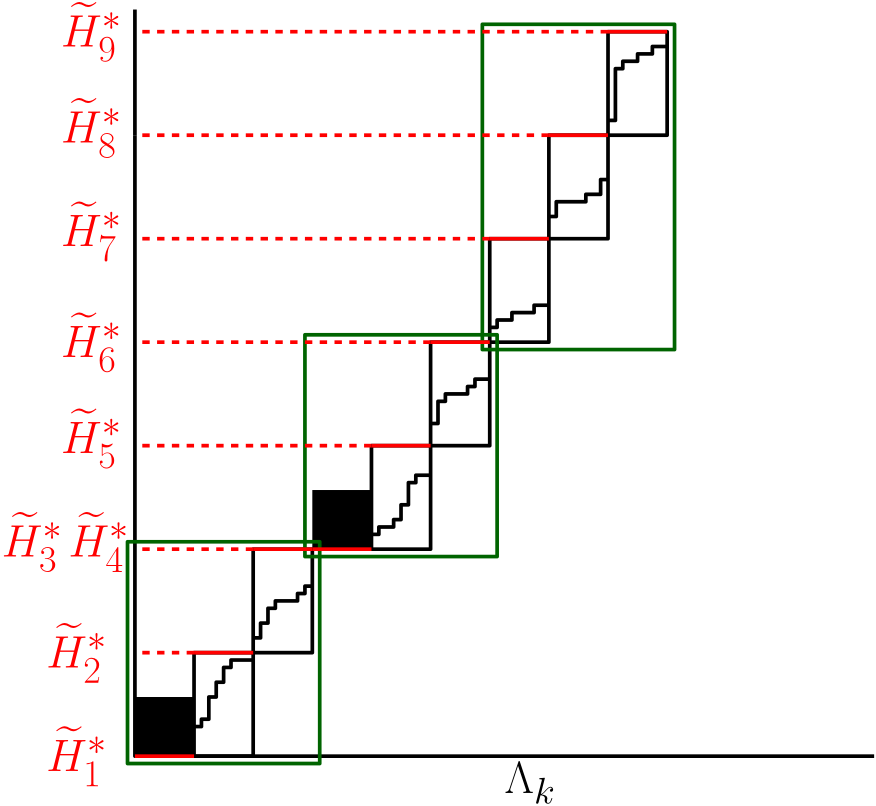}
    \caption{Illustrations of the processes $\{H_i\}$, $\{\widetilde{H}_{i}\}$, $\{H^{*}_i\}$ and $\{\widetilde{H}^{*}_i\}$ in the proof of Lemma~\ref{l:easycross-dep}.
    The main difference between $\{H_i\}$ and $\{H^{*}_i\}$ is that in the latter we skip two boxes after each test, which ensures the independence of the tests.
    Similarly, the construction of $\{\widetilde{H}_i\}$ maintains independence between tests by skipping two columns, which is achieved by inserting two contours after each test.}
    \label{f:H-processes}
\end{figure}
Therefore, we will first briefly review the argument in \cite{ChaAbsenceDirectedFractal1995}, and afterwards explain how to adapt the proof to our $1$-dependent model $\Dep(\sigma_{k})$.
In the proof, for any $k \ge 1$, $\su\in\Lambda_k$, and $\tomega_k \in \{0,1\}^{\Lambda_k}$, we say $\su$ is \emph{$\tomega_k$-open} if $\tomega_k(\su) = 1$, and \emph{$\tomega_k$-closed} if $\tomega_k(\su) = 0$.

\begin{proof}
    First, we show that the statement holds for $\sigma_k$ with some sufficiently large $k$.
    The proof begins by establishing that the probability of the existence of a directed path in $\sigma_k$ crossing the rectangle $[0,1) \times [0,L)$ vertically vanishes as $k \to \infty$, for some large constant $L > 1$.

To this end, consider the configuration $\omega_k$ restricted to the first vertical column $\cH_1$ of $\Lambda_{k}$, i.e.
\[
    \cH_1:= \left\{\su \in \Lambda_k: {\bl(\su)_{1}} = 0\right\},
\]
where $\bl(\su)=(\bl(\su)_1,\bl(\su)_2),$
and define the random variable $H_1$ to be the height of the first $\omega_k$-closed box in this column.
More precisely,
\[
    H_1 := \min\left\{h \in 2^{-k} \bZ_{\geq 0}: (0, h) + [0,2^{-k})^{2} \notin \omega_k\right\}.
\]
By definition, $2^{k} \cdot H_1$ is distributed as a Geometric random variable with success probability $1-p$; equivalently, for all $n \in \bZ_{\geq 0}$,
\[
    \mathbb{P}(H_1 = n \cdot 2^{-k}) = p^{n} (1-p).
\] 
Having defined $H_1$, we proceed to the next column $\cH_2:= \{\su \in \Lambda_k: \bl(\su)_{1} = 2^{-k}\}$, and define $H_2$ as the height of the first $\omega_k$-closed box in $\cH_2$ above $H_1$, namely
\[
    H_2 := \min\left\{h \in 2^{-k} \bZ_{\geq 0}: h \geq H_1 + 2^{-k}\quad\text{and}\quad(2^{-k}, h) + [0,2^{-k})^{2} \notin \omega_k \right\}.
\]
We iterate this process, defining $H_i$ for each $1 \leq i \leq 2^k$ recursively, and refer to the sequence of closed boxes identified in each column as a \emph{contour} (this is illustrated in Figure \ref{f:H-processes}, top left).
Observe that the increments $\{H_i - H_{i-1}\}_{i=2}^{2^k}$ are i.i.d.\ random variables,
with the same distribution as $H_1 + 2^{-k}$.
By the law of large numbers, for any 
\[
L > 2^k \mathbb{E}[H_1 + 2^{-k}] = \frac{1}{1-p},
\]
we have
\begin{align*}
    \mathbb{P}(H_{2^k} \geq L) \to 0 \quad \text{as } k \to \infty.
\end{align*}
Note that if $H_{2^k} < L$, then there cannot exist a directed path in $\omega_k$ crossing $[0,1) \times [0,L)$ vertically.
This implies that the vertical crossing probability tends to zero as $k \to \infty$, provided $L > \frac{1}{1-p}$.

One now bootstraps the previous result to show that for any $\widetilde{L} > pL$ the crossing probability of $[0,1)\times[0,\widetilde{L})$ also tends to zero. 
For simplicity we assume that $L$ is an integer. The general case follows by a similar argument with an extra rounding step  (see \cite{ChaAbsenceDirectedFractal1995} for details).
First, for all $k\ge0$ and $t\ge 1$, we define the events
\begin{align*}
    \cA_{k,k+t}(i,j) &:= \left\{
        \text{there exists a contour in $\sigma_{k,k+t}$ blocking $(i2^{-k},j2^{-k})+[0,2^{-k})\times[0,2^{-k}L)$}
    \right\},\\
    \cA_{k,k+t} &:= \bigcap_{\substack{0\le i \le 2^k-1\\ 0\le j\le 2^k-1}}\cA_{k,k+t}(i,j).
\end{align*}
By the previous result, for any fixed $k\ge 0$, we have $\P(\cA_{k,k+t})\to1$ as $t\to\infty$.
We now condition on $\cA_{k,k+t}$ and note that $\cA_{k,k+t}$ is independent of the configuration $\sigma_k$.

To estimate the conditional crossing probability we define another column process.
We start with the first column $\cH_1$ and examine the initial box $[0,2^{-k}) \times [0,2^{-k})$.
If this initial box is $\omega_k$-closed, we set $\widetilde{H}_1 := 0$ and proceed to the next column.
Otherwise, suppose the initial box is $\omega_k$-open.
Since we conditioned on $\cA_{k,k+t}$, there exists a contour in $\sigma_{k,k+t}$ that blocks vertical crossings in the rectangle $[0,2^{-k}) \times [0,2^{-k}L)$.
In this case, we define $\widetilde{H}_1 := 2^{-k}L$.
Since the event $\cA_{k,k+t}$ is independent of the configuration $\omega_k$, it follows that 
\[
    \E[\widetilde{H}_1\,|\,\cA_{k,k+t}] 
    =2^{-k}pL.
\]
We iterate this procedure to define $\widetilde{H}_i$ for each $1 \leq i \leq 2^k$.
Namely, starting from the height of the previous rectangle, we check if the next $2^{-k}\times 2^{-k}$ box is $\omega_k$-closed or $\omega_k$-open.
If it is $\omega_k$-closed then we set $\wt{H}_i = \wt{H}_{i-1}$, and if it is $\omega_k$-open we use the presence of the contour guaranteed by $\cA_{k,k+t}$ and set $\wt{H}_{i} = \wt{H}_{i-1} + 2^{-k}L$. 
This is depicted in Figure \ref{f:H-processes} (top right).

Conditional on $\cA_{k,k+t}$, the increments $\{\widetilde{H}_i - \widetilde{H}_{i-1}\}_{i=1}^{2^k}$ are i.i.d. random variables with expectation $\mathbb{E}[\widetilde{H}_i - \widetilde{H}_{i-1}\,|\,\cA_{k,k+t}] = p 2^{-k}L$, where we set $\wt{H}_0:=0$.
Consequently, for any $\widetilde{L} > pL$, we have
\begin{align*}
    \P(\widetilde{H}_{2^k} \geq \widetilde{L}\,|\,\cA_{k,k+t})
    \to 0 \quad \text{as } k \to \infty,
\end{align*}
and the convergence is uniform in $t$.
Therefore, for any $\varepsilon >0$, we can choose $k$ sufficiently large such that for all $t \ge 1$,
\[
    \P(\widetilde{H}_{2^k} \geq \widetilde{L}\,|\,\cA_{k,k+t}) \le \frac{\varepsilon}{2}.
\]
Moreover, since $\P(\cA_{k,k+t}) \to 1$ as $t \to \infty$, we can choose $t$ sufficiently large such that
\[
\P(\cA_{k,k+t}) \geq 1-\frac{\varepsilon}{2}.
\]
Combining these two estimates yields
\[
\P(\widetilde{H}_{2^k} \geq \widetilde{L}) \le \P(\cA_{k,k+t}^{c}) + \P(\widetilde{H}_{2^k} \geq \widetilde{L}\,|\,\cA_{k,k+t}) \leq \frac{\varepsilon}{2} + \frac{\varepsilon}{2} = \varepsilon.
\]
As argued in the first part, $\widetilde{H}_{2^k} \leq \widetilde{L}$ implies the existence of a contour in $\sigma_k$ blocking vertical crossings in the rectangle $[0,1) \times [0,\widetilde{L})$.
This implies that the vertical crossing probability vanishes as $k\to\infty$, for any $\widetilde{L} > pL$.
Iterating this argument yields the same conclusion for any $\widetilde{L}>p^mL$ and any $m\ge1$, and in particular for $\widetilde{L}=\frac12$.

We now explain how to adapt the previous argument to our $1$-dependent model $\Dep(\sigma_k)$.
In the first step, we examine the initial column $\cH_1$ under the percolation configuration $\Dep(\omega_k)$. 

We start by checking whether the box $[0,2^{-k})^{2}$ is $\Dep(\omega_k)$-open.
If it is, we skip the next two boxes in $\cH_1$ and proceed to check $(0, 3\cdot 2^{-k}) + [0,2^{-k})^2$, and continue moving upward in this manner---each time skipping two boxes between checked boxes---until we find one that is $\Dep(\omega_k)$-closed.
We then define $H^{*}_1$ to be the height of the bottom edge of the first $\Dep(\omega_k)$-closed box thus encountered.
Note that the tested boxes are independent due to the $1$-dependence structure of $\Dep(\sigma_k)$.

Next, we move to the adjacent column $\cH_2$ and repeat the same procedure as in the first column.
However, instead of starting from $[2^{-k},H^{*}_{1}+2^{-k}) + [0,2^{-k})^{2}$ as in the independent case, 
here we skip two boxes and start from $[2^{-k},H^{*}_{1}+3\cdot 2^{-k}) + [0,2^{-k})^{2}$.
We then sequentially check for $\Dep(\omega_k)$-closed boxes like in the first column, 
and define $H^{*}_2$ as the height of the bottom edge of the first $\Dep(\omega_k)$-closed box encountered in $\cH_2$.
Continuing inductively, we define $H^{*}_i$ for all $1 \leq i \leq 2^k$
(see Figure \ref{f:H-processes}, bottom left).

Due to the $1$-dependence structure of the percolation configuration $\Dep(\sigma_k)$, the increments $\{H^{*}_i - H^{*}_{i-1}\}_{i=2}^{2^k}$ form an i.i.d.\ sequence (this is why we started from $H_{i-1}^* + 3\cdot 2^{-k}$ instead of $H_{i-1}^* + 2^{-k}$).
Therefore, by the law of large numbers, for any $L > 2^k\, \mathbb{E}[H^{*}_2-H^{*}_1]$, we have
\begin{align*}
    \mathbb{P}(H^{*}_{2^k} \geq L) \to 0 \quad \text{as } k \to \infty.
\end{align*}

The next step is to bootstrap the parameter $L$ to a smaller value $\widetilde{L}$.
The procedure, illustrated in Figure \ref{f:H-processes} (bottom right), is again analogous to the independent case.

We start in column $\cH_1$, testing the box $[0,2^{-k})^2$.
If it is $\Dep(\omega_k)$-closed, we define $\widetilde{H}^{*}_1 = 0$.
If instead it is $\Dep(\omega_k)$-open, we further consider the configuration $\Dep(\sigma_{k, k+\ell})$ for some large $\ell$. With high probability, there exists a contour in $\Dep(\sigma_{k, k+\ell})$ that blocks directed vertical crossings of the rectangle $[0,2^{-k}) \times [0,2^{-k}L)$.
We define $\widetilde{H}^{*}_1 = 2^{-k}L$ in this case.

We next move to columns $\cH_2$ and $\cH_3$, placing similar contours inside the rectangles
\begin{align*}
    (2^{-k}, \widetilde{H}^{*}_1) + [0,2^{-k})\times [0,2^{-k}L) \qquad \text{and} \qquad 
    (2^{-k +1}, \widetilde{H}^{*}_1 + 2^{-k}L) + [0,2^{-k})\times [0,2^{-k}L).
\end{align*}
Note that we do not test $2^{-k}\times 2^{-k}$ boxes in $\cH_2$ or $\cH_3$.
We then define
\[
    \widetilde{H}^{*}_2 = \widetilde{H}^{*}_1 + 2^{-k}L \quad \text{and} \quad \widetilde{H}^{*}_3 = \widetilde{H}^{*}_1 + 2 \cdot 2^{-k}L.
\]

We now move to the next three columns $\cH_4,\cH_5,\cH_6$ and repeat the above procedure.
Namely, we first test the $2^{-k}\times 2^{-k}$ box in $\cH_4$ whose bottom edge has height $\wt{H}^*_3$, and define $\wt{H}^*_4$ analogously to $\wt{H}^*_1$.
We then place contours in $\cH_5,\cH_6$ like we did in $\cH_2,\cH_3$, and define $\wt{H}^*_5,\wt{H}^*_6$ accordingly.

We continue inductively, repeating the above in every three consecutive columns, to obtain a contour that blocks directed crossings.
The key point is that after testing a box (i.e. revealing its status in $\Dep(\omega_k)$), we skip two columns before performing the next test.
Since the presence of contours in these two columns only depends on $\Dep(\sigma_{k,k+\ell})$, this ensures that the random events under consideration are independent.
Consequently, the sequence $\{\widetilde{H}^{*}_{3i - 2}\}_{i \geq 1}$ has i.i.d.\ increments.

To analyze the growth of height, note that
\begin{align*}
    \mathbb{E}[\widetilde{H}^{*}_3]
    &= \P\left([0,2^{-k})^2 \text{ is }\Dep(\omega_k)\text{-closed}\right)
    \cdot 2\cdot 2^{-k}L
    +
    \P\left([0,2^{-k})^2 \text{ is }\Dep(\omega_k)\text{-open}\right)\cdot 3\cdot 2^{-k}L\\
    &= (1-p)^5\cdot 2\cdot 2^{-k}L
    + (1 - (1-p)^5)\cdot 3\cdot 2^{-k}L\\
    &= (3 - (1-p)^5) \cdot 2^{-k}L.
\end{align*}
Thus, by the law of large numbers, the probability of a vertical directed crossing in the rectangle $[0,1) \times [0, \widetilde{L})$ vanishes as $k \to \infty$ whenever
\begin{align*}
    \widetilde{L} > \frac{3 - (1-p)^5}{3} \cdot L.
\end{align*}
We now iterate this argument with $\wt{L}$ in place of $L$ to obtain the same conclusion for all $\wt{L} > (\frac{3-(1-p)^5}{3})^m \cdot L$, for any $m\ge 1$. 
Since $\frac{3 - (1-p)^5}{3} < 1$, we can make this threshold less than $\frac12$, which completes the proof of Lemma~\ref{l:easycross-dep}.
\end{proof}

\section{Random walk return probability in high dimensions}\label{a:return-prob}

This appendix is devoted to proving the following lemma stating that as dimension $d\to\infty$, the probability of a certain random walk in $\Z^d$ returning to the origin tends to $0$. 
In the case of simple random walk, this is quite classical and can be proven in several ways (for instance, see \cite[(5.4)]{MonRandomWalksMultidimensional1956} for an asymptotic $1/d$-expansion of the SRW return probability). The walk that we consider here is slightly non-standard and we provide a Fourier-theoretic argument for completeness, although it is not too difficult to develop a probabilistic proof by comparing to the simple random walk. 
\begin{lemma}\label{l:return-probability}
    Let $X=\{X(t)\}_{t\ge 0}$ be a random walk in $\Z^d$ started at the origin $\0$, with increments $X(t+1)-X(t)$ uniformly distributed in $\{\ee_i-\ee_j : i,j\in\lb 1,d\rb\}$.
    Let
    \begin{align*}
        q_d := \bfP(\textup{there exists $t\ge 1$ such that $X(t)=\0$}).
    \end{align*}
    Then
    \begin{align*}
        \lim_{d\to\infty} q_d
        =0.
    \end{align*}
\end{lemma}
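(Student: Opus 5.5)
We will follow the Fourier route the appendix advertises. The step distribution $\mu$ is uniform on the $d^2$ vectors $\ee_i-\ee_j$, including the $d$ copies of $\0$ for $i=j$. Its characteristic function is
\[
    \phi(\theta)=\frac{1}{d^2}\sum_{i,j}e^{\mathrm{i}(\theta_i-\theta_j)}=\Bigl|\frac1d\sum_{j=1}^d e^{\mathrm{i}\theta_j}\Bigr|^2=:|\psi(\theta)|^2 ,
\]
which is real and lies in $[0,1]$. Let $G:=\sum_{t\ge0}\bfP(X(t)=\0)$ be the expected number of visits to the origin. By the strong Markov property, $G=1/(1-q_d)$. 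It therefore suffices to show $G\to1$, i.e. $\sum_{t\ge1}\bfP(X(t)=\0)\to0$ as $d\to\infty$.

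Fourier inversion gives $\bfP(X(t)=\0)=(2\pi)^{-d}\int_{[-\pi,\pi]^d}|\psi(\theta)|^{2t}\,\mathrm{d}\theta=\bfE\bigl[|\psi(\Theta)|^{2t}\bigr]$, where $\Theta$ is uniform on the torus. Under this law, $S:=d\,\psi(\Theta)=\sum_j e^{\mathrm{i}\Theta_j}$ is a sum of $d$ i.i.d. uniform unit complex numbers. Hence
\[
    \bfP(X(t)=\0)=d^{-2t}\,\bfE|S|^{2t}=d^{-2t}\,W_d(t),
\]
where $W_d(t)$ is the number of solutions to $\sum_{m\le t}\ee_{a_m}=\sum_{m\le t}\ee_{b_m}$. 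These are the $2t$-step closed walks, i.e. pairs of index words $(a,b)\in\lb1,d\rb^{2t}$ whose multisets coincide. This is equivalent to counting returns directly: the walk returns to $\0$ at time $t$ exactly when the multisets of the "plus" indices $\{i_m\}$ and the "minus" indices $\{j_m\}$ agree.

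The counting bound is then immediate: given $a$, the word $b$ must be a permutation of $a$, so $W_d(t)\le d^t\,t!$. This gives $\bfP(X(t)=\0)\le t!/d^t$. It is useful for $t\le \sqrt d$, say, where $\sum_{1\le t\le \sqrt d}t!/d^t\le \sum_{t\ge1}(t/d)^t\le 2/d$, using $t!\le t^t$. It also gives the correct leading term $\bfP(X(1)=\0)=1/d$.

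For large $t$ we need a separate tail estimate, which is the main obstacle. There we use a local CLT-type bound. A lossy but sufficient approach: for $t\ge\sqrt d$, we have $\bfP(X(t)=\0)\le \bfE[e^{-2t(1-|\psi|)}]$ after using $|\psi|^{2t}\le e^{-2t(1-|\psi|^2)/2}$. We then estimate $\bfP(|S|^2\ge (1-\delta)d^2)$ by large deviations for the i.i.d. bounded variables $\cos\Theta_j,\sin\Theta_j$. A cleaner alternative is to avoid Fourier analysis here: couple $X$ with its coordinates and note that after $t$ steps a fixed coordinate has been touched about $2t/d$ times. If we condition on which indices appear, the return event forces each index count among the $i$'s to equal its count among the $j$'s. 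These $d$ constraints are approximately independent events, each of probability $O((t/d)^{-1/2})$ when $t\gtrsim d$, and an entropy bound gives $\bfP(X(t)=\0)\le (Ct/d)^{-cd}$ or so. Such a bound is summable in $t$, with total over $t\ge\sqrt d$ tending to $0$ as $d\to\infty$ (the range $\sqrt d\le t\le d$ being handled by the factorial bound refined as $t!/d^t\le (t/d)^t e^{-t+O(\log t)}$, which is tiny). Concretely, I would first try to prove uniformly that $\bfE|S|^{2t}\le C_0\, t!\,d^t\cdot\min(1,(C d/t)^{d/2-1})$ via a Gaussian comparison of $S/\sqrt d$. Combining the two regimes gives $\sum_{t\ge1}\bfP(X(t)=\0)=O(1/d)$, hence $q_d\to0$.
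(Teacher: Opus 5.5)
Your reduction to $\sum_{t\ge 0}\bfP(X(t)=\0)=1/(1-q_d)$ is fine, and your small-$t$ treatment is correct and differs from the paper's. The identity $\bfP(X(t)=\0)=d^{-2t}W_d(t)$ and the count $W_d(t)\le d^t\,t!$, combined with $t!\le e\sqrt t\,(t/e)^t$, bound the contribution of all $t\le 2d$ by $O(1/d)$.

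\textbf{The gap.} The whole difficulty of the lemma is the regime $t\gtrsim d$, and there nothing is actually proved.
\begin{itemize}
\item \emph{Fourier sketch.} Concentration for the bounded variables $\cos\Theta_j,\sin\Theta_j$ only gives $\bfP(\phi\ge 1-\delta)\le Ce^{-cd}$, uniformly in $\delta$ and with no decay as $\delta\to 0$. Splitting therefore yields $\bfE[\phi^t]\le e^{-t\delta}+Ce^{-cd}$, which is not summable in $t$. Since $\sum_{t\ge T}\bfE[\phi^t]=\bfE[\phi^T/(1-\phi)]$, you need integrability of $1/(1-\phi)$ near the diagonal $\{\theta_1=\cdots=\theta_d\}$. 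That requires a small-ball bound for $1-\phi$ decaying like a high power of $\varepsilon$.
\item \emph{The ``cleaner alternative''.} Approximately independent constraints plus an entropy bound ``or so'' is a heuristic, not an argument.
\item \emph{The ``concrete'' target.} The bound $\bfE|S|^{2t}\le C_0\,t!\,d^t\min(1,(Cd/t)^{d/2-1})$ is mis-normalized. It only gives $\bfP(X(t)=\0)\le C_0\frac{t!}{d^t}(Cd/t)^{d/2-1}$, which tends to infinity as $t\to\infty$ because $t!/d^t\approx (t/(ed))^t$. The Gaussian moments $t!$ are relevant only for $t\ll d$. For $t\gg d$ the cap $|S|\le d$ dominates, and what you need is local-limit-type decay, e.g.\ $\bfE|S|^{2t}\le d^{2t}(Cd/t)^{cd}$.
\end{itemize}

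\textbf{How the paper fills it.} It writes $q_d/(1-q_d)=E[\phi/(1-\phi)]$ and splits into three regions.
\begin{itemize}
\item On $\{1-\phi>\tfrac12\}$ the contribution is at most $2E[\phi]=2/d$.
\item On $\{\delta\le 1-\phi\le\tfrac12\}$ it is at most $C\delta^{-1}e^{-cd}$, by Hoeffding.
\item On $\{1-\phi<\delta\}$ it uses the deterministic bound $1-\phi\ge d^{-2}\max_{i<d}\bigl(1-\cos(\theta_i-\theta_d)\bigr)$ and the independence of the $\theta_i-\theta_d$. This gives $P(1-\phi<\varepsilon)\le (C\varepsilon^{1/2}d)^{d-1}$, and a dyadic sum then bounds the contribution by $(Cd)^{d-1}\delta^{(d-3)/2}$.
\end{itemize}
Taking $\delta=d^{-100}$ finishes the proof.

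\textbf{A fix within your framework.} Your ``multisets agree'' viewpoint can also be made rigorous. One has $\bfP(X(t)=\0)=\sum_m p_m^2\le\max_m p_m$, where $p_m$ are the multinomial$(t;\tfrac1d,\dots,\tfrac1d)$ probabilities. Poissonization gives $\max_m p_m\le e\sqrt t\,\bigl(\max_k\bfP(\mathrm{Poi}(t/d)=k)\bigr)^d$. For $t\ge d$ this is at most $e\sqrt t\,(d/(\pi t))^{d/2}$, and the sum over $t\ge d$ is $O(\sqrt d\,\pi^{-d/2})$. Either estimate would close the gap; as written, the proposal does not.
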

\begin{proof}
    We will actually prove that $q_d = O(\frac{1}{d})$, which is sharp since $\bfP(X(1)=\0) = \frac{1}{d}$.
    Let $N$ be the number of times $X$ visits $\0$ besides $t=0$, so $N$ is Geometric with failure probability $q_d$.
    Writing $N = \sum_{t=1}^\infty \1_{X(t)=\0}$ and taking expectations, we get
    \begin{align}\label{e:green-0}
        \frac{q_d}{1-q_d}
        &= \sum_{t=1}^\infty \bfP(X(t)=\0).
    \end{align}
    Each increment $X(t+1)-X(t)$  has characteristic function
    \begin{align*}
        \phi_d(\theta) := \bfE\left[
            e^{\mrm{i} (X(t+1)-X(t)) \cdot \theta}
        \right]
        &=\frac{1}{d^2}\sum_{(i,j)\in\lb 1,d\rb^2} e^{\mrm{i}(\theta_i-\theta_j)}\\
        &= \left|\frac{1}{d}\sum_{j=1}^d e^{\mrm{i}\theta_j}\right|^2,
    \end{align*}
    where $\mrm{i}=\sqrt{-1}$.
    By Fourier inversion, we have
    \begin{align*}
        \bfP(X(t) = \0) &= \frac{1}{(2\pi)^d}\int_{[-\pi,\pi]^d} \phi_d(\theta)^t \,\mrm{d}\theta.
    \end{align*}
    Applying this in \eqref{e:green-0} along with Fubini (note that $\phi_d(\theta)\ge 0$) yields
    \begin{align*}
        \frac{q_d}{1-q_d}
        &=
        \frac{1}{(2\pi)^d}\int_{[-\pi,\pi]^d} 
            \sum_{t=1}^\infty 
            \phi_d(\theta)^t
        \,\mrm{d}\theta\\
        &= \frac{1}{(2\pi)^d} 
        \int_{[-\pi,\pi]^d}
        \frac{\phi_d(\theta)}{1-\phi_d(\theta)}\,\mrm{d}\theta,
    \end{align*}
    where we used that, by Cauchy--Schwarz, $\phi_d(\theta)< 1$ except on the diagonal $\{\theta_1=\cdots=\theta_d\}$ which has measure zero.
    It remains to show that the RHS of the above display is $O(\frac{1}{d})$.

    To simplify notation, let $P$ be the law of $\theta \sim \mathrm{Uniform}([-\pi,\pi]^d)$, and let $E$ be expectation with respect to $P$.
    Then the integral above is $E\left[\frac{\phi_d}{1-\phi_d}\right]$.
    We fix $\d\in(0,\frac12)$, to be specified later, and decompose the expectation as follows:
    \begin{align}\label{e:green-function}
        E\left[
            \frac{\phi_d}{1-\phi_d}
        \right]
        &= 
        E\left[
            \frac{\phi_d}{1-\phi_d}
            \1_{1-\phi_d > \frac12}
        \right]
        + E\left[
            \frac{\phi_d}{1-\phi_d}
            \1_{\d \le 1-\phi_d \le \frac12}
        \right]
        +
        E\left[
            \frac{\phi_d}{1-\phi_d}
            \1_{1-\phi_d < \d}
        \right]\nonumber\\
        &\le
        E\left[
            \frac{\phi_d}{1-\phi_d}
            \1_{1-\phi_d > \frac12}
        \right]
        + 
        E\left[
            \frac{1}{1-\phi_d}
            \1_{\d \le 1-\phi_d \le \frac12}
        \right]
        +
        E\left[
            \frac{1}{1-\phi_d}
            \1_{1-\phi_d < \d}
        \right],
    \end{align}
    because $0 \le \phi_d \le 1$ pointwise.

    For the first term in \eqref{e:green-function}, using that $\phi_d \ge 0$, we get
    \begin{align}\label{e:green-1}
        E\left[
            \frac{\phi_d}{1-\phi_d}
            \1_{1-\phi_d > \frac12}
        \right]
        \le
        2 E[\phi_d]
        = \frac{2}{d},
    \end{align}
    where the fact that $E[\phi_d]=\frac{1}{d}$ can be deduced from the identity
    \begin{align}\label{e:phi-identity}
        \phi_d(\theta) 
        &= 
        \frac{1}{d^2}\sum_{i,j\in\lb 1,d\rb} \cos(\theta_i-\theta_j).
    \end{align}

    For the second term in \eqref{e:green-function}, we write
    \begin{align*}
        \phi_d(\theta) = 
        \left(\frac{1}{d}
        \sum_{i=1}^d \cos\theta_i\right)^2
        + \left(\frac{1}{d}
        \sum_{i=1}^d \sin\theta_i\right)^2.
    \end{align*} 
    By a union bound and Hoeffding's inequality, 
    \begin{align*}
        P\left(
            1-\phi_d \le \frac{1}{2}
        \right)
        &= P\left(\phi_d \ge \frac{1}{2}\right)\\
        &\le
        2P\left(
            \left(\frac{1}{d}
            \sum_{i=1}^d \cos\theta_i\right)^2
            \ge \frac{1}{4}
        \right)=
        2P\left(
            \left|\frac{1}{d}\sum_{i=1}^d
            \cos \theta_i\right|
            \ge \frac{1}{2}
        \right)
        \le C e^{-cd}.
    \end{align*}
    Therefore, the second term in \eqref{e:green-function} is bounded by
    \begin{align}\label{e:green-2}
        E\left[
            \frac{1}{1-\phi_d}
            \1_{\d \le 1-\phi_d \le \frac12}
        \right]
        &\le \frac{C}{\d} e^{-cd}.
    \end{align}

    Finally we bound the third term in \eqref{e:green-function}.
    By \eqref{e:phi-identity}, we have
    \begin{align*}
        1 - \phi_d(\theta) = \frac{2}{d^2} 
        \sum_{1 \le i < j \le d} (1 - \cos(\theta_i - \theta_j)).
    \end{align*}
    Let $Y_{i,j} := 1 - \cos(\theta_i-\theta_j)$.
    Since $Y_{i,j} \ls (\theta_i-\theta_j)^2$, we have the lower tail bound
    \begin{align*}
        P\left(
            Y_{i,j}
            < \e
        \right)
        \le C \sqrt{\e}
        \qquad\text{for $\e>0$.}
    \end{align*}
    Since $Y_{i,j}\ge 0$, we have the crude bound $1-\phi_d \ge \frac{1}{d^2}\max_{1\le i\le d-1} Y_{i,d}$.
    Since $Y_{1,d},Y_{2,d},\dots,Y_{d-1,d}$ are i.i.d., we get
    \begin{align*}
        P\left(
            1-\phi_d
             < \e
        \right)
        &\le
        P\left(
            \max_{1\le i \le d-1} Y_{i,d} < \e d^2
        \right)\\
        &\le (C\e^{1/2} d)^{d-1}.
    \end{align*}
    We now bound the third term in \eqref{e:green-function} using a dyadic decomposition:
    \begin{align*}
        E\left[
            \frac{1}{1-\phi_d}
            \1_{1-\phi_d < \d}
        \right]
        &\le \sum_{k=0}^\infty
        \frac{2^{k+1}}{\d}
        P\left(
            \frac{\d}{2^{k+1}} \le 1-\phi_d < \frac{\d}{2^k}
        \right)\\
        &\ls
        \sum_{k=0}^\infty 
        \frac{2^{k}}{\d}
        \left(C \frac{\d^{1/2}}{2^{k/2}} d\right)^{d-1} \\
        &= (Cd)^{d-1} \d^{(d-3)/2}
        \sum_{k=0}^\infty \frac{1}{2^{(d-3)k/2}}\\
        &\ls(Cd)^{d-1} \d^{(d-3)/2},
    \end{align*}
    where the last inequality is valid for $d \ge 4$.

    Combining the above with \eqref{e:green-1}, \eqref{e:green-2}, \eqref{e:green-function},
    we get that for all $d\ge 4$ and $\d\in(0,1/2)$,
    \begin{align*}
        E\left[
            \frac{\phi_d}{1-\phi_d}
        \right]
        &\le \frac{2}{d}
        + \frac{C}{\d}e^{-cd}
        + (C d)^{d-1} \d^{(d-3)/2}.
    \end{align*}
    Plug in $\d = d^{-100}$, so that $d^{d-1} \d^{(d-3)/2} = d^{-49d + 149} < d^{-d}$ for all $d \ge 4$.
    We get
    \begin{align*}
        E\left[
            \frac{\phi_d}{1-\phi_d}
        \right]
        &\le 
        \frac{2}{d}
        + C d^{100} e^{-cd}
        + \frac{C^{d-1}}{d^d}\\
        &\ls \frac{1}{d}.
    \end{align*}
    The LHS equals $\frac{q_d}{1-q_d}$, so we get $q_d \ls \frac{1}{d}$
    as desired.
\end{proof}

\emergencystretch=1em
\printbibliography


\end{document}